\algnewcommand{\IfThenElse}[3]{% \IfThenElse{<if>}{<then>}{<else>}
  \State \algorithmicif\ #1\ \algorithmicthen\ #2\ \algorithmicelse\ #3}
\def\R{\mathbb{R}}
\def\B{\mathcal{B}}
\def\T{\mathcal{T}}
\def\J{\mathcal{J}}
\def\I{\mathcal{I}}
\def\K{\mathcal{K}}
\def\S{\mathcal{S}}
\newcommand{\dsum}{\displaystyle\sum}
\newtheorem{theorem}{Theorem}
\newtheorem{prop}[theorem]{Proposition}
\newtheorem{example}{Example}
\let\origmaketitle\maketitle
\def\maketitle{
	\begingroup
	\def\uppercasenonmath##1{} % this disables uppercasing title
	\let\MakeUppercase\relax % this disables uppercasing authors
	\origmaketitle
	\endgroup
}
\begin{document}
	
	\title[]{\Large The Cooperative Maximal Covering Location Problem with \\ ordered partial attractions}
	
\author[C. Domínguez, R. G\'azquez, J.M. Morales,  \MakeLowercase{and} S. Pineda]{
{\large Concepci\'on Dom\'inguez$^\dagger$, Ricardo G\'azquez$^\ddagger$, Juan Miguel Morales$^{\dagger\dagger}$, and Salvador Pineda$^{\ddagger\ddagger}$}\medskip\\
$^\dagger$Department of Statistics and Operational Research, Universidad de Murcia, Murcia, Spain.\\
$^\ddagger$Department of Statistics, Universidad Carlos III de Madrid, Madrid, Spain.\\
$^{\dagger\dagger}$Department of Mathematical Analysis, Statistics and Operations Research \& Applied Mathematics, Universidad de Málaga, Málaga, Spain .\\
$^{\ddagger\ddagger}$Department of Electrical Engineering, Universidad de Málaga, Málaga, Spain.\\
\texttt{concepcion.dominguez@um.es}, \texttt{ricardo.gazquez@uc3m.es}, \texttt{juan.morales@uma.es}, \texttt{spineda@uma.es}
}
	
\date{\today}

\maketitle 	

\begin{abstract}
The Maximal Covering Location Problem (MCLP) is a classical location problem where a company maximizes the demand covered by placing a given number of facilities, and each demand node is covered if the closest facility is within a predetermined radius. In the cooperative version of the problem (CMCLP), it is assumed that the facilities of the decision maker act cooperatively to increase the customers’ attraction towards the company. In this sense, a demand node is covered if the aggregated partial attractions (or partial coverings) of open facilities exceed a threshold. In this work, we generalize the CMCLP introducing an Ordered Median function (OMf), a function that assigns importance weights to the sorted partial attractions of each customer and then aggregates the weighted attractions to provide the total level of attraction. We name this problem the \emph{Ordered Cooperative Maximum Covering Location Problem} (OCMCLP). The OMf serves as a means to compute the total attraction of each customer to the company as an aggregation of ordered partial attractions and constitutes a unifying framework for CMCLP models.

We introduce a multiperiod stochastic non-linear formulation for the CMCLP with an embedded assignment problem characterizing the ordered cooperative covering. For this model, two exact solution approaches are presented: a MILP reformulation with valid inequalities and an effective approach based on Generalized Benders' cuts. Extensive computational experiments are provided to test our results with randomly generated data and the problem is illustrated with a case study of locating charging stations for electric vehicles in the city of Trois-Rivières, Québec (Canada).
\end{abstract}
		
\keywords{Maximal Covering; Cooperative Cover; Facility Location; Ordered Median Function; Mixed-integer Optimization; Benders Decomposition.}

\section{Introduction}
Covering problems constitute a fruitful research area within the facility location field. A demand point or customer class is covered if a facility is installed within a fixed covering radius. The aim is thus to maximize the covered demand minimizing the number of facilities installed. Two related problems arise: the Maximal Covering Location Problem (MCLP), introduced by \cite{church1974}, where the objective is to maximize the demand covered with a limited number of facilities; and the Set Covering Location Problem (SCLP) introduced by \cite{toregas1971}, that minimizes the number of facilities needed to cover all the demand. A third problem, the $p$-center location problem, arises when the radius is not predetermined, but rather a variable of the model.

In these classical problems, the covering is determined by one facility, namely the closest to the demand node. This is known as \textit{individual} covering. However, it is not hard to think of general settings in which the cooperation of facilities to provide covering is more suitable. For instance, in emergency response facility location \citep[see][]{hogan1986, daskin1988, li2011} some backup installations for emergency services are necessary to consider a demand node as covered. In retailing applications, the  distance and number of facilities installed is crucial to determine the level of covering of a customer class (in this setting, the cooperation is frequently seen as covering a fraction of the customer's demand). Since the facilities cooperate to provide coverage, these generalizations fall in the category of \textit{cooperative covering}. This covering has been studied from different points of view, depending on the assumptions made on the facilities and the customers, leading to a wide range of problems that fit many different applications.

The cooperative covering in the plane was formally introduced by \cite{berman2009}. The authors consider the setting where each facility emits a (physical or non-physical) \emph{signal} that decays over the distance, and each demand node is covered if its total (aggregated) signal exceeds a predefined threshold. For physical signals (such as light, sound, microwave signals, phone coverage), in many cases the signal emitted by the source dissipates proportionally to the square of the travel distance. They give the cooperative counterparts of all three of the classical models previously stated, suggesting optimal algorithms for the problem with two facilities and heuristics for more than two facilities. Their solutions are illustrated with a case study of locating warning sirens in North Orange County, California. Other applications described for physical signals are the location of cell phone towers, light towers to provide adequate lighting to an area, and the placement of outdoor gas heaters in cafe’s or restaurant's terraces. 

The cooperative cover framework for the discrete location case (where the set of potential facility locations is discrete) is analyzed in \cite{berman2011}. Again, extensions for the three classical covering problems are introduced, resulting in the Cooperative Maximal Covering Location Problem (CMCLP), the Cooperative Set Covering Location Problem and the Generalized Cooperative $p$-center. The assumptions made on the covering mechanism are as in \cite{berman2009}, and both exact and algorithmic approaches are proposed.

\cite{averbakh2014} study the CMCLP on a network, allowing the facilities to be located at both the nodes and along the edges. This setting is more appropriate for applications with non-physical signals, because they do not generally propagate along the straight-line distance. Applications include customers that need to be served by a facility in a service time in the retail and take-out food, and they consider a customer to be covered if the probability that there is a facility that can serve it in time is sufficiently high. For the CMCLP with two facilities and a linear signal strength function, they present an $O(m^2n^2)$ exact algorithm (for $n$ nodes and $m$ edges). For the general case, they propose a mixed-integer programming (MIP) formulation that can be used for small instances, and greedy-type and local search-based interchange heuristics for large instances. More recently, \cite{li2018} propose a cooperative model for humanitarian relief management, \cite{liu2022} consider cooperative covering under uncertain demand, and \citet{baldassarre2024two} propose a cooperative covering problem which was applied to the bank sector.

The subject of this article is a generalization of the CMCLP in the discrete setting that constitutes a unified modeling framework for the non-cooperative and a wide range of cooperative settings. To do so, we extend the definition of cooperative covering using an ordered median function (OMf). Thus, the total attraction of a demand node towards the company is obtained by (1) ordering the partial attractions provided by the open facilities from largest to smallest, and (2) summing up the partial attractions weighted by a vector of importance weights $\lambda$. Finally, the node is covered if the total attraction exceeds a threshold. The OMf is a very general function that has as particular cases the classical MCLP (considering $\lambda_1=1$ and $\lambda_j=0$ for the last components, the covering is given by the most attractive facility, where the indices $j$ refer to the ordered facilities) or the standard cooperative setting given by the aggregated sum of partial attractions (by choosing weights $\lambda_j = 1$ for all $j$), so it generalizes the model proposed in \cite{berman2011}. Finally, it extends models like the one introduced in \cite{lin2021} in the context of the Maximal Capture Location Problem with discrete choice rule (where the demand of the customer is entirely fulfilled by the company that provides the highest coverage). In this work, the customers form a \emph{consideration set} containing the $\ell$ most attractive open facilities to determine the total coverage. This consideration set can be modeled by choosing $\lambda_j=1$ for the first $\ell$ components and $\lambda_j=0$ for the rest. We name the problem the Ordered Cooperative Maximal Covering Location Problem (OCMCLP), and, to the best of our knowledge, no ordered cooperative coverings have been addressed in the literature so far.

In the context of Location Theory with the OMf, there is a vast literature under the umbrella of the Ordered Median Problem \citep{puertofdez94, Puerto2019}, since the most used objective functions (e.g., median, center, $k$-centrum or centian) can be covered by OMfs. Thus, it has been successfully applied to a wide range of areas such as hub location problems \citep{puerto2011single,puerto2016ordered} or $p$-median problems both discrete \citep{deleplanque2020branch,marin2020fresh} and continuous \citep{BEP16,blanco2023branch}. In fact, this generalization has already been tackled in the context of a related type of covering location problem: the gradual covering. In gradual covering, the \emph{all-or-nothing coverage} assumption is replaced with a general coverage function, generally non-increasing with distance, which represents the proportion of demand covered by the facility. For instance, two radii $r_\ell < r_u$ can be considered so that: (1) customers who are closer than $r_\ell$ to their closest facility are fully covered, (2) customers who are further away than $r_u$ to their closest facility are not covered, and (3) customers with travel distance in between $r_\ell$ and $r_u$ are partially covered. This problem has been extensively studied, sometimes under the names of partial covering or generalized covering \citep{berman2002, berman2003,drezner2010}. The Ordered Gradual Covering Location Problem (OGCLP) combines the characteristics of ordered median and gradual cover and was introduced in \citet{berman2009b}. Note that the gradual cover is not cooperative, since it is determined solely by the closest facility, and that in the OGCLP the \emph{customers} are ordered according to their levels of covering, whereas in our setting, the partial attractions provided by the \emph{facilities} are ordered for each customer to form their total cooperative coverage. This translates to the OMf being part of the objective function in the OGCLP, whereas in our problem, the OCMCLP, the addition of the OMf leads to an embedded assignment problem. To further emphasize the distinction between gradual and cooperative covering, we illustrate the gradual, cooperative and gradual cooperative coverings in Figure \ref{fig:example_coop} and refer the reader to \cite{berman2010}. Recent examples of problems with gradual cooperative covering are given in \citet{karatas2017,bagherinejad2018, karatas2021, usefi2023}. Other types of covering problems have also been generalized using the OMf \citep{BP21,blanco2022fairness}.

\begin{figure}
    \centering
    \begin{subfigure}[b]{.32\linewidth}
    \centering
    \fbox{\includegraphics[scale = 0.18]{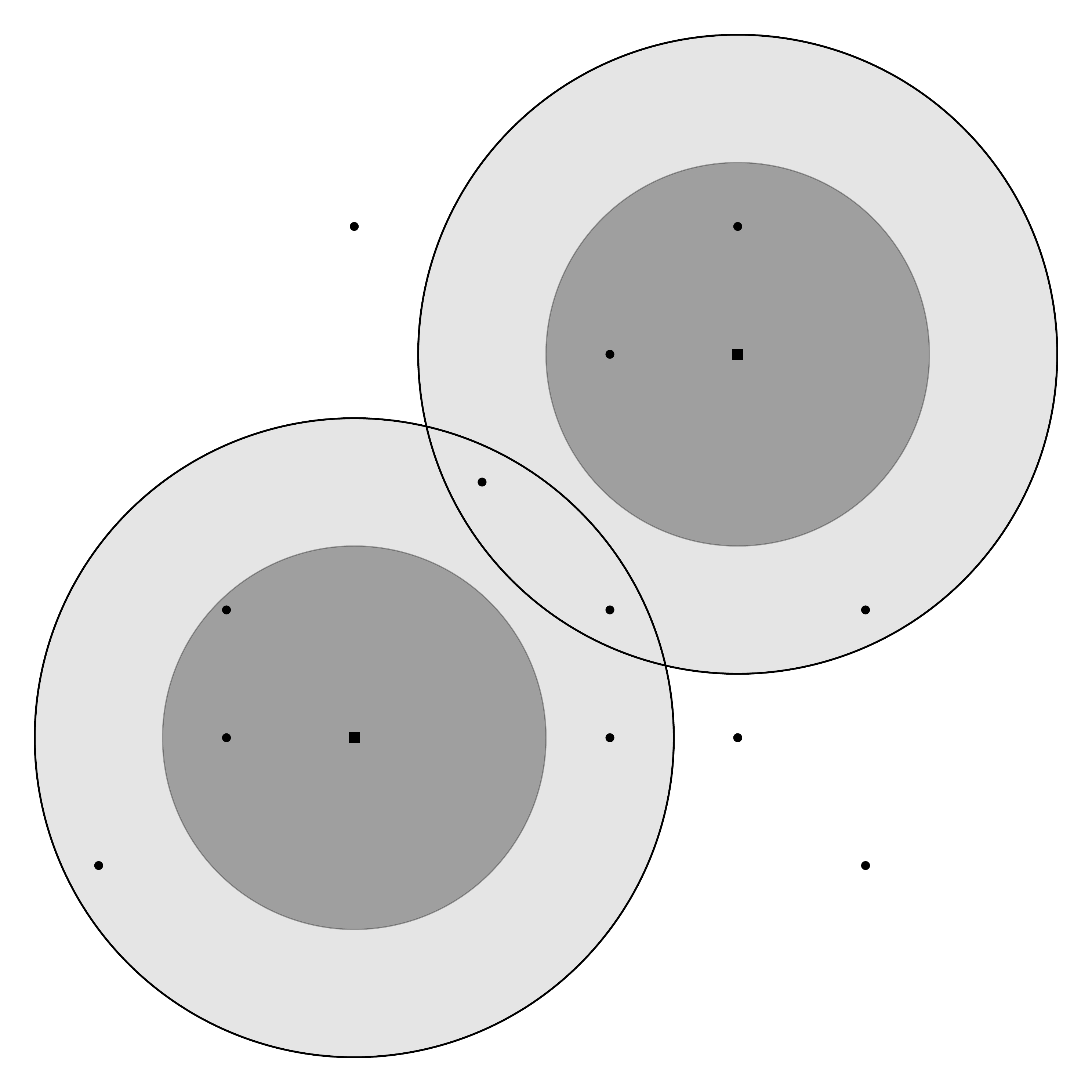}}
    \caption{Gradual covering.}\label{fig:ex1gradcov}
    \end{subfigure}~
    \begin{subfigure}[b]{.32\linewidth}
    \centering 
    \fbox{\includegraphics[scale = 0.18]{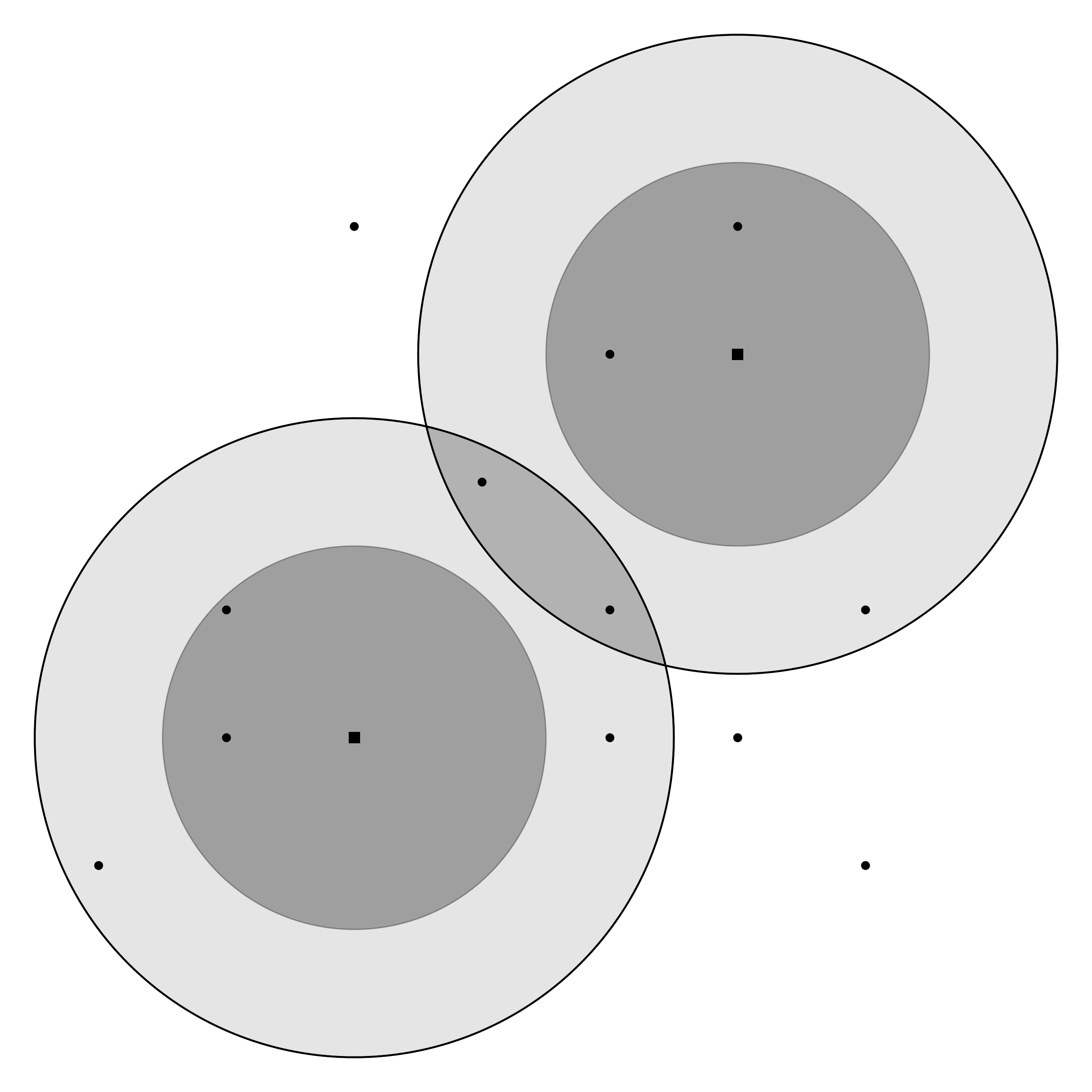}}
    \caption{Gradual cooperative covering.}\label{fig:ex1coopgradcov}
    \end{subfigure}~
    \begin{subfigure}[b]{.32\linewidth}
    \centering 
    \fbox{\includegraphics[scale = 0.18]{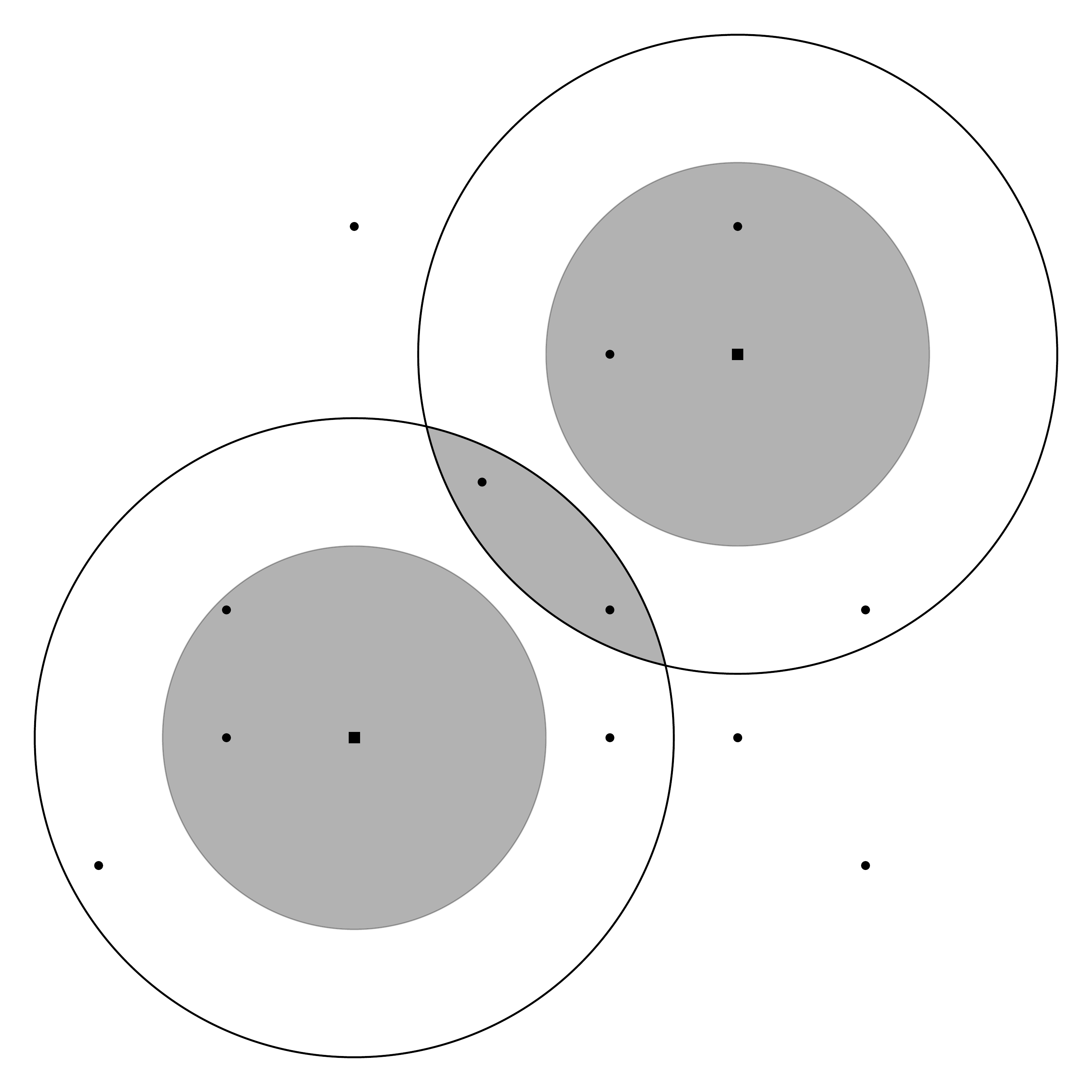}}
    \caption{Cooperative covering.}\label{fig:coopcov}
    \end{subfigure}
    \caption{Illustrative example of gradual and cooperative coverings. Circles denote clients, and squares the two located facilities. The clients located in the white area are not covered, the light gray area corresponds to partially covered, and the dark gray area includes fully covered clients.}
    \label{fig:example_coop}
\end{figure}

The combination of the discrete location and the cooperative (non-gradual) setting suits nearly all applications in the physical signal context, as well as many applications in the non-physical signal category. In the latter, \cite{berman2009} point out that the signal can be seen as an attraction of the facility (i.e.\ a \emph{utility} associated to the value of a given alternative), so we follow their approach and consider partial attractions of the customer towards each facility. In their setting, this attraction depends on the distance. In our case, the attraction is generalized and depends on features such as the distance to the facility, its size or its location near relevant establishments. Because it also relies on the perception of the customers, we consider this partial attractions as stochastic. 
Therefore, if the decision-maker places two facilities, it makes sense to assume that a customer's attraction to a particular facility may decrease if other establishments are installed closer, thus the OMf with non-increasing weights perfectly models this setting. Since gradual covering is not considered, the applications in retail are appropriate when the use of the facilities of a company requires a previous affiliation or a specific product, making it impractical or impossible to patronize facilities from different companies. For instance, an application from the energy sector is the maximization of electric vehicle adoption through the location of charging stations. Following the recent literature, the customers' total level of attraction increases when more electric vehicle charging stations are placed, as it is assumed to be highly dependent on the placement of charging infrastructure \citep{coffman2017}. Moreover, customers with an electric vehicle no longer patronize petrol stations, hence they are totally covered by the electric vehicle charging stations. This problem is of great relevance nowadays, since the European Commission has agreed on an ambitious new law to deploy sufficient alternative fuels infrastructure \citep{noauthor_new_2023} and to ban the sale of new combustion-engine cars in the bloc by 2035 \citep{noauthor_eu_2023}, as part of the European Green Deal towards zero emissions. In this work, we test the performance of our models using a case study on the adoption of electric vehicles through the installation of charging stations in the city of Trois-Rivières, Québec (Canada) \citep{anjos2022arxiv}.

Many other applications arise from companies or public services that offer a subscription/membership for a fee in exchange for the use of any available facility over a period of time. This is the case of bicycle services or the public transport network offered in cities to encourage the use of more sustainable systems, and the business model of fitness chains where a gym membership allows the use of any fitness center of the company, and the memberships make impractical patronizing facilities from different companies. It is also suitable for backup models for emergency response like the aforementioned, and for the modeling of congested systems \citep{berman2002,baron2009}. 

% The key idea is to express the demand in terms of the utility function (instead of the probability expressions of DCM), and to rely on simulation to overcome the stochastic nature of the associated random component.

As stated, there is uncertainty in the partial attractions of the customers towards the facilities. This implies that when the location decision is taken by the decision maker, the covering is uncertain, since it depends on the particular attractions of the customers to each facility and the threshold of the covering (that represents the minimum level of attraction required to cover a customer). The probability functions of choice models that govern customers' attractions rely on behavioral aspects, which more often than not lead to probability distributions that require complex (i.e., non linear, non convex) mathematical formulations, or that are frequently unknown. To overcome this issue, in a similar spirit to \cite{li2018}, we propose to replace the probability distribution of the random attraction model with its empirical estimate based on a set of random samples. This simulation-based approach is known as sample average approximation (SAA)  \citep{shapiro2003} and has been applied in discrete choice models to specify the demand directly in terms of the covering functions \citep{pachecopaneque2021}. As an advantage, since we do not make any assumptions on the random attraction model, this approach allows to work with observations that are available to the decision maker even when the distribution is unknown. The basic idea of the SAA approach is to generate a sample of customers' partial attractions to then approximate the unknown attraction by the corresponding sample average. Thus, for each scenario the error terms are generated in advance and introduced in the formulation as input for each partial attraction. The resulting model is therefore deterministic, a mixed-integer problem (MIP) where a set of scenarios are included and the customers' attractions vary in each scenario. Customers are covered in the corresponding scenario if the total level of attraction surpasses the (also unknown) threshold, and the objective function is the average coverage in all the scenarios.

%A commonly used approach to cope with these obstacles is to replace $\mathbb{P}$ with its empirical estimate based on a set of random samples (\cite{shapiro2003}). This approach is known in the technical literature as Sample Average Approximation (SAA) and may also be seen as a data-driven approach that works with observations of $\omega$ that are available to the decision-maker even if the true data-generating distribution $\mathbb{P}$ is unknown. The main advantage of  SAA  is that it allows to reformulate CCPs as deterministic optimization problems that can be solved using standard optimization techniques.

%Indeed, the deterministic reformulation of  \eqref{GenCC} using SAA results in a mixed-integer problem (MIP). For the resolution of CCPs using MIP reformulations, we refer the reader to \cite{kucukyavuz2022}.

Our main contributions are as follows:
\begin{itemize}
    \item We give a unified modeling framework for the family of cooperative covering problems by means of the introduction of an OMf to model the total level of attraction associated to the decision maker's company. 
    \item We formulate the model as a multiperiod stochastic problem where the total level of attraction is described by an additional embedded maximization problem. In the particular case where the weights of the OMf are sorted in non-increasing order, the OMf problem can be formulated as a linear assignment problem. In this case, the model is reformulated as a MINLP.
    \item We propose a solution method to produce an equivalent mixed-integer linear problem (MILP), which we subsequently  strengthen by way of tailored valid inequalities and preprocessing techniques. 
    \item Alternatively, we propose a decomposition method based on the addition of Generalized Benders Cuts to a relaxed version of the problem with a very reduced number of variables and constraints. 
    \item We run extensive computational experiments designed to test the performance of the proposed formulations and solution techniques. Furthermore, we solve medium-size and large-scale instances of practical relevance: those from a case study on placing charging stations for electric vehicles in the city of Trois-Rivières, Québec (Canada) proposed by \citet{anjos2022arxiv}. Using this case study, we illustrate how the choice of the vector of weights in the OMf is a key decision and has a significant impact on the location of the charging stations.
\end{itemize}

The paper is organized as follows. Section \ref{sec:formulation} introduces the notation and the formulation presented. In Section \ref{sec:reformBilevel}, we derive a single-level MINLP model with the same set of optimal solutions (in terms of the location variables and the customer's decisions) than the initial model. Section \ref{sec:MILP} is devoted to the first solution approach, which includes: 1) a linearization of the MINLP model that results in a MILP model that can be solved using readily available off-the-shelf solvers (Gurobi, CPLEX, XPRESS-MP, etc.); and 2) valid inequalities and preprocessing techniques to obtain a tight and compact model. Section \ref{sec:Benders} is devoted to the second solution approach presented, a Benders' like decomposition scheme. Section \ref{sec:EstudioComputacional} comprises two computational studies, one with randomly generated small and medium-size instances designed to test and compare the solution approaches presented, and the case study on the placement of electric vehicle charging stations proposed by \citet{anjos2022arxiv}. Finally, some conclusions are stated in Section \ref{sec:conclusiones}.

\section{Formulation} \label{sec:formulation}
In this section, we formally define the mathematical programming model studied in this paper and introduce the notation. Recall that the objective is the maximization of covered demand in a cooperative setting, when a customer is covered if the weighted sum of partial attractions to the existing facilities exceeds a threshold and this weighted sum follows an ordered median function.  

%\subsection*{First-level formulation}
Consider $\J = \{1,\dots,|\J|\}$ as the set of candidate locations of facilities. Throughout the paper, and abusing notation, $j$ is used to represent both the location $j$ and the (potential) facility located in $j$. Let $\K_j=\{1,\dots,|\K_j|\}$ represent the facility types that can be installed in $j\in \J$.  W.l.o.g., the types are ordered from the least expensive to the most expensive one. For each time period $t\in \T =\{1,\dots, |\T|\}$, assume there is a budget $b^t$ to spend on locating and/or extending facilities. Installing a facility $j$ of type $k$ at time period $t$ has an associated cost $c^t_{jk}$, with $c^t_{jk}$ non-decreasing in $k$. We assume that the facilities can only be upgraded (with a cost equal to the difference of the costs of the types in the corresponding time periods), but they cannot be eliminated or downsized.

We define binary variable $x_{jk}^t$, $\forall j\in \J$, $k\in \K_j$, $t\in \T$, equal to 1 if and only if a facility of type $k$ is installed in $j$ at time period $t$. Using these variables, the constraints associated to the location of facilities are stated as:
\begin{subequations}
\begin{align}
    & \dsum_{t' \in \T:\atop t'\le t} \dsum_{j\in \J} \dsum_{k \in \K_j} c_{jk}^{t'}(x_{jk}^{t'} - x_{jk}^{t'-1}) \leq \dsum_{t' \in \T:\atop t'\le t} b^{t'}, \quad \forall t \in \T, \label{BL1_budgettemporal}\\
    & \dsum_{k\in \K_j} x_{jk}^{t} \le  1, \quad \forall j \in \J, t \in \T, \label{BL1_outletssiexistej} \\
    &  \dsum_{k\in \K_j} kx_{jk}^{t-1} \leq \dsum_{k\in \K_j} kx_{jk}^{t} , \quad \forall j \in \J, t \in \T\setminus \{1\}, \label{BL1_xcrecienteagregadas}\\
    & x_{jk}^{t} \in \{0,1\}, \quad \forall j \in \J, t \in \T, k\in \K_j. \label{BL1_xbinaria}
\end{align}
\end{subequations}

The set of constraints \eqref{BL1_budgettemporal} guarantees that the cost of the facilities installed up to  time period $t$ does not surpass the total budget $\sum_{t'\le t} b^{t'}$ (with $x^t_{jk} = 0$ for $t=0$). By summing up on the time periods in \eqref{BL1_budgettemporal}, we allow for the surplus budget from time period $t$ to be used in subsequent time periods. Constraints \eqref{BL1_budgettemporal} are a generalization of the usual constraints fixing a number $p$ of facilities to be installed that is necessary when facilities with different installation costs that may also depend on time are considered. Constraints \eqref{BL1_outletssiexistej} ensure that only one facility can be placed in each location.  Constraints \eqref{BL1_xcrecienteagregadas} ensure that the facility of type $k$ can only be upgraded (or remain untouched) for subsequent time periods. 

Note that we have included only the simplest constraints on the location of facilities. Nevertheless, additional constraints may be required by the firm. For instance, constraint \eqref{BL1_budgettemporal} can be replaced by a constraint limiting the number of facilities of each type to open, or the company may choose to add preference constraints of the type $\sum_{k\in \K_j} x^t_{jk} \le \sum_{k\in \K_{j'}} x^t_{j'k}$ for $j,j'\in \J$, $t \in \T$, if for some reason location $j'$ is to be chosen before location $j$.

Furthermore, consider a set of classes of customers  $\I=\{1,\dots,|\I|\}$ with a homogeneous behavior, where $n_i^t$ represents the weight of class $i\in \I$ (associated, for instance, to the population of such a class) in period $t \in \T$. Since the attraction for each facility is unknown, we follow a sample average approximation method, widely used in Stochastic Programming, to estimate them. Thus, we consider a set $\S$ of scenarios with equal probabilities. As stated, a cooperative covering with an embedded Ordered Median function (OMf) is considered, and customers are covered if the resulting total attraction exceeds a threshold. Hence, for any user class $i\in \I$, time period $t\in \T$ and scenario $s\in \S$, we consider two different alternatives: $T_i^{ts}$ is a parameter representing the threshold, and $U_{i}^{ts}$ is a continuous variable that represents the total attraction associated to $i$, $t$ and $s$ as a function of the partial attractions given by each open facility. Then, defining a binary variable $z_i^{ts}$ $\forall i\in \I$, $t\in \T$, $s\in \S$, equal to 1 if and only if user class $i$ is covered in time period $t$ and scenario $s$, the constraints associated to the covering of each customer are as follows:
\begin{subequations} 
\begin{align}
     T_i^{ts} z_i^{ts} \le U_{i}^{ts} z_i^{ts}, \quad \forall i\in \I, t \in \T, s\in \S, \label{BL1_OF2sola} \\
     z_i^{ts} \in \{0,1\}, \quad \forall i\in \I, t \in \T, s\in \S, \label{BL1_zinteger}
\end{align}
\end{subequations}
\noindent and the objective can be stated as:
\begin{equation}
\max \quad \dsum_{t\in \T} \dsum_{i \in \I} n_i^t \frac{1}{|\S|} \dsum_{s \in \S} z_i^{ts}. 
\end{equation}
\noindent We remark that the multiplication of the variable $z^{ts}_i$ in the right-hand side of \eqref{BL1_OF2sola} is not needed from a modeling point of view. However, its inclusion makes fractional values of this variable infeasible when $T^{ts}_i > U^{ts}_i$, and thus gives rise to tighter relaxation bounds. Take, for instance, $T^{ts}_i=3$ and $U^{ts}_i=2$ for some $i,t,s$. Then $T^{ts}_i z^{ts}_i \le U^{ts}_i$ is satisfied by any $z^{ts}_i \in [0,2/3]$, whereas \eqref{BL1_OF2sola} is only satisfied for $z^{ts}_i=0$, so naturally the bound given by the relaxation of the problem is tighter with the non linear constraint. Furthermore, in Sections \ref{sec:MILP} and \ref{sec:Benders} we present linear reformulations of this initial model that maintain the tight bound given by \eqref{BL1_OF2sola}, and none of them requires additional variables. That is why we decided to formulate the covering of each customer through the non linear constraints \eqref{BL1_OF2sola}.

\subsubsection*{Cooperative covering}
The attraction of a customer towards the company depends on the location of the facilities, and therefore varies with the number and type of facilities placed. In order to define it, we consider the total level of attraction $U_{i}^{ts}$ as a function of the \textit{partial attractions} $u_{ij}^{ts}$ associated to each potential location $j\in \J$ of a facility. There are $k$ types of facilities that can be placed in $j$, and w.l.o.g.\ the types are ordered from the least attractive to the most attractive one. Then, each partial attraction $u_{ij}^{ts}$ is a continuous variable with a strictly positive value if and only if there exists a facility $j \in \J$ that is open for some $k \in \mathcal{K}_j$, i.e.,:
\begin{equation} \label{def_varuparciales}
    u_{ij}^{ts} := \begin{cases}  a_{ijk}^{ts}, & \text{if a facility $j\in \J$ of type $k \in \K_j$ is open}, \\
	0, & \text{otherwise,}\end{cases} \quad \forall  i \in \I, j\in \J, t\in \T, s\in \S.
 \end{equation}

Here, $a_{ijk}^{ts}$ is a parameter that estimates the partial attraction associated to placing a facility $j$ of type $k$ for the user class $i$ in time period $t$ and scenario $s$. To account for uncertainty, this attraction is divided in two parts: a measurable and deterministic part and a random non-observable one (that can be viewed as an error), i.e., $a = \hat{a} + \epsilon$. The partial attraction is only related to the distance, so it is  usually given by a general decay function $\phi(d)$ that monotonically decreases with the distance $d$. In the discrete setting, however, $\hat{a}$ can be viewed as the general attraction of the facility and can depend on many factors other than the distance, such as its size, if it is close to other establishments, etc. Finally, as stated, we assume that $a_{ijk}^{ts}$ is non-decreasing in~$k$.

Making use of the fact that only one facility can be placed in $j$, we can define the value of variable $u^{ts}_{ij}$ in terms of the location variables by means of the following equality constraint:
\begin{equation} \label{BL1_covering}
 u_{ij}^{ts} = \sum_{k\in \K_j} a_{ijk}^{ts}x_{jk}^t, \quad \forall i \in \I, j\in \J, t\in \T, s\in \S.
\end{equation}

\subsubsection*{The OMf}
As stated in the introduction, we make use of the OMf to model the total level of attraction $U_{i}^{ts}$ of customers for each $i$, $t$ and $s$. This function is a weighted sum of ordered elements, i.e.,  a mapping $\Phi_\lambda: \R^{|\J|} \rightarrow \R$ with associated weighting vector $\boldsymbol{\lambda_i} = (\lambda_{i1},\dots,\lambda_{i|\J|})$. Hence, the total level of attraction is defined as:
\begin{equation} \label{BL1_Totalcovering}
    U_{i}^{ts} := \Phi_{\boldsymbol{\lambda_i}}(u_{i1}^{ts},\dots,u_{i|\J|}^{ts}) = \sum_{j\in \J} \lambda_{ij} u_{i(j)}^{ts},
\end{equation}
\noindent where $u_{i(r)}^{ts}$ is the $r$-th largest input vector component of $u^{ts}_i$, i.e., $u_{i(1)}^{ts} \geq \ldots \geq u_{i(|\J|)}^{ts}$.

The value of vector $\boldsymbol{\lambda_i}$ is directly related to the application, and also to the assumptions made on the customer's choice rule. It can be set according to the characteristics assumed for each customer class. For instance, if we consider the vector $\boldsymbol{\lambda_i}=(1,0,\dots,0)$, then the total level of attraction $U_{i}^{ts}$ takes the value of the highest partial attraction. Then, the problem results in the classical MCLP. For more general vectors such as $\boldsymbol{\lambda_i}=(\underbrace{1,\dots,1}_{\ell},0,\dots,0)$, the total level of attraction is given by the sum of the partial attractions of the $\ell$\textit{-th closest or most attractive} facilities for a client. This setting has applications in signal-transmission facilities (such as cell-phone towers or light standards) when the partial attractions are given by a decay function $\phi(d)$ decreasing with the distance $d$ \citep{berman2009,berman2010} and up to $\ell$ facilities can cooperate in the covering. A second application arises in \cite{lin2021} in the context of the Maximum Capture Facility Location, where customers rank the facilities by non-decreasing attraction and then form a \textit{consideration set} with the $\ell$ facilities of higher rank. Finally, vectors such as $\boldsymbol{\lambda_i}=(1,\frac{1}{2},\frac{1}{4},0,\dots,0)$ correspond to customers whose total level of attraction is \textit{mainly} given by their favourite facility, but additional interesting facilities can increase the attraction to the company. It is also a generalization of backup covering models \citep{hogan1986}, where the partial attraction given by second and third options (i.e.\ backup facilities) is taken into account. We remark that OCMCLP is NP-hard because it reduces to MCLP when $\lambda_i=(1,0,\dots,0)$ $\forall i\in \I$. The NP-hardness of MCLP is proved in \citet{hochbaum1997}.

In our setting, and given that the partial attractions of a customer are ordered in non-increasing order, we can obtain a reformulation of the OMf by considering any vector $\boldsymbol{\lambda_i}$ \citep[see][]{fernandez2013}. For this, define the binary variables $\sigma_{ijr}^{ts}$ $\forall i\in\I$, $t\in\T$, $s\in \S$, $j,r\in \J$. Then $\sigma_{ijr}^{ts}=1$ if and only if $u^{ts}_{ij}$ is the $r$-th largest partial attraction for customer class $i\in \I$. With these variables, the integer model is:
\begin{subequations} \label{OMf}
\begin{align}
     U_{i}^{ts} = \quad\max_{\boldsymbol{\sigma}_i^{ts}} \quad & \dsum_{j \in \J} \dsum_{r \in \J}\lambda_{ir} u_{ij}^{ts}  \sigma_{ijr}^{ts} \label{OMf_OF3}\\
     \text{s.t.} \quad & \dsum_{j \in \J} \sigma_{ijr}^{ts} = 1,\quad \forall r \in \J, \label{OMf_assignmentj}\\
    &\dsum_{r\in \J} \sigma_{ijr}^{ts} = 1,\quad \forall j \in \J, \label{OMf_assignmentr}\\	
    & \dsum_{j \in \J} u_{ij}^{ts}  \sigma_{ijr-1}^{ts} \ge \dsum_{j \in \J} u_{ij}^{ts} \sigma_{ijr}^{ts}, \quad \forall r\in \J \setminus \{1\}, \label{OMf_orderpartial coverings}\\
    & \sigma_{ijr}^{ts} \in \{0,1\},\quad \forall j,r \in \J. \label{OMf_sigmabinary} 
\end{align}
\end{subequations}

However, we consider only non-increasing vectors $\boldsymbol{\lambda_i}$, i.e., any $\boldsymbol{\lambda_i} \ge \boldsymbol{0}$ such that $\lambda_{i1} \ge\dots \ge \lambda_{i|\J|}$. The reason is that, realistically, the partial attraction of a customer towards a specific facility (which can be seen e.g.\ as the percentage of times they make use of said facility) decreases when bigger/closer facilities are installed. Hence, if customers obtain their total level of attraction by summing up the weighted partial attractions, they will likely penalize facilities with a lower partial attraction (such as the smallest/farthest facilities). Besides, defining the OMf as en embedded optimization problem with a fixed monotone $\boldsymbol{\lambda_i}$ and parameters $a_{ijk}^{ts}$ non-decreasing in $k$ guarantees that the total level of attraction is non-decreasing when more facilities are located throughout time. This is also a realistic assumption that guarantees some consistency in the model. Finally, when the entries of the vector $\boldsymbol{\lambda_i}$ are non-increasing and the ordering of the partial attractions with respect to $j$ as well, $\Phi_{\boldsymbol{\lambda_i}}$  can be stated as an assignment problem, i.e., problem \eqref{OMf} without constraints \eqref{OMf_orderpartial coverings}.

The complete model (OCMCLP) proposed is then: 
\begin{subequations} \label{BL1}
\begin{align}
\text{(OCMCLP)} \quad \max_{\boldsymbol{x},\boldsymbol{u}, \boldsymbol{z}}\quad & \dsum_{t\in \T} \dsum_{i \in \I} n_i^t \frac{1}{|\S|} \dsum_{s \in \S} z_i^{ts} \label{BL1_OF1}\\
    \text{s.t.}\quad & \eqref{BL1_budgettemporal}-\eqref{BL1_xbinaria}, \eqref{BL1_OF2sola}-\eqref{BL1_zinteger}, \eqref{BL1_covering}, \label{BL1_constraintsMaster}\\
    & U_{i}^{ts} = \quad\max_{\boldsymbol{\sigma}_i^{ts}} \quad \dsum_{j \in \J} \dsum_{r \in \J}\lambda_{ir} u_{ij}^{ts}  \sigma_{ijr}^{ts} \label{BL1_OF3}\\
    &\hspace{18mm}  \text{s.t.} \quad  \dsum_{j \in \J} \sigma_{ijr}^{ts} = 1,\quad \forall r \in \J, \label{BL1_assignmentj}\\
    &\hspace{18mm}  \phantom{s.t.}\quad\dsum_{r\in \J} \sigma_{ijr}^{ts} = 1,\quad \forall j \in \J, \label{BL1_assignmentr}\\
	& \hspace{18mm}  \phantom{s.t.} \quad \sigma_{ijr}^{ts} \in \{0,1\},\quad \forall j,r \in \J. \label{BL1_sigmabinary}
\end{align}
\end{subequations}

%Problem \eqref{BL1} is ill-posed, since multiple solutions to the lower level \eqref{BL1_OF2}--\eqref{BL1_zinteger} exist when $T_i^{ts}=U^{ts}_i$. In this case, we follow the optimistic assumption and consider that the customer makes the most favourable choice for the leader, i.e., $z^{ts}_i=1$. Moreover, t
%The values of the partial coverings $u^{ts}_{ij}$ are uniquely determined by variables $x^t_{jk}$ and act as \textit{parameters} in the objective function \eqref{BL1_OF3} of the assignment problem. The assignment problem \eqref{BL1_OF3}-\eqref{BL1_sigmabinary} is used to define $U^{ts}_i$ through the OMf $\Phi_{\boldsymbol{\lambda_i}}$.

Model (OCMCLP) has a nested optimization problem \eqref{BL1_OF3}-\eqref{BL1_sigmabinary} designed to obtain the value of $U^{ts}_i$. The values of the partial attractions $u^{ts}_{ij}$ are uniquely determined by variables $x^t_{jk}$ and act as \textit{parameters} in the objective function \eqref{BL1_OF3}. We reformulate it in the next section, obtaining a single-level mixed-integer linear formulation that can be solved using modern general-purpose MILP solvers. 

To emphasize the relevance of adequately choosing the vector $\boldsymbol{\lambda_i}$, we have included Example~\ref{ex:ejemplo1}, which shows an optimal solution of the same instance with different vectors $\boldsymbol{\lambda_i}$.

\begin{example} \label{ex:ejemplo1}
 In the toy instance considered, $|\T|=|\S|=1$, so we remove the indices $t$, $s$ from the variables and parameters. Furthermore, $|\J|=2$ and $|\K_j|=3$ for all $j$, and the costs $c_{jk}$ of opening any facility $j$ of type $k=1,2,3$ are, respectively, 2,3,5, $\forall j$. The budget is $b=5$, so in any feasible solution we can place up to one facility of type 3, or up to two facilities of types $k=1,2$.  

As for the customer classes, $|\I|=3$, $n_i=1$ $\forall i\in \I$ (so we identify customer classes with customers) and the threshold $T_i$ is the same for all the customers, $T_i=3$ $\forall i$. The partial attractions of all the customers for each facility and type can be seen in Table \ref{tab:ejemplo1partial coverings}. For instance, the partial attraction $a_{323}=3.5$. 

\begin{table}[H]
\begin{center}
\begin{tabular}{ccrrrrrr} 
\toprule
Customers & $T_i$ & \multicolumn{3}{c}{$j=1$} & \multicolumn{3}{c}{$j=2$}\\ 
\cmidrule(lr){3-5} \cmidrule(lr){6-8}
      &   & $k=1$ & $k=2$ & $k=3$ & $k=1$ & $k=2$ & $k=3$  \\ \hline
$i=1$ & 3 &     2 &   2.5 &     3 &     1 & 1.5   & 2 \\ 
$i=2$ & 3 &     2 &     3 &     4 &     1 & 1.5   & 2 \\ 
$i=3$ & 3 &   1.5 &     2 &   2.5 &   2.5 & 3     & 3.5 \\
\bottomrule
\end{tabular}
\caption{Attraction matrix $(a_{ijk})$ and vector of thresholds $T_i$ for Example \ref{ex:ejemplo1}.} 
\label{tab:ejemplo1partial coverings}
\end{center}
\end{table}

We have solved this instance for two different $\boldsymbol{\lambda_i}$ vectors and show the optimal placement of facilities in Figure \ref{fig:example1}. Note that, for ease of illustration, customer 1 is represented as $i_1$ and facility $1$ is represented as $j_1$ (and so on). In Figure \ref{fig:ex1a}, the total level of attraction for each customer only depends on the partial attraction of their most relevant station, i.e., $\boldsymbol{\lambda_i} = (1,0)$ $\forall i\in \I$. This is the classical non-cooperative MCLP. In this case, the optimal solution consists of placing one facility of type $k=3$ in $j=1$, and customers 1 and 2 are covered (i.e., $z_1=z_2=1$, $z_3=0$).  The optimal value (the number of customers covered in this example) is equal to 2. 

\begin{figure}
    \centering
    \begin{subfigure}[b]{.48\linewidth}
    \centering
    \fbox{\includegraphics[scale = 5.5]{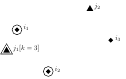}}
    \caption{$\lambda_i = (1,0)$.}\label{fig:ex1a}
    \end{subfigure}~
    \begin{subfigure}[b]{.48\linewidth}
    \centering 
    \fbox{\includegraphics[scale = 5.5]{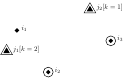}}
    \caption{$\lambda_i = (0.9,0.5)$.}\label{fig:ex1b}
    \end{subfigure}
    \caption{Illustrative example of solutions obtained using different $\boldsymbol{\lambda_i}$.}
    \label{fig:example1}
\end{figure}

In Figure \ref{fig:ex1b}, we assume that the total level of attraction is given by an aggregation of the partial attractions of the two most relevant facilities for each customer, weighted using $\boldsymbol{\lambda_i} = (0.9,0.5)$ $\forall i\in \I$. In this setting, the optimal placement of facilities is given by opening a facility of type $k=2$ in $j=1$ and a facility of type $k=1$ in $j=2$. In this case, customer 1 prefers facility 1 to 2, so the total level of attraction is $U_1= \lambda_{11} u_{1(1)} + \lambda_{12}u_{1(2)}  = \lambda_{11}a_{112} +  \lambda_{12}a_{121} = 0.9 \cdot 2.5 + 0.5 \cdot 1= 2.75$, so customer 1 is not covered in this solution. However, customer 3 prefers facility 2 over facility 1, thus $U_3 = \lambda_{31} u_{3(1)} + \lambda_{32}u_{3(2)} =0.9a_{321} + 0.5 a_{312} = 0.9 \cdot 2.5 + 0.5 \cdot 2 = 3.25$, so customer 3 is covered (and so is customer 2). The optimal value is also 2.

As illustrated, $\boldsymbol{\lambda_i}=(1,0)$ favors the location of fewer but bigger/more attractive facilities, whereas other vectors tend to favor solutions with more facilities of smaller size. The decision maker can choose the adequate $\boldsymbol{\lambda_i}$ depending on the setting, the type of customer classes (if they base their covering on a single facility or on a combination of several of them) and the desired solutions.

\end{example}

\section{Reformulation of model (OCMCLP) into a MINLP} \label{sec:reformBilevel}
In formulation \eqref{BL1}, the value of the total level of attraction variable $U^{ts}_i$ is obtained as the solution of an assignment problem. To obtain a single-level formulation, we consider fixed $i$, $t$ and $s$ and we focus on obtaining the value of $U^{ts}_i$ for fixed partial attraction values $u_{ij}^{ts}$. 
%In the bilevel formulation \eqref{BL1}, the first-level objective value depends on the decision of the customers, i.e., on the values of variables $z^{ts}_i$. Thus, for each customer class $i$, time period $t$ and scenario $s$, the value of $z^{ts}_i$ is obtained by solving an optimization problem where the value of the total level of attraction variable $U^{ts}_i$ is obtained as the solution of an assignment problem. To obtain a single-level formulation, we consider fixed $i$, $t$ and $s$ and we focus on obtaining the value of $z^{ts}_i$ for fixed partial covering values $u_{ij}^{ts}$. 
\begin{prop} \label{prop:binivelaunnivel}
%Problem \eqref{BL1} can be reformulated as the resulting MIP:
Consider the following single-level MINLP:
\begin{subequations} \label{SingleLevelNoLineal}
\begin{align} 
\max_{\boldsymbol{x},\boldsymbol{z},\boldsymbol{u},\boldsymbol{\sigma}}\quad & \dsum_{t\in \T} \dsum_{i \in \I} n_i^t \frac{1}{|\S|} \dsum_{s \in \S} z_i^{ts} \label{SingleLevelNoLineal_OF}\\
\text{\rm s.t.}\quad & 
\eqref{BL1_budgettemporal}-\eqref{BL1_xbinaria}, \eqref{BL1_covering}, \label{SingleLevelNoLineal_MainConstraints}\\
\quad & T_i^{ts}z_i^{ts} \le U_{i}^{ts}z_i^{ts}, \quad \forall i \in \I, t\in \T, s\in \S,  \label{SingleLevelNoLineal_rest_z0}\\
& U_{i}^{ts} = \dsum_{j \in \J} \dsum_{r \in \J}\lambda_{ir} u_{ij}^{ts}  \sigma_{ijr}^{ts},\quad \forall i \in \I, t\in \T, s\in \S,   \label{SingleLevelNoLineal_U}  \\
& \dsum_{j \in \J} \sigma_{ijr}^{ts} \le 1,\quad \forall i \in \I, t\in \T, s\in \S, r \in \J,  \label{SingleLevelNoLineal_sumasigmaj2}\\
&\dsum_{r\in \J} \sigma_{ijr}^{ts} \le 1,\quad \forall i \in \I, t\in \T, s\in \S, j \in \J, \label{SingleLevelNoLineal_sumasigmar2} \\
& \sigma_{ijr}^{ts} \in [0,1], \quad \forall i \in \I, t\in \T, s\in \S, j,r \in \J, \label{SingleLevelNoLineal_sigmacontinuas} \\
& z_i^{ts} \in [0,1], \quad \forall i \in \I, t\in \T, s\in \S. \label{SingleLevelNoLineal_zcontinuas}
\end{align}
\end{subequations}
Problem \eqref{SingleLevelNoLineal} is a relaxation of problem \eqref{BL1} with the same set of optimal solutions in terms of $(x,z)$.
%Problems \eqref{BL1} and \eqref{SingleLevelNoLineal} have the same set of optimal solutions in terms of variables $(x,z)$.
\end{prop}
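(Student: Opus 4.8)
The plan is to prove the two assertions of the proposition separately: first that \eqref{SingleLevelNoLineal} is a relaxation of \eqref{BL1}, and then that the two problems share the same optimal $(x,z)$. Throughout I would use that, by construction, $\boldsymbol{\lambda_i}\ge\boldsymbol{0}$ and $u_{ij}^{ts}\ge 0$, so the objective coefficients $\lambda_{ir}u_{ij}^{ts}$ of the embedded assignment problem are non-negative.

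For the relaxation part, I would take any feasible $(\boldsymbol{x},\boldsymbol{u},\boldsymbol{z})$ of \eqref{BL1} and, for each $i,t,s$, let $\boldsymbol{\sigma}_i^{ts}$ be an optimal assignment of the inner problem \eqref{BL1_OF3}--\eqref{BL1_sigmabinary}, so that $U_i^{ts}=\dsum_{j,r}\lambda_{ir}u_{ij}^{ts}\sigma_{ijr}^{ts}$. I would then verify that $(\boldsymbol{x},\boldsymbol{z},\boldsymbol{u},\boldsymbol{\sigma})$ is feasible for \eqref{SingleLevelNoLineal}: the constraints \eqref{SingleLevelNoLineal_MainConstraints} and \eqref{SingleLevelNoLineal_rest_z0} are identical to those of \eqref{BL1}; the equalities \eqref{BL1_assignmentj}--\eqref{BL1_assignmentr} imply the inequalities \eqref{SingleLevelNoLineal_sumasigmaj2}--\eqref{SingleLevelNoLineal_sumasigmar2}; and $\{0,1\}\subset[0,1]$ takes care of \eqref{SingleLevelNoLineal_sigmacontinuas}--\eqref{SingleLevelNoLineal_zcontinuas}. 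Since the objective depends only on $\boldsymbol{z}$ and is unchanged, every $(x,z)$ feasible for \eqref{BL1} is feasible for \eqref{SingleLevelNoLineal} with the same value, whence \eqref{SingleLevelNoLineal} is a relaxation.

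For the coincidence of the optimal $(x,z)$, the crucial point is that, even though \eqref{SingleLevelNoLineal} relaxes the embedded assignment in two ways (continuous $\boldsymbol{\sigma}$ and inequality constraints), the outer maximization still drives $U_i^{ts}$ up to the true ordered-median value. I would fix a feasible $\boldsymbol{x}$, which fixes $\boldsymbol{u}$ through \eqref{BL1_covering}, and observe that the model then separates over $(i,t,s)$ in the variables $(z_i^{ts},\boldsymbol{\sigma}_i^{ts})$. The key lemma to establish is that
\[
\max\Big\{\dsum_{j\in\J}\dsum_{r\in\J}\lambda_{ir}u_{ij}^{ts}\sigma_{ijr}^{ts}: \eqref{SingleLevelNoLineal_sumasigmaj2}-\eqref{SingleLevelNoLineal_sigmacontinuas}\Big\}=\bar U_i^{ts},
\]
where $\bar U_i^{ts}$ is the optimal value of the inner binary assignment problem \eqref{BL1_OF3}--\eqref{BL1_sigmabinary}. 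This rests on two facts: non-negativity of the coefficients, which ensures the maximum over the partial-matching polytope cut out by the $\le 1$ constraints is attained on a complete assignment (any partial matching in the complete bipartite graph extends to a perfect one without decreasing the objective); and integrality of the assignment polytope, so that the continuous relaxation attains the same value as the binary program. Granting this lemma, constraint \eqref{SingleLevelNoLineal_rest_z0} reads $(T_i^{ts}-U_i^{ts})z_i^{ts}\le 0$, so any $z_i^{ts}>0$ forces $U_i^{ts}\ge T_i^{ts}$; since $n_i^t\ge 0$, the objective pushes $z_i^{ts}$ to its maximum. Hence at optimality $z_i^{ts}=1$ exactly when $\boldsymbol{\sigma}$ can be chosen so that $U_i^{ts}\ge T_i^{ts}$, that is when $\bar U_i^{ts}\ge T_i^{ts}$, and $z_i^{ts}=0$ otherwise; in particular the optimal $\boldsymbol{z}$ is automatically binary, and this is precisely the covering rule that determines the optimal $z_i^{ts}$ of \eqref{BL1} for the same $\boldsymbol{x}$. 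For every feasible $\boldsymbol{x}$ the two models therefore yield the same optimal objective and the same binary $\boldsymbol{z}$, and maximizing over $\boldsymbol{x}$ gives equal optimal values and an identical set of optimal $(x,z)$ pairs.

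I expect the main obstacle to be the lemma of the third paragraph: carefully arguing that the double relaxation of the embedded assignment (continuous variables together with relaxed equalities) is harmless. The interplay of the non-negativity of $\lambda_{ir}u_{ij}^{ts}$ with the integrality of the bipartite matching polytope is what forces $\max_{\boldsymbol{\sigma}}U_i^{ts}=\bar U_i^{ts}$, and this identity is the linchpin that allows the continuous single-level model \eqref{SingleLevelNoLineal} to reproduce the covering decisions of the nested binary model \eqref{BL1}.
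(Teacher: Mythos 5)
Your proposal is correct and follows essentially the same route as the paper's proof: both rest on the integrality (total unimodularity) of the assignment polytope, the non-negativity of $\lambda_{ir}u_{ij}^{ts}$ to justify relaxing the equality constraints to inequalities, and the outer maximization of $z_i^{ts}$ to force an assignment attaining $U_i^{ts}\ge T_i^{ts}$ whenever one exists, while the constraint $\left(T_i^{ts}-U_i^{ts}\right)z_i^{ts}\le 0$ forces $z_i^{ts}=0$ otherwise. Your explicit isolation of the key lemma that the doubly relaxed assignment problem attains the same optimal value $\bar U_i^{ts}$ is merely a cleaner packaging of the argument the paper makes case by case on $\bar z_i^{ts}\in\{0,1\}$.
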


\begin{proof}
Consider a location vector $\bar{x}$ satisfying the location constraints \eqref{BL1_budgettemporal}-\eqref{BL1_xbinaria}, and the values of the partial attractions $\bar{u}^{ts}_{ij}$ given by \eqref{BL1_covering}. Then for fixed $i$, $t$, $s$, the values of $\bar{U}^{ts}_i$, $\bar{z}^{ts}_i$ are univocally determined by the assignment problem and constraint \eqref{BL1_OF2sola} respectively. To prove the statement, it suffices to see that (i) if $\bar{z}^{ts}_i=0$ in \eqref{BL1}, then constraints \eqref{SingleLevelNoLineal_rest_z0}-\eqref{SingleLevelNoLineal_zcontinuas} guarantee $z^{ts}_i=0$ in \eqref{SingleLevelNoLineal}, and (ii) if $\bar{z}^{ts}_i=1$ in \eqref{BL1}, then there exists a feasible solution of \eqref{SingleLevelNoLineal} with $z^{ts}_i=1$.

Let us first reformulate the assignment problem \eqref{BL1_OF3}-\eqref{BL1_sigmabinary}. To begin with, given that the constraint matrix is totally unimodular and the right-hand side vector is integer, we can relax the integrality constraints on the assignment variables $\sigma$ for fixed values of $u_{ij}^{ts}$, obtaining a linear assignment problem. Notice also that the nonnegativity assumption on the partial attractions $u^{ts}_{ij}$ and the vector $\boldsymbol{\lambda_i}$ implies that the linear equality constraints of the assignment problem can be relaxed to be less than or equal to 1.

Second, the aim of this auxiliary problem is to provide  $\bar{U}^{ts}_i := \max_{\sigma} \sum_{j \in \J} \sum_{r \in \J}\lambda_{ir} \bar{u}_{ij}^{ts}  \sigma_{ijr}^{ts}$, which in turn takes part in constraints \eqref{BL1_OF2sola} and is used to derive the value of $z^{ts}_i$. But the value of $z_i^{ts}$ only depends on the difference $\bar{U}_{i}^{ts}-T_i^{ts}$, and not on the actual value of $\bar{U}_{i}^{ts}$: if $\bar{U}_i^{ts} - T_i^{ts}\ge 0$, then $z_i^{ts}=1$. Therefore, for fixed partial attractions $\bar{u}_{ij}^{ts}$ two possibilities can occur:
\begin{itemize}
    \item[i)] There exists an assignment $\bar{\sigma}$ such that $\sum_{j \in \J} \sum_{r \in \J}\lambda_{ir} \bar{u}_{ij}^{ts}  \bar{\sigma}_{ijr}^{ts} \ge T_i^{ts}$. In this case, $\bar{z}_i^{ts}=1$ in \eqref{BL1_OF2sola} because $\bar{U}^{ts}_i  \ge$ $\sum_{j \in \J} \sum_{r \in \J}\lambda_{ir} \bar{u}_{ij}^{ts}  \bar{\sigma}_{ijr}^{ts} \ge T_i^{ts}$.
    \item[ii)] For any assignment $\sigma$, it holds $\sum_{j \in \J} \sum_{r \in \J}\lambda_{ir} \bar{u}_{ij}^{ts}  \sigma_{ijr}^{ts} < T_i^{ts}$. In this case, $\bar{U}^{ts}_i < T_i^{ts}$ and constraint \eqref{BL1_OF2sola} implies $z_i^{ts}=0$ regardless of the assignment chosen.       
\end{itemize}
Thus, we can consider the relaxation of the assignment problem given by \eqref{SingleLevelNoLineal_U}-\eqref{SingleLevelNoLineal_sigmacontinuas}. If $i)$\ holds, then the maximization of $z$ guarantees that an assignment $\bar{\sigma}$ such that $U^{ts}_i = \sum_{j \in \J} \sum_{r \in \J}\lambda_{ir} u_{ij}^{ts}  \bar{\sigma}_{ijr}^{ts} \ge T_i^{ts}$ is chosen in any optimal solution. If $ii)$\ holds, any assignment satisfying \eqref{SingleLevelNoLineal_sumasigmaj2}-\eqref{SingleLevelNoLineal_sigmacontinuas} gives rise to a feasible solution of \eqref{SingleLevelNoLineal} with $z^{ts}_i=0$, and $U^{ts}_i$ can take values in the interval $[0,\bar{U}^{ts}_i]$. Note that formulation \eqref{SingleLevelNoLineal} has feasible solutions with assignments which are infeasible in \eqref{BL1}.

Finally, note that the integrality constraint on $z^{ts}_i$ can be relaxed in \eqref{BL1}. Indeed, when $T_i^{ts} > U^{ts}_i$, constraint \eqref{BL1_OF2sola} forces $z^{ts}_i=0$, and when $T_i^{ts} \le U^{ts}_i$, the maximization of the objective function will lead to $z^{ts}_i=1$. We have chosen to include the non-linear constraint \eqref{BL1_OF2sola} instead of its linear version 
\begin{equation*}
T_i^{ts}z_i^{ts} \le U_{i}^{ts}, \quad \forall i \in \I, t\in \T, s\in \S
\end{equation*}
\noindent to obtain a tighter model where the integrality constraints on these variables can be relaxed.

\end{proof}

\section{First solution approach: a mixed-integer linear formulation} \label{sec:MILP}
In this section, we propose a linearization of formulation \eqref{SingleLevelNoLineal} that results in a MILP which can be solved using off-the-shelf solvers. We also introduce several families of valid inequalities and some preprocessing techniques to obtain a tight and compact formulation. A comparison between the MILP model and the model with the valid inequalities is included in Section \ref{sec:EstudioComputacional}.

\subsection*{Linearization of the single-level MINLP \eqref{SingleLevelNoLineal} }
Problem \eqref{SingleLevelNoLineal} is non-linear due to constraints \eqref{SingleLevelNoLineal_rest_z0} and \eqref{SingleLevelNoLineal_U}. We carry out a linearization resulting in the MILP model stated in the following proposition.
\begin{prop} \label{prop_linearizacion}
Consider the following MILP:
\begin{subequations} \label{SL}
\begin{align}
    \max\quad & \dsum_{t\in \T} \dsum_{i \in \I} n_i^t \frac{1}{|\S|} \dsum_{s\in \S} z_i^{ts} \label{SL_OF}\\
    \text{s.t.}\quad & \dsum_{t' \in \T:\atop t'\le t} \dsum_{j\in \J} \dsum_{k \in \K_j} c_{jk}^{t'}(x_{jk}^{t'} - x_{jk}^{t'-1}) \leq \dsum_{t' \in \T:\atop t'\le t} b^{t'}, \quad \forall t \in \T,  \label{SL_budgettemporal}\\
    & \dsum_{k\in \K_j} x_{jk}^t \le 1, \quad \forall j \in \J, t \in \T, \label{SL_outletssiexistej} \\
    & \dsum_{k\in \K_j} kx_{jk}^{t-1} \le \dsum_{k\in \K_j} kx_{jk}^t, \quad \forall j \in \J, t \in \T\setminus\{1\}, \label{SL_xcrecienteagregadas}\\
    &u_{ij}^{ts} = \sum_{k\in \K_j} a_{ijk}^{ts}x_{jk}^t, \quad \forall i \in \I, j\in \J, t\in \T, s\in \S, \label{SL_coveringUpperBoundX}\\
     &  T_i^{ts} z^{ts}_i \le \sum_{j \in \J}\sum_{r \in \J} \lambda_{ir} w_{ijr}^{ts} , \quad \forall i \in \I, t\in \T, s\in \S, \label{SL_zZeroLineal} \\
     & \dsum_{j \in \J} \sigma_{ijr}^{ts} \le z^{ts}_i,\quad \forall i \in \I, t\in \T, s\in \S, r \in \J,  \label{SL_sumasigmaj}\\
    &\dsum_{r\in \J} \sigma_{ijr}^{ts} \le z^{ts}_i,\quad \forall i \in \I, t\in \T, s\in \S, j \in \J, \label{SL_sumasigmar} \\
    &  w_{ijr}^{ts} \le a_{ij|\K_j|}^{ts}\sigma_{ijr}^{ts}, \quad \forall i \in \I, t\in \T, s\in \S, j,r \in \J, \label{SL_rest_ulinear_cotasuperiorz2}\\
    &  w_{ijr}^{ts} \le u_{ij}^{ts}, \quad \forall i \in \I, t\in \T, s\in \S, j,r \in \J,  \label{SL_rest_ulinear_cotasuperiorw2}\\
     & x^t_{jk} \in \{0,1\}, \quad \forall j\in \J, t\in \T, k\in \K_j, \label{SL_xybinarias}\\
    & z_i^{ts} \in [0,1], \quad \forall i \in \I, t\in \T, s\in \S, \label{SL_zcontinua} \\
    &  w_{ijr}^{ts} \ge 0, \quad \forall i \in \I,  t\in \T, s\in \S, j,r\in \J,\label{SL_coveringLowerBound0}\\
    & \sigma_{ijr}^{ts} \in \{0,1\}, \quad \forall i \in \I, t\in \T, s\in \S, j,r\in \J. \label{SL_sigmabinaria}
\end{align}
\end{subequations}
Model \eqref{SL} is a linearization of \eqref{SingleLevelNoLineal} where a new set of auxiliary variables $w^{ts}_{ijk}:= u^{ts}_{ij}\sigma^{ts}_{ijk}$, $\forall j,r,\in \J$ has been included.  
\end{prop}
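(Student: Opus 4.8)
The plan is to show that \eqref{SL} correctly linearizes the two bilinear terms of \eqref{SingleLevelNoLineal} — the product $u_{ij}^{ts}\sigma_{ijr}^{ts}$ inside the definition \eqref{SingleLevelNoLineal_U} of $U_i^{ts}$, and the product $U_i^{ts}z_i^{ts}$ in the coverage constraint \eqref{SingleLevelNoLineal_rest_z0} — so that the two models share the same feasible pairs $(\boldsymbol{x},\boldsymbol{z})$ and, since the objectives coincide, the same optimal solutions. First I would record the elementary bound $0\le u_{ij}^{ts}\le a_{ij|\K_j|}^{ts}$, which follows from \eqref{SL_coveringUpperBoundX}, the single-facility constraint \eqref{SL_outletssiexistej}, and the monotonicity of $a_{ijk}^{ts}$ in $k$; the quantity $a_{ij|\K_j|}^{ts}$ plays the role of the big-$M$ in the linearization.

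For the product $u_{ij}^{ts}\sigma_{ijr}^{ts}$, I would verify that the new variables $w_{ijr}^{ts}:=u_{ij}^{ts}\sigma_{ijr}^{ts}$ are captured exactly at optimality. Since $\sigma_{ijr}^{ts}$ is binary in \eqref{SL} (its binarity makes the McCormick linearization exact and does not change the optimal value, because the assignment polytope is integral, as already exploited in Proposition~\ref{prop:binivelaunnivel}), constraints \eqref{SL_rest_ulinear_cotasuperiorz2}, \eqref{SL_rest_ulinear_cotasuperiorw2} together with the nonnegativity \eqref{SL_coveringLowerBound0} give $w_{ijr}^{ts}=0$ when $\sigma_{ijr}^{ts}=0$ and $0\le w_{ijr}^{ts}\le u_{ij}^{ts}$ when $\sigma_{ijr}^{ts}=1$. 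Although the usual McCormick lower bound $w_{ijr}^{ts}\ge u_{ij}^{ts}-a_{ij|\K_j|}^{ts}(1-\sigma_{ijr}^{ts})$ is omitted, it is not needed: since the objective \eqref{SL_OF} is a nonnegatively weighted sum of the $z_i^{ts}$ and each $z_i^{ts}$ can increase only if the right-hand side $\sum_{j,r}\lambda_{ir}w_{ijr}^{ts}$ of \eqref{SL_zZeroLineal} does (recall $\lambda_{ir}\ge 0$), every optimal solution drives each $w_{ijr}^{ts}$ to its upper bound, yielding $w_{ijr}^{ts}=u_{ij}^{ts}\sigma_{ijr}^{ts}$.

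For the product $U_i^{ts}z_i^{ts}$, I would argue that coupling the assignment to $z_i^{ts}$ reproduces it without a triple-product variable. The key observation — already used in Proposition~\ref{prop:binivelaunnivel} — is that \eqref{SingleLevelNoLineal_rest_z0} reads $T_i^{ts}z_i^{ts}\le U_i^{ts}z_i^{ts}$, so it suffices to match the cases $z_i^{ts}\in\{0,1\}$. When $z_i^{ts}=0$, constraints \eqref{SL_sumasigmaj}--\eqref{SL_sumasigmar} force every $\sigma_{ijr}^{ts}=0$, hence every $w_{ijr}^{ts}=0$, so \eqref{SL_zZeroLineal} holds trivially and the customer is uncovered, exactly as in \eqref{SingleLevelNoLineal}. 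When $z_i^{ts}=1$, constraints \eqref{SL_sumasigmaj}--\eqref{SL_sumasigmar} reduce to the relaxed assignment constraints \eqref{SingleLevelNoLineal_sumasigmaj2}--\eqref{SingleLevelNoLineal_sumasigmar2}, and by the previous step the largest achievable value of the right-hand side of \eqref{SL_zZeroLineal} is $\max_{\sigma}\sum_{j,r}\lambda_{ir}u_{ij}^{ts}\sigma_{ijr}^{ts}=U_i^{ts}$; hence $z_i^{ts}=1$ is feasible precisely when $T_i^{ts}\le U_i^{ts}$, which is \eqref{SingleLevelNoLineal_rest_z0} at $z_i^{ts}=1$.

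Combining the two steps shows that a pair $(\boldsymbol{x},\boldsymbol{z})$ is feasible, and attains a given objective value, in \eqref{SL} if and only if it does so in \eqref{SingleLevelNoLineal}; hence the two problems have the same optimal solutions in $(\boldsymbol{x},\boldsymbol{z})$, which by Proposition~\ref{prop:binivelaunnivel} coincide with those of the original model \eqref{BL1}. The step I expect to be the main obstacle is the second paragraph: justifying that the one-sided (upper-bound only) linearization of $w_{ijr}^{ts}=u_{ij}^{ts}\sigma_{ijr}^{ts}$ is exact. This rests entirely on the monotone interplay between the maximization of $z$ and constraint \eqref{SL_zZeroLineal}, so I would state it so as to make clear that exactness is guaranteed at optimal solutions, not throughout the feasible region.
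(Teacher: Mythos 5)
Your proof is correct, but it takes a genuinely different route from the paper's for the product $U_i^{ts}z_i^{ts}$. The paper does not verify the constraints a posteriori: it \emph{derives} \eqref{SL_zZeroLineal} and \eqref{SL_sumasigmaj}--\eqref{SL_sumasigmar} by applying the perspective transformation of \citet{gunluk2012} to the disjunction $W^0\cup W^1$ (the point $(\boldsymbol{\sigma},z)=(\boldsymbol{0},0)$ versus the slice $z=1$ carrying the assignment constraints and $T_i^{ts}\le \sum_{j,r}\lambda_{ir}u_{ij}^{ts}\sigma_{ijr}^{ts}$), obtaining these constraints as a description of the convex hull of $W^0\cup W^1$. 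That construction buys something your case analysis does not: it shows the constraints are the tightest convex relaxation of the indicator logic, which is the paper's stated justification for leaving $z_i^{ts}$ continuous in \eqref{SL_zcontinua} and for the strength of the LP bound. Your direct verification on $z_i^{ts}\in\{0,1\}$ does establish the equivalence claimed in the proposition (and is arguably more elementary), but note that \eqref{SL_zcontinua} admits fractional $z_i^{ts}$, so "it suffices to match the cases $z_i^{ts}\in\{0,1\}$" needs a word of justification; you are silently rescued by the binarity of $\boldsymbol{\sigma}$, since $0<z_i^{ts}<1$ in \eqref{SL_sumasigmaj}--\eqref{SL_sumasigmar} forces $\boldsymbol{\sigma}=\boldsymbol{0}$, hence $\boldsymbol{w}=\boldsymbol{0}$ by \eqref{SL_rest_ulinear_cotasuperiorz2}, and then \eqref{SL_zZeroLineal} excludes $z_i^{ts}>0$ whenever $T_i^{ts}>0$ --- worth saying explicitly. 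The McCormick step is handled identically in both proofs (upper bounds only, the omitted lower bound justified by the maximization direction), though your claim that every optimal solution drives each $w_{ijr}^{ts}$ to its upper bound is an overstatement: what is actually needed, and what your argument in fact delivers, is that $w_{ijr}^{ts}\le u_{ij}^{ts}\sigma_{ijr}^{ts}$ holds throughout the feasible region (no spurious coverage) and that $w_{ijr}^{ts}=u_{ij}^{ts}\sigma_{ijr}^{ts}$ is achievable (no lost coverage).
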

\begin{proof}
    The linearization is carried out in two steps. First, we apply a perspective transformation in the manner of \cite{gunluk2012} to linearize constraints \eqref{SingleLevelNoLineal_rest_z0}. This transformation allows us to keep the relaxation of the integrality constraint on the $z$-variables (i.e.\ constraints \eqref{SL_zcontinua}). The second step is a standard linearization of the bilinear terms in constraints \eqref{SingleLevelNoLineal_U} in the manner of \cite{mccormick1976}. This step gives rise to constraints \eqref{SL_rest_ulinear_cotasuperiorz2}, \eqref{SL_rest_ulinear_cotasuperiorw2} and \eqref{SL_coveringLowerBound0}, and can only be applied if variables $\sigma$ are binary, thus we circle back to \eqref{SL_sigmabinaria}. The detailed proof can be found in \ref{app:tmaAlgoritmoBenders}.
\end{proof}

\subsection{Valid inequalities and preprocessing techniques for model \eqref{SL}.} \label{sec:VVII}
In the following, we introduce several sets of valid inequalities for model \eqref{SL}. 

\begin{prop}
For each $i \in \I$, $t\in \T$, $s\in \S$, the set of inequalities
\begin{equation} \label{ddvv_cotautilidad}
   \sum_{r\in \J} w_{ijr}^{ts} \le u^{ts}_{ij}, \quad \forall  j \in \J,
\end{equation}
\noindent is valid for formulation \eqref{SL}. Furthermore, they dominate constraints \eqref{SL_rest_ulinear_cotasuperiorw2}.
\end{prop}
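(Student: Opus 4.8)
The plan is to prove the two assertions in sequence: first that \eqref{ddvv_cotautilidad} holds at every point of the feasible region of \eqref{SL}, and then that, once \eqref{ddvv_cotautilidad} is in the system, the family \eqref{SL_rest_ulinear_cotasuperiorw2} becomes redundant (which is what ``dominates'' means here). Both steps are elementary; the only genuine content lies in pinning down where the integrality of $\boldsymbol{\sigma}$ is used.

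For validity I would fix $i\in\I$, $t\in\T$, $s\in\S$ and $j\in\J$ and examine $\sum_{r\in\J}w_{ijr}^{ts}$ at an arbitrary feasible point. The key structural fact is that $\sigma_{ijr}^{ts}\in\{0,1\}$ by \eqref{SL_sigmabinaria} while, by \eqref{SL_sumasigmar} together with $z_i^{ts}\le 1$ from \eqref{SL_zcontinua}, one has $\sum_{r\in\J}\sigma_{ijr}^{ts}\le z_i^{ts}\le 1$; hence at most one index $r$ can satisfy $\sigma_{ijr}^{ts}=1$. Every index $r$ with $\sigma_{ijr}^{ts}=0$ forces $w_{ijr}^{ts}=0$, since \eqref{SL_rest_ulinear_cotasuperiorz2} gives $w_{ijr}^{ts}\le a_{ij|\K_j|}^{ts}\sigma_{ijr}^{ts}=0$ and \eqref{SL_coveringLowerBound0} gives $w_{ijr}^{ts}\ge 0$. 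Thus the sum collapses to at most a single surviving term, which is bounded above by $u_{ij}^{ts}$ through \eqref{SL_rest_ulinear_cotasuperiorw2} (and when no term survives the sum is $0\le u_{ij}^{ts}$ by nonnegativity of the partial attractions). This yields $\sum_{r\in\J}w_{ijr}^{ts}\le u_{ij}^{ts}$, establishing \eqref{ddvv_cotautilidad}.

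For the dominance claim I would show that \eqref{ddvv_cotautilidad}, combined only with the nonnegativity constraints \eqref{SL_coveringLowerBound0}, implies each individual bound \eqref{SL_rest_ulinear_cotasuperiorw2}, so that no circularity arises. Indeed, fixing $i,t,s,j$ and any $r_0\in\J$, since all summands are nonnegative we have $w_{ijr_0}^{ts}\le\sum_{r\in\J}w_{ijr}^{ts}\le u_{ij}^{ts}$, which is precisely \eqref{SL_rest_ulinear_cotasuperiorw2}. Consequently the single aggregated inequality \eqref{ddvv_cotautilidad} per $(i,t,s,j)$ can replace the whole family \eqref{SL_rest_ulinear_cotasuperiorw2} without altering the feasible set, and it does so at the LP-relaxation level as well.

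The one point deserving explicit attention—and the natural place a careless argument would fail—is that the validity of \eqref{ddvv_cotautilidad} genuinely requires the binary restriction on $\boldsymbol{\sigma}$: on the continuous relaxation a fractional assignment can split mass across several indices $r$ with each $w_{ijr}^{ts}$ pushed up to $u_{ij}^{ts}$, so that $\sum_r w_{ijr}^{ts}$ exceeds $u_{ij}^{ts}$. This is exactly why \eqref{ddvv_cotautilidad} is a true strengthening rather than a mere reformulation, and I would state clearly that it is \eqref{SL_sumasigmar}, \eqref{SL_zcontinua} and the integrality \eqref{SL_sigmabinaria} acting jointly that cap the number of nonzero $w$-terms at one. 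Beyond this observation no computation is involved.
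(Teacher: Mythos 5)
Your proof is correct and follows essentially the same route as the paper's: validity hinges on $\sum_{r\in\J}\sigma_{ijr}^{ts}\le z_i^{ts}\le 1$ together with the bounds tying $w$ to $\sigma$ and $u$, and dominance follows from nonnegativity of the $w$-variables. Your version is in fact slightly more careful, since you derive the bound from the actual constraints \eqref{SL_rest_ulinear_cotasuperiorz2}--\eqref{SL_coveringLowerBound0} rather than invoking the definitional identity $w_{ijr}^{ts}=u_{ij}^{ts}\sigma_{ijr}^{ts}$ (which is not literally enforced in \eqref{SL}), and your closing remark that the inequality genuinely cuts the LP relaxation when $\sigma$ is fractional is a correct and worthwhile observation.
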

\begin{proof}
    The proof of the validity is straightforward considering the definition of $w$ and constraints \eqref{SL_sumasigmar}:
    \begin{equation*}
        \sum_{r\in \J} w_{ijr}^{ts} := \sum_{r\in \J} u^{ts}_{ij}\sigma^{ts}_{ijr} = u^{ts}_{ij} \sum_{r\in \J}\sigma^{ts}_{ijr} \le u^{ts}_{ij}.
    \end{equation*}
    \noindent Likewise, they dominate \eqref{SL_rest_ulinear_cotasuperiorw2} because the right-hand side of the constraints is the same, but the left-hand side of \eqref{ddvv_cotautilidad} has a sum of the non-negative variables $w_{ijr}^{ts}$.
\end{proof}

\begin{prop}
For each $i \in \I$, $t\in \T$, $s\in \S$, the set of inequalities
\begin{equation} \label{ddvv_outlets}
   w_{ijr}^{ts} \le a_{ijk}^{ts}\sigma_{ijr}^{ts} + \sum_{k' \in \K_j:\atop k' > k} (a_{ijk'}^{ts} - a_{ijk}^{ts})x^{t}_{jk'}, \quad \forall  j, r \in \J, k\in \K_j,
\end{equation}
\noindent is valid for formulation \eqref{SL}. 
\end{prop}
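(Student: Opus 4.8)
The plan is to show that \eqref{ddvv_outlets} cannot be violated at any feasible point of \eqref{SL}, arguing by cases according to whether a facility is open at $j$ and, if so, of which type. I would fix the indices $i\in\I$, $t\in\T$, $s\in\S$, $j,r\in\J$ and $k\in\K_j$ throughout. The first step is to reduce the claim to an inequality involving only $u$, $\sigma$ and $x$. Since $\sigma^{ts}_{ijr}\in\{0,1\}$ by \eqref{SL_sigmabinaria}, constraints \eqref{SL_rest_ulinear_cotasuperiorz2}, \eqref{SL_rest_ulinear_cotasuperiorw2} and \eqref{SL_coveringLowerBound0} force $w^{ts}_{ijr}=0$ whenever $\sigma^{ts}_{ijr}=0$, and give $w^{ts}_{ijr}\le u^{ts}_{ij}$ whenever $\sigma^{ts}_{ijr}=1$; in either case $w^{ts}_{ijr}\le u^{ts}_{ij}\sigma^{ts}_{ijr}$. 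Hence it suffices to prove the stronger reduced inequality
\begin{equation*}
u^{ts}_{ij}\sigma^{ts}_{ijr}\le a^{ts}_{ijk}\sigma^{ts}_{ijr}+\sum_{k'\in\K_j:\,k'>k}(a^{ts}_{ijk'}-a^{ts}_{ijk})\,x^{t}_{jk'}.
\end{equation*}

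The second step is a short case analysis on this reduced inequality, substituting $u^{ts}_{ij}=\dsum_{k'\in\K_j}a^{ts}_{ijk'}x^{t}_{jk'}$ from \eqref{SL_coveringUpperBoundX} and using two structural facts: at most one type can be open at $j$ by \eqref{SL_outletssiexistej} (with $x$ binary by \eqref{SL_xybinarias}), and $a^{ts}_{ijk}$ is non-decreasing in $k$. If $\sigma^{ts}_{ijr}=0$, the left-hand side vanishes while the right-hand side is a sum of nonnegative terms, so the inequality holds. If $\sigma^{ts}_{ijr}=1$ and no facility is open at $j$, then $u^{ts}_{ij}=0$ and the sum is empty, leaving $a^{ts}_{ijk}\ge0$ on the right. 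If $\sigma^{ts}_{ijr}=1$ and a facility of some type $k^{*}$ is open, then $u^{ts}_{ij}=a^{ts}_{ijk^{*}}$ and the sum contributes exactly $a^{ts}_{ijk^{*}}-a^{ts}_{ijk}$ when $k^{*}>k$ and $0$ otherwise: when $k^{*}\le k$ monotonicity gives $a^{ts}_{ijk^{*}}\le a^{ts}_{ijk}$, and when $k^{*}>k$ both sides equal $a^{ts}_{ijk^{*}}$, so the inequality holds (with equality) in each subcase.

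The one point that needs genuine care is the reduction carried out in the first step. Because the McCormick linearization in \eqref{SL} retains only the upper-bounding constraints on $w$ and deliberately omits the lower-bounding ones, the variable $w^{ts}_{ijr}$ need not equal the product $u^{ts}_{ij}\sigma^{ts}_{ijr}$ at an arbitrary feasible point; it only satisfies $w^{ts}_{ijr}\le u^{ts}_{ij}\sigma^{ts}_{ijr}$. I therefore must establish this upper bound on $w^{ts}_{ijr}$ before substituting, rather than treating $w$ as the exact bilinear term. Once that bound is in place, the remaining case analysis is routine, resting entirely on the single-open-type constraint \eqref{SL_outletssiexistej} and the monotonicity of the attractions $a^{ts}_{ijk}$ in $k$.
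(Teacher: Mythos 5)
Your proof is correct and follows essentially the same case analysis as the paper's: it splits on whether $\sigma^{ts}_{ijr}=0$ or a facility is open at $j$, and then on whether the open type lies above or below $k$, using the single-open-type constraint and the monotonicity of $a^{ts}_{ijk}$ in $k$. Your explicit preliminary reduction to $w^{ts}_{ijr}\le u^{ts}_{ij}\sigma^{ts}_{ijr}$ is a slightly more careful rendering of a step the paper takes implicitly (the only slip is cosmetic: in the case where no facility is open the sum is not empty but vanishes because every $x^{t}_{jk'}=0$).
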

\begin{proof}
For given $i \in \I$, $t\in \T$, $s\in \S$, $j,r\in \J$, let us prove that the right-hand side of \eqref{ddvv_outlets} is an upper bound on the value of $w_{ijr}^{ts}$ $\forall k \in \K_j$. Two situations can arise, depending on the values of $\sigma_{ijr}^{ts}$ and $x^t_{jk}$:
\begin{itemize}
    \item If $\sigma_{ijr}^{ts}=0$ or $\sum_{k\in \K_j} x^t_{jk}=0$, then $w_{ijr}^{ts}=0$ and the right-hand side of the constraint is non-negative (because $a$ is non-decreasing in $k$ by assumption), so the constraint holds. 
    \item Otherwise, $w_{ijr}^{ts}\le a_{ij\bar{k}}^{ts}$ for a given $\bar{k}\in \K_j$ such that $x^{t}_{j\bar{k}}=1$. Then for $1 \le k<\bar{k}$, the right-hand side of \eqref{ddvv_outlets} is 
    $$a_{ijk}^{ts}\sigma_{ijr}^{ts} + \sum_{k' \in \K_j: k' > k} (a_{ijk'}^{ts} - a_{ijk}^{ts})x^{t}_{jk'} = a_{ijk}^{ts} + (a_{ij\bar{k}}^{ts} - a_{ijk}^{ts})x^{t}_{j\bar{k}} = a_{ij\bar{k}}^{ts}.$$
    And for $k\ge\bar{k}$, the right-hand side of \eqref{ddvv_outlets} becomes $a_{ijk}^{ts}$ with $a_{ijk}^{ts} \ge a_{ij\bar{k}}^{ts}$, so \eqref{ddvv_outlets} is valid $\forall k$.
\end{itemize}
\end{proof}
The previous valid inequalities are of special relevance because, apart from strengthening the bound of the linear relaxation of the problem, they allow us to relax the integrality constraints on the $\sigma$ variables:
\begin{prop} \label{prop:asignacionrelajada}
The integrality constraints \eqref{SL_sigmabinaria} can be relaxed in formulation \eqref{SL} if we include valid inequalities \eqref{ddvv_outlets}. 
\end{prop}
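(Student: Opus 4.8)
The plan is to show that, once the inequalities \eqref{ddvv_outlets} are present, fixing the location variables $\boldsymbol{x}$ to any binary vector turns the block of constraints that determines the $w$- and $\sigma$-variables into a linear assignment problem whose integrality comes for free. Since the objective \eqref{SL_OF} depends on $\boldsymbol{\sigma}$ and $\boldsymbol{w}$ only through the coverage variables $\boldsymbol{z}$ via \eqref{SL_zZeroLineal}, this will suffice to conclude that relaxing \eqref{SL_sigmabinaria} does not alter the set of optimal $(\boldsymbol{x},\boldsymbol{z})$.

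First I would fix a binary $\boldsymbol{x}$ feasible for \eqref{SL_budgettemporal}--\eqref{SL_xcrecienteagregadas}, which fixes every $u_{ij}^{ts}$ through \eqref{SL_coveringUpperBoundX}. The key step is to observe that for such an $\boldsymbol{x}$ the inequalities \eqref{ddvv_outlets} make the McCormick bound tight in the right way. If location $j$ hosts the (by \eqref{SL_outletssiexistej}, unique) open type $\bar{k}$, so that $u_{ij}^{ts}=a_{ij\bar{k}}^{ts}$, then instantiating \eqref{ddvv_outlets} at $k=\bar{k}$ makes the trailing sum vanish (every $x^{t}_{jk'}$ with $k'>\bar{k}$ is zero) and yields exactly $w_{ijr}^{ts}\le u_{ij}^{ts}\sigma_{ijr}^{ts}$; if $j$ is not open, then $u_{ij}^{ts}=0$ and \eqref{SL_rest_ulinear_cotasuperiorw2} together with $w\ge 0$ force $w_{ijr}^{ts}=0=u_{ij}^{ts}\sigma_{ijr}^{ts}$. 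Hence $w_{ijr}^{ts}\le u_{ij}^{ts}\sigma_{ijr}^{ts}$ in every case, and since the objective pushes the $w$'s upward through \eqref{SL_zZeroLineal}, at optimum $w_{ijr}^{ts}=u_{ij}^{ts}\sigma_{ijr}^{ts}$.

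Substituting this identity reduces the per-$(i,t,s)$ block to maximizing $\sum_{j,r}\lambda_{ir}u_{ij}^{ts}\sigma_{ijr}^{ts}$ subject to \eqref{SL_sumasigmaj}--\eqref{SL_sumasigmar}, i.e.\ the LP relaxation of a linear assignment problem. Its constraint matrix is totally unimodular (as already noted in the proof of Proposition \ref{prop:binivelaunnivel}), so an optimal vertex is integral whenever the right-hand sides are, and the continuous and binary optima of this block coincide. To handle the coupling through $z_i^{ts}$ on the right-hand sides of \eqref{SL_sumasigmaj}--\eqref{SL_sumasigmar}, I would use the homogeneity of the block: writing $\sigma=z_i^{ts}\tilde{\sigma}$ shows that the largest attainable value of $\sum_{j,r}\lambda_{ir}w_{ijr}^{ts}$ is exactly $z_i^{ts}\,U_i^{ts}$, with $U_i^{ts}$ the integral optimum of the assignment problem. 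Constraint \eqref{SL_zZeroLineal} then reads $T_i^{ts}z_i^{ts}\le z_i^{ts}U_i^{ts}$, so a positive $z_i^{ts}$ is feasible precisely when $U_i^{ts}\ge T_i^{ts}$, exactly as in the binary model, and maximizing drives each $z_i^{ts}$ to $0$ or $1$.

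The main obstacle is this last coupling: because the $\sigma$-budgets in \eqref{SL_sumasigmaj}--\eqref{SL_sumasigmar} are bounded by a possibly fractional $z_i^{ts}$ rather than by $1$, total unimodularity does not apply verbatim, and one must either rescale by $z_i^{ts}$ as above or argue directly that an optimal $z_i^{ts}$ is integral. Once this is settled, relaxing \eqref{SL_sigmabinaria} to $\sigma_{ijr}^{ts}\in[0,1]$ enlarges the feasible region without creating any new $(\boldsymbol{x},\boldsymbol{z})$, so the two models share the same optimal $(\boldsymbol{x},\boldsymbol{z})$, which completes the argument.
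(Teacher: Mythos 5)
Your proof is correct and follows essentially the same route as the paper: the decisive step in both is that instantiating \eqref{ddvv_outlets} at the unique open type $k_j$ collapses the trailing sum and yields $w_{ijr}^{ts}\le u_{ij}^{ts}\sigma_{ijr}^{ts}$, after which total unimodularity of the assignment polytope guarantees that fractional $\sigma$ cannot push $\sum_{j,r}\lambda_{ir}w_{ijr}^{ts}$ above the true $U_i^{ts}$. Your rescaling $\sigma=z_i^{ts}\tilde{\sigma}$ to handle the $z$-dependent right-hand sides of \eqref{SL_sumasigmaj}--\eqref{SL_sumasigmar} is a welcome extra precision, since the paper's proof only treats $z_i^{ts}\in\{0,1\}$ explicitly.
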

\begin{proof}
Let us prove, for fixed $i\in \I$, $t\in\T$, $s\in\S$, that the maximum value that the sum $\sum_{j \in \J} \sum_{r \in \J}\lambda_{ir} w_{ijr}^{ts}$ attains in formulation \eqref{SL} with inequalities \eqref{ddvv_outlets} is bounded by the value that the total level of attraction $U^{ts}_i$ takes in the MINLP \eqref{SingleLevelNoLineal}.

If $z^{ts}_i=0$, then \eqref{SL_sumasigmaj}-\eqref{SL_sumasigmar} imply $\boldsymbol{\sigma}=0$, so the assignment is integer and $U^{ts}_i=0$. As for $z^{ts}_i=1$, let $\bar{x}$ be a feasible (integer) solution of the first-level problem, i.e., an integer vector satisfying \eqref{SL_budgettemporal}-\eqref{SL_xcrecienteagregadas}, and fixed values $\bar{u}^{ts}_{ij}$. 

For a given $j\in \J$, if $\sum_{k\in \K_j} x^t_{jk} = 0$, then by constraints \eqref{SL_rest_ulinear_cotasuperiorw2} and \eqref{SL_coveringLowerBound0} it holds $w^{ts}_{ijr} = 0$. Otherwise, let $k_j \in \K_j$ be the unique $k$ such that $\bar{x}^t_{jk_j}=1$. Then, constraint $k_j$ from set \eqref{ddvv_outlets} is
\begin{equation*}
    w^{ts}_{ijr} \le a_{ijk_j}^{ts}\sigma_{ijr}^{ts} = \bar{u}^{ts}_{ij}\sigma_{ijr}^{ts}, 
\end{equation*}
\noindent and therefore $\sum_{j \in \J} \sum_{r \in \J}\lambda_{ir} w_{ijr}^{ts} \le \sum_{j \in \J} \sum_{r \in \J}\lambda_{ir} u^{ts}_{ij}\sigma_{ijr}^{ts} = U^{ts}_i$ in \eqref{SingleLevelNoLineal}.
\end{proof}

\begin{prop}
For each $i \in \I$, $t\in \T$, $s\in \S$, the set of inequalities
\begin{equation} \label{ddvv_outletsSumada}
   \sum_{r\in \J} w_{ijr}^{ts} \le \sum_{r\in \J}a_{ijk}^{ts}\sigma_{ijr}^{ts} + \sum_{k' \in \K_j:\atop k' > k} (a_{ijk'}^{ts} - a_{ijk}^{ts})x^{t}_{jk'}, \quad \forall  j\in \J, k\in \K_j,
\end{equation}
\noindent is valid for formulation \eqref{SL}. 
\end{prop}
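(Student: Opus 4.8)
The plan is to prove \eqref{ddvv_outletsSumada} directly over the integer-feasible points of \eqref{SL}, fixing arbitrary $i\in\I$, $t\in\T$, $s\in\S$, $j\in\J$ and $k\in\K_j$, and bounding the left-hand side $\sum_{r\in\J}w^{ts}_{ijr}$ by the right-hand side. The first observation is that a naive summation of the already-proven inequality \eqref{ddvv_outlets} over $r$ would reproduce the first term $\sum_{r\in\J} a^{ts}_{ijk}\sigma^{ts}_{ijr}$ correctly, but would multiply the correction term $\sum_{k'>k}(a^{ts}_{ijk'}-a^{ts}_{ijk})x^t_{jk'}$ by $|\J|$, yielding a strictly weaker bound. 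Hence the sharper inequality \eqref{ddvv_outletsSumada}, in which the correction term appears only once, must exploit the assignment structure: constraints \eqref{SL_sumasigmar} together with $z^{ts}_i\le 1$ (from \eqref{SL_zcontinua}) force $\sum_{r\in\J}\sigma^{ts}_{ijr}\le 1$, so at most one index $r$ carries a nonzero $\sigma$.

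Next I would split into cases exactly as in the proof of \eqref{ddvv_outlets}. If no facility is open at $j$, i.e. $\sum_{k'\in\K_j}x^t_{jk'}=0$, then \eqref{SL_coveringUpperBoundX}, \eqref{SL_rest_ulinear_cotasuperiorw2} and \eqref{SL_coveringLowerBound0} give $w^{ts}_{ijr}=0$ for all $r$, so the left-hand side vanishes while the right-hand side is nonnegative (using $a\ge 0$ and that $a$ is non-decreasing in $k$). Otherwise, by \eqref{SL_outletssiexistej} there is a unique open type $\bar k$ with $x^t_{j\bar k}=1$, whence $u^{ts}_{ij}=a^{ts}_{ij\bar k}$. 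Here I further distinguish $k\ge\bar k$ and $k<\bar k$. In the former case all $x^t_{jk'}$ with $k'>k$ vanish, so the correction term is zero, and one checks term-by-term that $w^{ts}_{ijr}\le a^{ts}_{ijk}\sigma^{ts}_{ijr}$: trivially when $\sigma^{ts}_{ijr}=0$ (then $w^{ts}_{ijr}=0$ by \eqref{SL_rest_ulinear_cotasuperiorz2} and \eqref{SL_coveringLowerBound0}), and via $w^{ts}_{ijr}\le u^{ts}_{ij}=a^{ts}_{ij\bar k}\le a^{ts}_{ijk}$ when $\sigma^{ts}_{ijr}=1$; summing over $r$ closes this case.

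The crux is the case $k<\bar k$, where the correction term equals $a^{ts}_{ij\bar k}-a^{ts}_{ijk}$. Here I would invoke $\sum_{r\in\J}\sigma^{ts}_{ijr}\le 1$ to argue that either all $\sigma^{ts}_{ijr}=0$, in which case the left-hand side is $0$ and the right-hand side is $a^{ts}_{ij\bar k}-a^{ts}_{ijk}\ge 0$; or a single index $\bar r$ has $\sigma^{ts}_{ij\bar r}=1$ (and $w^{ts}_{ijr}=0$ for $r\neq\bar r$), so that $\sum_{r\in\J} w^{ts}_{ijr}=w^{ts}_{ij\bar r}\le u^{ts}_{ij}=a^{ts}_{ij\bar k}$, which coincides with the right-hand side $a^{ts}_{ijk}\cdot 1+(a^{ts}_{ij\bar k}-a^{ts}_{ijk})=a^{ts}_{ij\bar k}$. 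I expect this last case to be the main obstacle, precisely because it is where the single (rather than $|\J|$-fold) copy of the correction term must be justified; the essential ingredient is the bound $\sum_{r\in\J}\sigma^{ts}_{ijr}\le 1$ coming from the assignment constraints, which guarantees the correction is charged at most once.
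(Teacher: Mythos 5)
Your proof is correct and follows essentially the same route as the paper's: both arguments dispose of the degenerate cases ($\sum_{k'}x^t_{jk'}=0$ or $\boldsymbol{\sigma}$ identically zero at $j$) first, then use that $\sum_{r\in\J}\sigma^{ts}_{ijr}\le 1$ and $\sum_{k'\in\K_j}x^t_{jk'}\le 1$ single out a unique pair $(\bar r,\bar k)$, and finally split on $k<\bar k$ versus $k\ge\bar k$ to show the right-hand side collapses to $a^{ts}_{ij\bar k}$ or to $a^{ts}_{ijk}\ge a^{ts}_{ij\bar k}$. Your added remark explaining why summing \eqref{ddvv_outlets} over $r$ would only yield a weaker inequality is a nice motivation but does not change the substance of the argument.
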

\begin{proof}
For a fixed $j\in \J$, we distinguish two cases. If $\sum_{r\in \J}\sigma_{ijr}^{ts}=0$ or $\sum_{k' \in \K_j} x^{t}_{jk'}=0$, then $\sum_{r\in \J} w_{ijr}^{ts}=0$ and the right-hand side of \eqref{ddvv_outletsSumada} is non-negative, so the constraints are valid. In other case, $\sum_{r\in \J}\sigma_{ijr}^{ts}=\sum_{k \in \K_j} x^{t}_{jk}=1$, so there exist $\bar{r}$ and $\bar{k}$ such that $\sigma_{ij\bar{r}}^{ts}=x^{t}_{j\bar{k}}=1$. In this case, the right-hand side of \eqref{ddvv_outletsSumada} needs to be an upper bound of $\sum_{r\in \J} w_{ijr}^{ts} = w_{ij\bar{r}}^{ts}$ for all $k\in \K_j$. Again we distinguish two cases. For $k<\bar{k}$,   $\sum_{r\in \J}a_{ijk}^{ts}\sigma_{ijr}^{ts} + \sum_{k' \in \K_j: k' > k} (a_{ijk'}^{ts} - a_{ijk}^{ts})x^{t}_{jk'}=a_{ijk}^{ts} + (a_{ij\bar{k}}^{ts}-a_{ijk}^{ts})$. And for $k\ge\bar{k}$,  the right-hand side of \eqref{ddvv_outletsSumada} is equal to $a_{ijk}^{ts}$, an upper bound on $a_{ij\bar{k}}^{ts}$.
\end{proof}

\begin{prop}
The following family of inequalities
\begin{equation} \label{eq:SL_xcrecientedesagregadasVVII}    
\dsum_{k' \in \K_j: \atop k' \geq k} x_{jk'}^{t-1} \le \dsum_{k' \in \K_j: \atop k' \geq k} x_{jk'}^{t},\quad  \forall j \in \J, t \in \T \setminus\{1\}, k\in \K_j, 
\end{equation}
\noindent is valid for formulation \eqref{SL} and dominates constraints \eqref{SL_xcrecienteagregadas}.
\end{prop}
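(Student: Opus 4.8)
The plan is to prove the two claims separately: that \eqref{eq:SL_xcrecientedesagregadasVVII} is satisfied by every feasible solution of \eqref{SL} (validity), and that this family is a \emph{disaggregation} of the single aggregated constraint \eqref{SL_xcrecienteagregadas}, which makes it at least as strong and strictly tighter in the linear relaxation (domination).

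For validity, I would fix $j\in\J$, $t\in\T\setminus\{1\}$ and $k\in\K_j$ and reason at an arbitrary feasible integer point. Constraint \eqref{SL_outletssiexistej} ensures that at most one facility is open at $j$ in any period, so the partial sum $\sum_{k'\in\K_j:\,k'\ge k} x_{jk'}^{t-1}$ equals $1$ exactly when the unique facility open at $t-1$ has type at least $k$, and $0$ otherwise; the same holds at period $t$. Hence both sides of \eqref{eq:SL_xcrecientedesagregadasVVII} are indicators of the event ``a facility of type at least $k$ is open.'' Constraint \eqref{SL_xcrecienteagregadas} forces the type of the facility at $j$ (equal to $\sum_{k\in\K_j} k\,x_{jk}^{\tau}$, or $0$ if none is open) to be non-decreasing from $t-1$ to $t$. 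Therefore, if the left indicator equals $1$, the facility open at $t$ has type at least that of $t-1$, hence at least $k$, so the right indicator is $1$ as well; and if the left indicator is $0$, the inequality is trivial. This covers all cases.

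For the domination, I would sum \eqref{eq:SL_xcrecientedesagregadasVVII} over all $k\in\K_j$ for fixed $j$ and $t$. The double-counting identity $\sum_{k=1}^{|\K_j|}\sum_{k'\in\K_j:\,k'\ge k} x_{jk'}^{\tau}=\sum_{k'\in\K_j} k'\,x_{jk'}^{\tau}$, in which each variable $x_{jk'}^{\tau}$ is counted once for every $k\le k'$, shows that this aggregation is exactly \eqref{SL_xcrecienteagregadas}. Since \eqref{SL_xcrecienteagregadas} is thus a nonnegative (indeed unit) combination of members of \eqref{eq:SL_xcrecientedesagregadasVVII}, any point satisfying the new family also satisfies \eqref{SL_xcrecienteagregadas}, so the disaggregated cuts are at least as strong. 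To show they are strictly stronger, I would exhibit a fractional point feasible for the relaxation with \eqref{SL_xcrecienteagregadas} that violates one member of \eqref{eq:SL_xcrecientedesagregadasVVII}: taking $|\K_j|=2$ and setting $x_{j1}^{t-1}=1$ and $x_{j2}^{t}=0.6$ with all remaining components zero satisfies \eqref{SL_outletssiexistej} and \eqref{SL_xcrecienteagregadas} (since $1\le 1.2$) but violates the $k=1$ instance of \eqref{eq:SL_xcrecientedesagregadasVVII} (since $1\le 0.6$ fails).

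No step presents a genuine obstacle; the argument is routine. The only points deserving care are the double-counting identity in the aggregation and the precise notion of domination, namely that the disaggregated constraints imply the aggregated one while the converse fails over the fractional polytope.
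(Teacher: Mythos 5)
Your proof is correct and follows essentially the same route as the paper's: validity from the at-most-one-type constraint \eqref{SL_outletssiexistej} together with the monotonicity constraint \eqref{SL_xcrecienteagregadas}, and dominance by observing that summing \eqref{eq:SL_xcrecientedesagregadasVVII} over $k\in\K_j$ recovers \eqref{SL_xcrecienteagregadas} via the double-counting identity. Your explicit fractional point certifying strictness is a small addition beyond what the paper records, but the argument is the same.
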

\begin{proof}
The validity of \eqref{eq:SL_xcrecientedesagregadasVVII} is straightforward using \eqref{SL_outletssiexistej} and \eqref{SL_xcrecienteagregadas}. To prove their dominance over \eqref{SL_xcrecienteagregadas}, it suffices to note that, for a fixed $j$ and $t$, the corresponding constraint from \eqref{SL_xcrecienteagregadas} is obtained by summing up the subset of constraints from \eqref{eq:SL_xcrecientedesagregadasVVII} for all $k\in \K_j$.
\end{proof}

As previously stated, the $\sigma$ variables are just auxiliary variables used to compute the value of $U^{ts}_i$. Therefore, we can develop inequalities that bound the values of the assignment variables as long as the maximum value that variable $U^{ts}_i$ attains for a given solution of the first-level problem remains unaltered. The following two propositions are developed with this purpose:
\begin{prop} \label{prop:eqSL_wacotadasporyVVII}
For each $i \in \I$, $t\in \T$, $s\in \S$, the set of inequalities
\begin{equation} \label{eq:SL_wacotadasporyVVII}   
\sum_{r\in \J} \sigma^{ts}_{ijr} \le \sum_{k\in \K_j} x^t_{jk}, \quad \forall j\in \J, 
\end{equation}
\noindent is valid for formulation \eqref{SL}, in the sense that it does not eliminate feasible solutions in terms of the variables $x,z$.
\end{prop}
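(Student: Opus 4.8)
The plan is to read the claimed inequality through the assignment structure and then exploit the weak, feasibility-preserving notion of validity stated in the proposition. The left-hand side $\sum_{r\in\J}\sigma_{ijr}^{ts}$ counts the rank, if any, that facility $j$ receives, which by \eqref{SL_sumasigmar} is at most $z_i^{ts}\le 1$; the right-hand side $\sum_{k\in\K_j}x_{jk}^t$ equals $1$ exactly when a facility is open at $j$ and $0$ otherwise, by \eqref{SL_outletssiexistej}. Thus \eqref{eq:SL_wacotadasporyVVII} simply forbids assigning a rank to a \emph{closed} location. Since this genuinely removes some integer assignments, the goal is not to show that every feasible point satisfies it, but rather that for every $(x,z)$ appearing in a feasible solution of \eqref{SL} one can re-choose the auxiliary variables $\sigma,w$ so that the full solution, keeping the \emph{same} $(x,z)$, remains feasible and additionally satisfies \eqref{eq:SL_wacotadasporyVVII}.

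First I would record the structural fact that makes closed locations inert. Fix $i\in\I$, $t\in\T$, $s\in\S$ and take any feasible solution $(x,z,u,\sigma,w)$ of \eqref{SL}. If $\sum_{k\in\K_j}x_{jk}^t=0$ for some $j\in\J$, then \eqref{SL_coveringUpperBoundX} gives $u_{ij}^{ts}=0$, and \eqref{SL_rest_ulinear_cotasuperiorw2} together with \eqref{SL_coveringLowerBound0} force $w_{ijr}^{ts}=0$ for every $r\in\J$. Hence a closed location contributes nothing to the sum $\sum_{j\in\J}\sum_{r\in\J}\lambda_{ir}w_{ijr}^{ts}$ that drives the covering constraint \eqref{SL_zZeroLineal}.

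Next I would build the modified solution explicitly: keep $x,z,u,w$ unchanged and set $\hat{\sigma}_{ijr}^{ts}:=0$ whenever $\sum_{k\in\K_j}x_{jk}^t=0$, and $\hat{\sigma}_{ijr}^{ts}:=\sigma_{ijr}^{ts}$ otherwise. The verification is then a constraint-by-constraint check. Constraint \eqref{SL_zZeroLineal} is untouched, because the $w$-sum is unchanged (we only zeroed $\sigma$-entries at locations where $w$ already vanished). Constraints \eqref{SL_sumasigmaj} and \eqref{SL_sumasigmar} survive since we only decreased entries of $\sigma$, and constraint \eqref{SL_rest_ulinear_cotasuperiorz2} holds because at open locations nothing changed while at closed locations both sides equal $0$. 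Finally, for \eqref{eq:SL_wacotadasporyVVII}: at a closed location both sides vanish, while at an open location $\sum_{r\in\J}\hat{\sigma}_{ijr}^{ts}=\sum_{r\in\J}\sigma_{ijr}^{ts}\le z_i^{ts}\le 1=\sum_{k\in\K_j}x_{jk}^t$ by \eqref{SL_sumasigmar} and \eqref{SL_outletssiexistej}. Since $(x,z)$ are left intact, no feasible $(x,z)$ is eliminated.

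The main obstacle is conceptual rather than computational. Because \eqref{eq:SL_wacotadasporyVVII} does remove integer assignments, a standard ``valid for all feasible points'' argument is false here, and one must instead certify that the discarded assignments are redundant. The crux is precisely the inertness observation: ranking closed locations can neither raise the attraction sum in \eqref{SL_zZeroLineal} nor affect $z_i^{ts}$, so stripping those assignments is harmless. Everything after that observation is the routine constraint check sketched above.
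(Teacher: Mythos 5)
Your proposal is correct and follows essentially the same route as the paper's (much terser) proof: at open locations the inequality is already implied by \eqref{SL_sumasigmar} and \eqref{SL_outletssiexistej}, and at closed locations the $w$-variables vanish, so the corresponding $\sigma$-entries can be reset to zero without affecting $(x,z)$ or any other constraint. Your version simply spells out the explicit reassignment $\hat{\sigma}$ and the constraint-by-constraint check that the paper leaves implicit.
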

\begin{proof}
If $\sum_{k\in \K_j} x^t_{jk}=1$, then \eqref{eq:SL_wacotadasporyVVII} is redundant. And for $\sum_{k\in \K_j} x^t_{jk}=0$, then $\sum_{r\in\J} w^{ts}_{ijr}=0$ regardless of the value of $\sum_{r\in \J} \sigma^{ts}_{ijr}$, so the later sum can be set to zero.
\end{proof}

Finally, we derive some preprocessing of the problem that allows to eliminate variables and constraints for particular cases of $\boldsymbol{\lambda_i}$.
\begin{prop} \label{prop:preproNoOrdenCompleto}
If $\lambda_{ir}=0$, then $\sigma^{ts}_{ijr}$ need not be defined in formulation \eqref{SL} $\forall i\in \I$, $t\in \T$, $s\in \S$, $j\in \J$.
\end{prop}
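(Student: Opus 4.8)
The plan is to show that, for any optimal solution of \eqref{SL}, every variable $\sigma^{ts}_{ijr}$ attached to a position $r$ with $\lambda_{ir}=0$ can be fixed to zero without altering the optimal value nor the optimal $(x,z)$. The enabling structural fact is that the relaxation \eqref{SL} no longer enforces a complete assignment: the bounds \eqref{SL_sumasigmaj}--\eqref{SL_sumasigmar} are inequalities of the form $\le z^{ts}_i$ (inherited from the relaxation in Proposition~\ref{prop:binivelaunnivel}), so partial matchings are feasible, and this is precisely what allows the zero-weight positions to be discarded.

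First I would identify every constraint of \eqref{SL} in which $\sigma^{ts}_{ijr}$, for the fixed $r$ with $\lambda_{ir}=0$, actually appears: the coverage constraint \eqref{SL_zZeroLineal} through the product $w^{ts}_{ijr}=u^{ts}_{ij}\sigma^{ts}_{ijr}$, the two assignment bounds \eqref{SL_sumasigmaj}--\eqref{SL_sumasigmar}, and the McCormick constraints \eqref{SL_rest_ulinear_cotasuperiorz2}--\eqref{SL_rest_ulinear_cotasuperiorw2}. The key observation is that in \eqref{SL_zZeroLineal} the term $w^{ts}_{ijr}$ is weighted by $\lambda_{ir}=0$, so it contributes nothing to the right-hand side that is used to certify coverage.

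Next, starting from any feasible solution of \eqref{SL}, I would construct a modified solution by setting $\sigma^{ts}_{ijr}=0$ and $w^{ts}_{ijr}=0$ for all $j\in\J$ at that fixed $r$, leaving all remaining variables untouched, and then check feasibility constraint by constraint. The right-hand side of \eqref{SL_zZeroLineal} is unchanged because the zeroed summands carried weight $\lambda_{ir}=0$; constraint \eqref{SL_sumasigmaj} for this $r$ collapses to $0\le z^{ts}_i$; each constraint \eqref{SL_sumasigmar} only loses a nonnegative summand on its left-hand side and therefore continues to hold; and \eqref{SL_rest_ulinear_cotasuperiorz2}--\eqref{SL_rest_ulinear_cotasuperiorw2} reduce to $0\le(\cdot)$. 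Since the objective \eqref{SL_OF} depends only on $z$, its value is preserved. Hence an optimal solution with $\sigma^{ts}_{ijr}=0$ always exists, and these variables, together with the associated $w^{ts}_{ijr}$, may be removed from the formulation for every $i\in\I$, $t\in\T$, $s\in\S$, $j\in\J$.

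The verification is routine; the only point genuinely requiring care is to confirm that zeroing these variables cannot lower the attainable right-hand side of the coverage constraint \eqref{SL_zZeroLineal}. This is guaranteed exactly because $\lambda_{ir}=0$ annihilates their contribution there, while the one-sided ($\le$) nature of the assignment bounds ensures that dropping a matched entry can never render any other constraint infeasible.
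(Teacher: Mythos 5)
Your proposal is correct and follows essentially the same reasoning the paper invokes (by reference to Proposition~\ref{prop:eqSL_wacotadasporyVVII}): the only place $\sigma^{ts}_{ijr}$ affects coverage is through the term $\lambda_{ir}w^{ts}_{ijr}$ in \eqref{SL_zZeroLineal}, which vanishes when $\lambda_{ir}=0$, while the one-sided assignment bounds \eqref{SL_sumasigmaj}--\eqref{SL_sumasigmar} allow the variable to be fixed to zero. Your version is merely more explicit than the paper's one-line argument, checking each affected constraint directly.
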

\begin{proof}
The result follows using a similar reasoning to that of Proposition \ref{prop:eqSL_wacotadasporyVVII}.
\end{proof}

Proposition \ref{prop:preproNoOrdenCompleto} allows us to obtain a simplified model when there is no need to order all the partial attractions of the customers because only a subset of them take part in the computation of the total level of attraction.

\section{Second solution approach: Benders Decomposition} \label{sec:Benders}
In this section, we propose a Benders-like decomposition approach to solve model \eqref{SingleLevelNoLineal}. The standard Benders recipe consists of projecting out all the continuous variables (i.e., all the variables except for vector $x$) and the associated constraints in \eqref{SingleLevelNoLineal}. However, we follow a different approach. Recall that for fixed values of the location variables $x$, our problem is decomposable by customer $i$, time period $t$ and scenario $s$. Since there is only one customer decision variable $z^{ts}_i$ per subproblem and they take part in the objective function, we leave variables $x$ and $z$ in the master problem and project out the variables and constraints associated to the assignment problem used to characterize the OMf. This translates to solving the master problem

\begin{subequations} \label{MasterProblem}
\begin{align} 
{\rm (MP)} \quad \max\quad & \dsum_{t\in \T} \dsum_{i \in \I} n_i^t \frac{1}{|\S|} \dsum_{s \in \S} z_i^{ts} \label{MasterProblem_OF}\\
\text{\rm s.t.}\quad & 
\eqref{BL1_budgettemporal}-\eqref{BL1_xbinaria}, \label{MasterProblem_MainConstraints}\\
\quad & \B^{ts}_i(x,z)\ge 0, \quad \forall i \in \I, t\in \T, s\in \S,  \label{MasterProblem_rest_z0}\\
& z_i^{ts} \in [0,1], \quad \forall i \in \I, t\in \T, s\in \S, \label{MasterProblem_zcontinuas}
\end{align}
\end{subequations}
\noindent where $\B^{ts}_i(x,z)$ represents the Benders concave function bounding variable $z^{ts}_i$ by the total level of attraction $U^{ts}_i$ given by the OMf assignment problem. For each $i$, $t$, $s$ and at each iteration, $\B^{ts}_i(\bar{x}, \bar{z})$ is a feasibility cut associated to the dual of each assignment subproblem for a fixed solution $(\bar{x},\bar{z})$ of the relaxation of the master problem \eqref{MasterProblem}: 
\begin{equation} \label{BendersSubproblem_FeasibilityForm}
 {\rm (SUB)}^{ts}_i \quad  \max \left\{ 0: \eqref{BL1_covering},  \eqref{SingleLevelNoLineal_rest_z0}-\eqref{SingleLevelNoLineal_sigmacontinuas} \right\}.
\end{equation}

Instead of solving the dual of problems \eqref{BendersSubproblem_FeasibilityForm}, we seek for a different normalization of the Benders cuts that appears naturally in our problem. For that, we follow the reasoning of the proof of Proposition \ref{prop:binivelaunnivel} and exploit the fact that a solution of the master \eqref{MasterProblem} is feasible if and only if it satisfies constraint $T_i^{ts}z^{ts}_i \le U_i^{ts}z^{ts}_i$. The latter constraint is non-linear, but it is linear (and hence concave) for fixed values of $U$. Furthermore, since $U_i^{ts}$ is the total level of attraction associated to the location of the facilities, it can be seen as a concave function of $x$: $U_i^{ts}(x)$. Therefore, we can approximate the non-linear constraints by linear (outer approximation) cuts that are generated and added on the fly to feasible (possibly non-integer) solutions of the master problem. Thus, for fixed values of the location variables $\bar{x}$, $U_i^{ts}$ can be overestimated by a supporting hyperplane at $\bar{x}$ and the following linear cut can be obtained:
\begin{multline} 
T_i^{ts}z^{ts}_i \le U_i^{ts}(x)z^{ts}_i \le \left(U_i^{ts}(\bar{x}) + \sum_{j\in \J}\sum_{k\in \K_j} \bar{s}_{jk} (x^t_{jk} - \bar{x}^t_{jk})\right)z^{ts}_i \le \\
\le  U_i^{ts}(\bar{x})z^{ts}_i + \sum_{j\in \J}\sum_{k\in \K_j} \left[\bar{s}_{jk} (x^t_{jk} - \bar{x}^t_{jk})\right]^+.
\end{multline}
The latter cut is known as a generalized Benders' cut \citep{geoffrion1972}, and $\bar{s}_{jk} \in \partial U^{ts}_i(\bar{x})$ is any supergradient of $U^{ts}_i(x)$ at $\bar{x}$. In a similar spirit to \cite{fischetti2017}, we explain in the following how we can compute the values of $\bar{s}_{jk}$ using the Lagrangian function. 

To this end, $U^{ts}_i(x)$ can be bounded by the objective value of problem \eqref{BL1_OF3}-\eqref{BL1_sigmabinary}. Using \eqref{BL1_covering} to replace the values of $u^{ts}_{ij}$ in terms of $x$, the objective function \eqref{BL1_OF3} becomes:
\begin{equation} \label{eq:OB3_ureemplazada}
    \max_{\sigma^{ts}_i} \quad \sum_{j\in\J} \sum_{r\in\J} \lambda_{ir} \sum_{k\in \K_j}a_{ijk}^{ts}x_{jk}^t \sigma^{ts}_{ijr}
\end{equation}
We linearize the objective function defining a new set of assignment variables $\sigma^{ts}_{ijkr}:=x^t_{jk}\sigma^{ts}_{ijr}$ $\forall i\in I$, $t \in\T$, $s\in\S$, $j,r\in\J$, $k\in\K_j$. Intuitively, $\sigma^{ts}_{ijkr}=1$ if and only if $x^t_{jk}=1$ and $u^{ts}_{ij}=a^{ts}_{ijk}$ is the $r$-th greatest partial attraction for customer $i$ at time period $t$ in scenario $s$. Using these new set of variables, model \eqref{BL1_OF3}-\eqref{BL1_sigmabinary} is reformulated as the following LP (barring subscripts and superscripts $i$, $t$, $s$ to ease notation):
\begin{subequations} \label{SUB}
\begin{align}
    \max_{\sigma}\quad & \sum_{j \in \J}\sum_{r \in \J} \sum_{k \in \K_j}\lambda_r a_{jk} \sigma_{jkr} \label{SUB_OF}\\
    \text{s.t.}\quad & \dsum_{j \in \J} \sum_{k \in \K_j}\sigma_{jkr} \le 1,\quad \forall r \in \J,  \label{SUB_sumasigmajk}\\
    & \dsum_{r \in \J} \sum_{k \in \K_j} \sigma_{jkr} \le 1,\quad \forall j \in \J,  \label{SUB_sumasigmark}\\
    &\dsum_{r\in \J} \sigma_{jkr} \le x_{jk},\quad \forall j \in \J, k\in \K_j \label{SUB_sumasigmar} \\    
    & \sigma_{jkr} \ge 0, \quad \forall j,r\in \J, k\in\K_j. \label{SUB_csigmacontinua}
\end{align}
\end{subequations}
Note that this linearization has many more variables than the one proposed in Section~\ref{sec:MILP}. However, these variables are to be projected out, and this linearization has the advantage of having a much simpler structure, since it is a linear assignment problem. In the same spirit, observe also that constraints \eqref{SUB_sumasigmark} are dominated by \eqref{SUB_sumasigmar} because any feasible solution of (MP) satisfies $\sum_{k\in \K_j} x_{jk} \le 1$. However, we decided to keep them in the model to provide a solution of its dual easier to understand.

Let $(\bar{x},\bar{z},\bar{u})$ be a solution of the master problem \eqref{MasterProblem} and consider fixed customer $i$, time period $t$ and scenario $s$. Consider an optimal solution $\sigma_{jkr}^*$ of \eqref{SUB}, and let $\gamma_r^*$, $\delta_j^*$, $\eta_{jk}^*$ be nonnegative optimal dual variables for each set of constraints \eqref{SUB_sumasigmajk}-\eqref{SUB_sumasigmar}. Then the Lagrangian function of $U(x)$ at $\bar{x}$ in $\sigma_{jkr}^*$, $\gamma_r^*$, $\delta_j^*$, $\eta_{jk}^*$ is:
\begin{multline}
    \sum_{j \in \J}\sum_{r \in \J} \lambda_r a_{jk}\sigma^*_{jkr} + \sum_{r\in \J} \gamma^*_r (1 -\sum_{j\in \J}\sum_{k\in \K_j} \sigma^*_{jkr}) + \\
    + \sum_{j\in \J} \delta^*_j (1 -\sum_{r\in \J}\sum_{k\in \K_j} \sigma^*_{jkr}) + \sum_{j \in \J}\sum_{k \in \K_j} \eta^*_{jk} (\bar{x}_{jk} - \sum_{r\in \J} \sigma^*_{jkr}),
\end{multline}
\noindent so $\bar{s}_{jk}$ depends exclusively on the dual values $\eta_{jk}$: $\bar{s}_{jk} = \eta^*_{jk}$
and the generalized Benders' cut reads
\begin{equation} \label{linearOAcuts}
Tz \le \left(\sum_{j \in \J}\sum_{r \in \J} \lambda_r a_{jk}\sigma^*_{jkr}\right)z + \sum_{j\in \J}\sum_{k\in \K_j} \left[\eta^*_{jk} (x_{jk} - \bar{x}_{jk})\right]^+.
\end{equation}
Rather than solving the dual of problem \eqref{SUB}, we have derived specialized primal and dual algorithms to obtain the optimal values of the dual variables for any integer vector $\bar{x}$. In this way, we provide a fast inclusion of cuts that are numerically accurate. First, we introduce the dual problem of \eqref{SUB}:
%Assigning variables $\gamma_r$ to \eqref{SUB_sumasigmajk}, $\delta_j$ to \eqref{SUB_sumasigmark} and $\eta_{jk}$ to \eqref{SUB_sumasigmar}, the dual of problem \eqref{SUB} is
\begin{subequations} \label{SUBDUAL}
\begin{align}
    \min\quad & \sum_{r \in \J} \gamma_r + \sum_{j\in \J} \delta_j + \sum_{j\in \J} \sum_{k \in \K_j} x_{jk} \eta_{jk} \label{SUBDUAL_OF}\\
    \text{s.t.}\quad &  \gamma_r + \delta_{j} + \eta_{jk} \ge \lambda_ra_{jk},\quad \forall j,r \in \J, k\in\K_j.\label{SUBDUAL_sigma} 
\end{align}
\end{subequations}
Next, we introduce the algorithms to solve \eqref{SUB} and \eqref{SUBDUAL} for fixed $i,t,s$.
\begin{algorithm}[H]
\caption{[Primal Algorithm]}\label{alg:BendersPrimal}
Input: Integer solution $(\bar{x}, \bar{z},\bar{u})$ of problem \eqref{MasterProblem}. \\
Output: Optimal solution $(\sigma_{jkr}^*)$  of problem \eqref{SUB}.
\begin{algorithmic}[1]
\Require Ordering $\tau: \J \rightarrow \J$ such that $\bar{u}_{\tau(1)} \ge \dots \ge \bar{u}_{\tau(|\J|)}$.
%\Ensure $y = x^n$%\State \Comment{This is a comment}
%\State [Primal Phase]
\For{$j \in \J$} 
    \For{$r \in \J$} 
        \If{$j == \tau(r)$}
            \State $\bar{k}_j \gets k$ if $\bar{x}_{jk}=1$, $\bar{k}_j \gets 0$ if $\sum_{k\in \K_j}\bar{x}_{jk}=0$
            \For{$k \in \K_j$}
                \If{$k == \bar{k}_j$}
                    \State $\sigma^*_{jkr} \gets 1$
                \Else
                    \State $\sigma^*_{jkr} \gets 0$
                \EndIf
            \EndFor            
        \EndIf
    \EndFor
\EndFor
\end{algorithmic}
\end{algorithm}

\begin{algorithm}[H]
\caption{[Dual Algorithm]}\label{alg:BendersDual}
Input: Integer solution $(\bar{x}, \bar{z}, \bar{u})$ of problem \eqref{MasterProblem}.  \\
Output: Optimal solution $({\gamma}_r^*, \delta_{j}^*, \eta_{jk}^*)$ of problem \eqref{SUBDUAL}.
\begin{algorithmic}[1]
\Require Ordering $\tau: \J \rightarrow \J$ such that $\bar{u}_{\tau(1)} \ge \dots \ge \bar{u}_{\tau(|\J|)}$.
%\State [Dual Phase]
\For{$r \in \J$} 
    \State $\gamma^*_r \gets \dsum_{r'=r}^{|\J|-1} \left(\lambda_{r'} - \lambda_{r'+1}\right) \bar{u}_{\tau(r')} + \lambda_{|\J|}\bar{u}_{\tau(|\J|)}$
\EndFor
\For{$j \in \J$} 
    \State $\delta^*_j \gets \dsum_{r' = \tau^{-1}(j)}^{|\J|-1} \lambda_{r'+1}\left( \bar{u}_{\tau(r')} - \bar{u}_{\tau(r'+1)}\right)$
\EndFor
\For{$j \in \J$}
    \State $\bar{k}_j \gets k$ if $\bar{x}_{jk}=1$, $\bar{k}_j \gets 0$ if $\sum_{k\in \K_j}\bar{x}_{jk}=0$
    \For{$k \in \K_j$}
        \If{$k \le \bar{k}_j$}  
            \State $\eta^*_{jk} \gets 0$
        \Else
            \State $r^*_{jk} \gets \min \left\{|\J|, \left\{r \in \J : \bar{u}_{\tau(r)} < a_{jk}  \right\} \right\}$
            \State $\eta^*_{jk} \gets \lambda_{r^*_{jk}}a_{jk} - \gamma^*_{r^*_{jk}} - \delta^*_j$ 
        \EndIf
    \EndFor
\EndFor
\end{algorithmic}
\end{algorithm}

\begin{theorem} \label{tma:AlgoritmoBenders}
Algorithm \eqref{alg:BendersPrimal} (resp.\ \eqref{alg:BendersDual}) provides an optimal solution of formulation \eqref{SUB} (resp.\ \eqref{SUBDUAL}) for a given integer vector $\bar{x}$. 
\end{theorem}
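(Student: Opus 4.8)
The plan is to prove both optimality claims at once through linear programming duality: I will verify that the primal iterate $\sigma^*$ returned by Algorithm \eqref{alg:BendersPrimal} is feasible for \eqref{SUB}, that the triple $(\gamma^*,\delta^*,\eta^*)$ returned by Algorithm \eqref{alg:BendersDual} is feasible for \eqref{SUBDUAL}, and that the two objective values coincide; weak duality then certifies optimality of each. Throughout I abbreviate $\bar u_{(r)}:=\bar u_{\tau(r)}$ for the $r$-th largest partial attraction and $\rho_j:=\tau^{-1}(j)$ for the rank of facility $j$, recalling that $\bar u_j=a_{j\bar k_j}$ when $\bar k_j\ge 1$ and $\bar u_j=0$ otherwise.

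Primal feasibility is immediate from the construction: Algorithm \eqref{alg:BendersPrimal} sets $\sigma^*_{jkr}=1$ exactly when $j=\tau(r)$ and $k=\bar k_j$, so each rank and each facility receive at most one unit and $\sum_{r}\sigma^*_{jkr}\le x_{jk}$ holds because the only activatable type is the installed one; the objective evaluates to $\sum_{j,r,k}\lambda_r a_{jk}\sigma^*_{jkr}=\sum_{r\in\J}\lambda_r\bar u_{(r)}$, i.e.\ the OMf value $U$. The crux of the argument, and the step I expect to be hardest, is dual feasibility. Two identities drive everything: the telescoping $\gamma^*_r-\gamma^*_{r+1}=(\lambda_r-\lambda_{r+1})\bar u_{(r)}$, read off directly from the definition of $\gamma^*$; and the per-facility identity $\gamma^*_{\rho_j}+\delta^*_j=\lambda_{\rho_j}\bar u_{(\rho_j)}$, obtained by collapsing $\sum_{r'=\rho_j}^{|\J|-1}\big[(\lambda_{r'}-\lambda_{r'+1})\bar u_{(r')}+\lambda_{r'+1}(\bar u_{(r')}-\bar u_{(r'+1)})\big]+\lambda_{|\J|}\bar u_{(|\J|)}$, in which each bracket equals $\lambda_{r'}\bar u_{(r')}-\lambda_{r'+1}\bar u_{(r'+1)}$ and telescopes. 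Together with $\lambda$ non-increasing and $\bar u_{(1)}\ge\cdots\ge\bar u_{(|\J|)}$, the telescoping identity makes $\gamma^*\ge 0$ and $\delta^*\ge 0$ transparent.

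To certify $\gamma^*_r+\delta^*_j+\eta^*_{jk}\ge\lambda_r a_{jk}$ for \emph{every} triple $(j,r,k)$---not only the basic ones---I split on whether $k\le\bar k_j$ or $k>\bar k_j$. When $k\le\bar k_j$ we have $\eta^*_{jk}=0$ and $a_{jk}\le\bar u_{(\rho_j)}$, so by the per-facility identity the inequality reduces to $\gamma^*_r-\gamma^*_{\rho_j}\ge(\lambda_r-\lambda_{\rho_j})\bar u_{(\rho_j)}$, which I obtain by writing each side as a sum of telescoped increments and comparing them termwise, using $\bar u_{(r')}\ge\bar u_{(\rho_j)}$ for $r'<\rho_j$ and $\bar u_{(r')}\le\bar u_{(\rho_j)}$ for $r'\ge\rho_j$. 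When $k>\bar k_j$, with $r^*_{jk}$ the first rank whose attraction drops below $a_{jk}$, the inequality reduces to $\gamma^*_r-\gamma^*_{r^*_{jk}}\ge(\lambda_r-\lambda_{r^*_{jk}})a_{jk}$, proved by the same comparison, now exploiting $\bar u_{(r')}\ge a_{jk}$ for $r'<r^*_{jk}$ and $\bar u_{(r')}<a_{jk}$ for $r'\ge r^*_{jk}$; the nonnegativity $\eta^*_{jk}\ge 0$ in this case follows from the per-facility identity together with the bound $\bar u_{(r')}\le a_{jk}$ for $r'\ge r^*_{jk}$.

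Finally, the per-facility identity closes the objective comparison: summing $\gamma^*_{\rho_j}+\delta^*_j=\lambda_{\rho_j}\bar u_{(\rho_j)}$ over $j$ and reindexing $\sum_j\gamma^*_{\rho_j}=\sum_r\gamma^*_r$ yields $\sum_r\gamma^*_r+\sum_j\delta^*_j=\sum_r\lambda_r\bar u_{(r)}$, while $\eta^*_{j\bar k_j}=0$ annihilates the term $\sum_{j,k}x_{jk}\eta^*_{jk}$ of the dual objective, since only installed types carry $x_{jk}=1$. Hence the dual objective equals the primal objective $\sum_r\lambda_r\bar u_{(r)}$, and weak duality delivers optimality of both solutions. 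The delicate points I anticipate are the degenerate ties among the $\bar u_{(r)}$---where $a_{jk}=\bar u_{(\rho_j)}$ may occur with $k>\bar k_j$, forcing $r^*_{jk}>\rho_j$---and the uninstalled facilities with $\bar k_j=0$ (for which $\delta^*_j=0$); in both situations one checks directly that $\eta^*_{jk}\ge 0$ and the dual constraint persist, and neither perturbs the objective since the corresponding $x_{jk}$ vanish.
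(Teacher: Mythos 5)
Your proposal is correct and follows essentially the same route as the paper's proof: verify primal feasibility of $\sigma^*$, dual feasibility of $(\gamma^*,\delta^*,\eta^*)$ via the telescoping structure of $\gamma^*$ and the identity $\gamma^*_{\tau^{-1}(j)}+\delta^*_j=\lambda_{\tau^{-1}(j)}\bar u_{\tau(\tau^{-1}(j))}$, and then match the two objective values to conclude by duality. Your packaging of the dual-feasibility check as a single termwise comparison of telescoped increments (rather than the paper's three-case chain of rearrangement inequalities) is a cosmetic reorganization of the same argument, and you additionally make explicit the nonnegativity of $\eta^*_{jk}$, which the paper asserts but does not spell out.
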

\begin{proof}
    The proof can be found in \ref{app:tmaAlgoritmoBenders}.
\end{proof}
Using the notation from Algorithms \ref{alg:BendersPrimal} and \ref{alg:BendersDual} and simplifying the value of $\eta^*$, the Benders' cut introduced for $i\in \I$, $t\in \T$, $s\in \S$, to cut out infeasible solutions of (MP) is:
\begin{equation} \label{eq:BendersCut}
    T_i^{ts} z^{ts}_i \le \left(\sum_{j \in \J} \lambda_{i\tau^{-1}(j)} \bar{u}^{ts}_{ij}\right) z^{ts}_i + \sum_{j\in \J}\sum_{k\in \K_j: \atop k>\bar{k}^t_j} \left(\lambda_{ir^*_{jk}} a^{ts}_{ijk} - \lambda_{i\tau^{-1}(j)}\bar{u}^{ts}_{ij} \right) x^t_{jk},
\end{equation}
\noindent where $\bar{k}^t_j=0$ if $\sum_{k\in \K_j} \bar{x}^t_{jk}=0$, and $\bar{k}^t_j$ is the unique $k$ such that $\bar{x}^t_{jk}=1$ otherwise (it coincides with the definition of $\bar{k}_j$ in Algorithms \ref{alg:BendersPrimal} and \ref{alg:BendersDual}). Note that expression \eqref{eq:BendersCut} coincides with \eqref{linearOAcuts}. Indeed, $\eta^*_{jk}\bar{x}_{jk}=0$, since $\bar{x}^t_{jk}=1 \Rightarrow \eta^*_{jk}=0$. Furthermore, $\eta^*_{jk}=0$ $\forall j\in \J$, $k\in \K_j$ such that $k\le \bar{k}_j$, and $\eta^*_{jk} = \lambda_{r^*_{jk}}a_{jk} - \gamma^*_{r^*_{jk}} - \delta^*_j$ is simplified in the second sum of the above expression for the remaining $k\in \K_j$.

\section{Computational study} \label{sec:EstudioComputacional}

In this section we report results from computational experiments that empirically show our contribution to the OCMCLP. 
We use two different data sets, one of a synthetic character, where the parameters are randomly generated, and another one based on the real data set provided by \citet{anjos2022arxiv}. The first one is designed to show and analyze the computational improvement implied by the methodology developed in the paper. The real data set is used to test the performance of our approach in a real application of the problem studied in the literature, specifically, the location of charging stations for electric vehicles. 
Throughout the section, we refer to these sets as \texttt{Syn-Data} and \texttt{Real-Data}, respectively.

In these experiments, we use \texttt{SL} to denote the MILP formulation \eqref{SL}, \texttt{VI} for the MILP formulation incorporating valid inequalities \eqref{ddvv_cotautilidad}, \eqref{ddvv_outlets} (and hence, with $\sigma \in [0,1]$ due to Proposition \ref{prop:asignacionrelajada}), \eqref{eq:SL_xcrecientedesagregadasVVII} and \eqref{eq:SL_wacotadasporyVVII} and the preprocessing due to Proposition \ref{prop:preproNoOrdenCompleto} from Section \ref{sec:VVII}, and \texttt{B} for the Benders' Decomposition approach presented in Section \ref{sec:Benders}.

All experiments are run on a Linux-based server with CPUs clocking at 2.6 GHz, 8 threads and 32 gigabytes of RAM. The models are coded in Python 3.7 and we used Gurobi 9.5 as optimization solver. 

The rest of the section is organized as follows: Section \ref{sec:data} defines the parameters and the size of the two data sets used for the experiments; Section \ref{sec:performance} shows the computational results of the proposed models and approaches; finally, Section \ref{sec:casestudy} presents the case study on the location of charging stations for electric vehicles.

\subsection{Data}\label{sec:data}
The parameters and sizes of the instances used for the simulations are given in the following subsections. However, here we include the definition of some parameters that apply to both \texttt{Syn-Data} and  \texttt{Real-Data}. 

Specifically, the model for the parameter $a_{ijk}^{ts}$ representing the partial attraction is:
$$
a_{ijk}^{ts} = \hat{a}_{ijk}^{t} + \bar{a}_{ij}^{t} + \epsilon^{ts}_{ijk},\; \forall t\in \T, s \in \S, i \in \I, j \in \J, k \in \K_j,
$$
where $\hat{a}_{ijk}^{t}$ is  associated to the type of station $k$ placed in $j$; $\bar{a}_{ij}^t$ is related to the features of location $j$; and $\epsilon_{ijk}^{ts}$ is the error associated to scenario $s$. This definition is consistent with that given by \citet{anjos2022arxiv}, and is useful for the \texttt{Real-Data} case study. 

Furthermore, we have tested instances with different $\boldsymbol{\lambda}$-vectors to compare their impact on the computational performance of the different models considered and the solutions obtained. Although they can differ, in these experiments we have taken the same values of the $\boldsymbol{\lambda}$-vectors for all $i\in \I$, $t \in \T$, $s \in \S$ of each instance. Hence, $\boldsymbol{\lambda}$ is a $|J|$-dimensional vector, and the different values considered are: 
\begin{description}
    \item[Type \texttt{C}:] $\boldsymbol{\lambda} = (1,0,\ldots,0)$. This type is the standard one in the MCLP literature, where the total level of attraction corresponds to the maximum attraction of any open facility.
    \item[Type \texttt{G}:] $\boldsymbol{\lambda} = (1,\frac{1}{9},\frac{1}{27},0,\ldots,0)$. This is based on the assumption that the weights associated to the partial attractions decrease following a geometric rule, and it takes into account a maximum of three facilities to calculate the total level of attraction.
    \item[Type \texttt{K}:] $\boldsymbol{\lambda} = (1,1,0,\ldots,0)$. It models the total level of attraction as an aggregation of the partial attractions of the two most attractive facilities.
    \item[Type \texttt{L}:] $\boldsymbol{\lambda}= (1,\frac{1}{2},0,\ldots,0)$. For this type, the weights are assumed to decrease in a linear fashion, and only the two best facilities are considered by the customer.
\end{description}

\subsubsection{Synthetic data}

As for the sizes of the instances of the set \texttt{Syn-Data}, we consider three time periods, $|\T| = 3$, and we test two sets of scenarios, $|\S| \in \{5,10\}$.
The coordinates of the customer classes are generated uniformly and randomly in the square $[0,1]\times [0,1]$ \citep[these kinds of sets are frequently used in the location literature, see, e.g.,][among others]{revelle2008solving,cordeau2019benders,lin2021,baldomero2022upgrading}. The sizes chosen for the set of  customer classes are $|\I|\in \{20,30,40,50\}$, and we generate five instances of each size, so 20 instances in total. In the case of the set of candidate locations of the facilities, we uniformly and randomly generate two sets with sizes $|\J|\in \{10,30\}$, although we only run instances with $|\J| \leq |\I|$. Finally, four types of facilities can be installed at each location, that is, $|K_j| = 4$ $\forall j\in \J$.

The weight of each class $n_i^t$,  $\forall i\in \I, t \in \T$, is given uniformly at random in the interval $[0,1]$. The threshold is set to the same value for each customer class, time period and scenario, and three values are considered: $T_i^{ts} \in \{10,12,15\}$. The maximum budget is set at the same value for all periods, $b^t \in\{5,10\}$. The costs of opening different facilities are equal for each $t\in \T$ and $j\in \J$, but vary with $k \in \K_j$ following the function $c_{jk}^t = k+3$.

Finally, we define the three values that form the partial attractions, namely,
\begin{description}
\item[$\hat{a}_{ijk}^{t}$:] This value is defined  for each $k \in \K_j$ as $\frac{k}{2}$.
\item[$\bar{a}_{ij}^{t}$:] We calculate all the distances among customer classes $i \in \I$ and candidate locations $j \in \J$, and distribute them into four groups (determined by the quantiles Q1, Q2, and Q3 of the computed distances). 
If the distance between a customer class and a candidate location is strictly below Q1, the value of the parameter is set to 8; if it is in (Q1,Q2], then it is set to 4; if it is in (Q2,Q3], then it is fixed to 2; and for the distances strictly above Q3, the assigned value is 0. 
\item[$\epsilon^{ts}_{ijk}$:] It is a random value that follows a standard normal distribution.
\end{description}

Thus, a total of 1680 instances are tested in our computational experiments with the data set \texttt{Syn-Data}. Any interested reader can replicate these experiments by finding the value of all parameters in our online GitHub repository \citep{GitHubOASYS}.

\subsubsection{Real data}
In our case study, we use a data set provided by \citet{anjos2022arxiv} based on the city of Trois-Rivières, Québec, which is divided into 317 zones. The authors consider the centroids of each zone as customer classes $i$ with weights $n^t_i$ equal to the 10\% of the population of each zone for every time period. A network is generated with a node for each customer class and edges linking adjacent zones. In this case, the Euclidean distance between each centroid is set as the length of the edge. Additionally, a subset of 30 locations among the centroids is chosen as the set of candidate locations for the installation of the facilities.

To create the instances, we consider subsets of their set of customer classes of three different sizes, namely $|\I| \in \{100,200,317\}$, and we do likewise for the set of potential locations of facilities, $|\J| \in \{10,20,30\}$. For the subsets of $\I$, we take the classes with larger weights. As for the subsets of $\J$, we select the first 10 and 20 of the total set. Finally, we consider their short and long span, $|\T| \in \{4,10\}$, and we set the number of scenarios to $|\S| = 5$.

Moreover, \citet{anjos2022arxiv} define different types of charging stations according to the number of outlets they contain, i.e., the capacity of the station to charge vehicles simultaneously. This definition is consistent with the assumptions made in the paper, in the sense that the customer's attraction increases as the number of outlets in a facility does. We maintain the maximum number of outlets per station proposed by the authors, $|\K_j| = 6, \forall j \in \J$. The authors define separate costs for opening a facility and for increasing its number of outlets. In our case, these costs are incorporated in the value of the parameter $c$ as $c_{jk}^t = 100 + 50k$. The total budget for all $t \in \T$ is fixed to $b^t = 400$. In addition to the value of the threshold considered by the authors, $T_i^{ts}=4.5$, we also include instances with $T_i^{ts}=9$ $\forall i\in\I$, $t\in\T$, $s\in\S$. Finally, to define the partial attractions we also follow the authors' design:
\begin{align*}
\hat{a}_{ijk}^{t} & = 0.281k,\\ 
\bar{a}_{ij}^{t} & = 1.638 - 0.63d_{ij},\\
\epsilon^{ts}_{ijk} &= FT\xi^s + \zeta^s,
\end{align*}
\noindent for all $t\in \T, s \in \S, i \in \I, j \in \J, k \in \K_j$, and $d_{ij}$ representing the distance of the shortest path between $i$ and $j$. In the above, $F$ is a factor loading matrix, $T$ is a diagonal matrix, $\xi^s$ is a vector of IID random terms from a normal distribution with location zero and a scale of one, and $\zeta^s$ is a vector of IID random terms from a Gumbel distribution with location of zero and a scale of three. However, for our real-world case study, we adopt the approach proposed by the authors, where customers consider only the facilities located within a radius of ten kilometers, and they have no attraction for those outside the radius,  (i.e., $u_{ij}^{ts}=0$ for any pair $(i,j)$ whose distance is superior to ten kilometers). For more information about the definition of these parameters, see \citet{anjos2022arxiv}. 

Following the same scheme as in this work, we solve 20 independent and different instances on this data set by modifying only the random vectors $\xi$ and $\zeta$. As a result, the set \texttt{Real-Data} has a total of 2880 instances. These instances are also available in our GitHub repository \citep{GitHubOASYS}.

\subsection{Study on computational performance} \label{sec:performance}

This section is devoted to show the computational performance of the solution approaches described throughout the paper on a synthetic data set (\texttt{Syn-Data}), and a time limit of 1 hour (3600 seconds) is established for each approach.

First, Figure \ref{fig:performanceprofile} provides two plots that effectively compare the performance of the proposed approaches in this paper. In both plots, the solid line represents the MILP model (\texttt{SL}); the dashed line, the MILP model with valid inequalities (\texttt{VI}); and the dotted line, the Benders' decomposition based approach (\texttt{B}).
Figure \ref{fig:performancetime} depicts a performance profile of the percentage of instances solved to optimality within a computational time in seconds. A point in the figure with coordinates $(x,y)$ indicates that for $y\%$ of the instances, the instance was solved in less than $x$ seconds. 
It is noticeable that \texttt{B} outperforms the others, that is, the number of instances solved by \texttt{B}  before the time limit (around a 70\%) is the highest out of the three models. On the other hand, \texttt{SL} solves around a 25\% of the instances (the smallest ones) in less computational time compared to the other two models, although it is only able to solve to optimality about 35\% of the instances proposed. 
Similarly, Figure \ref{fig:performancegap} shows a performance profile of the percentage of instances with respect to the MIP Gap after one hour of computation time. Thus, a point in the graph with coordinates $(x,y)$ indicates that for $y\%$ of instances, the MIP gap is less than $x\%$. Clearly, the instances solved to optimality by a certain model have a MIP gap of $0\%$. Here we see the clear improvement of \texttt{B} with respect to the two models presented: It solves more instances to optimality and when the time limit is reached, the gap is much lower. This follows from the fact that, with a $20\%$ gap, we have less than $50\%$ of the instances for \texttt{SL}, while for \texttt{VI} it is $60\%$ and more than $75\%$ for \texttt{B}.
Note that there are a few instances that end up with a GAP greater than 100\%, but we did not include these outliers in the performance profile.

\begin{figure}[H]
\centering
\begin{subfigure}[b]{0.49\textwidth}
\centering
\includegraphics[scale = 0.65]{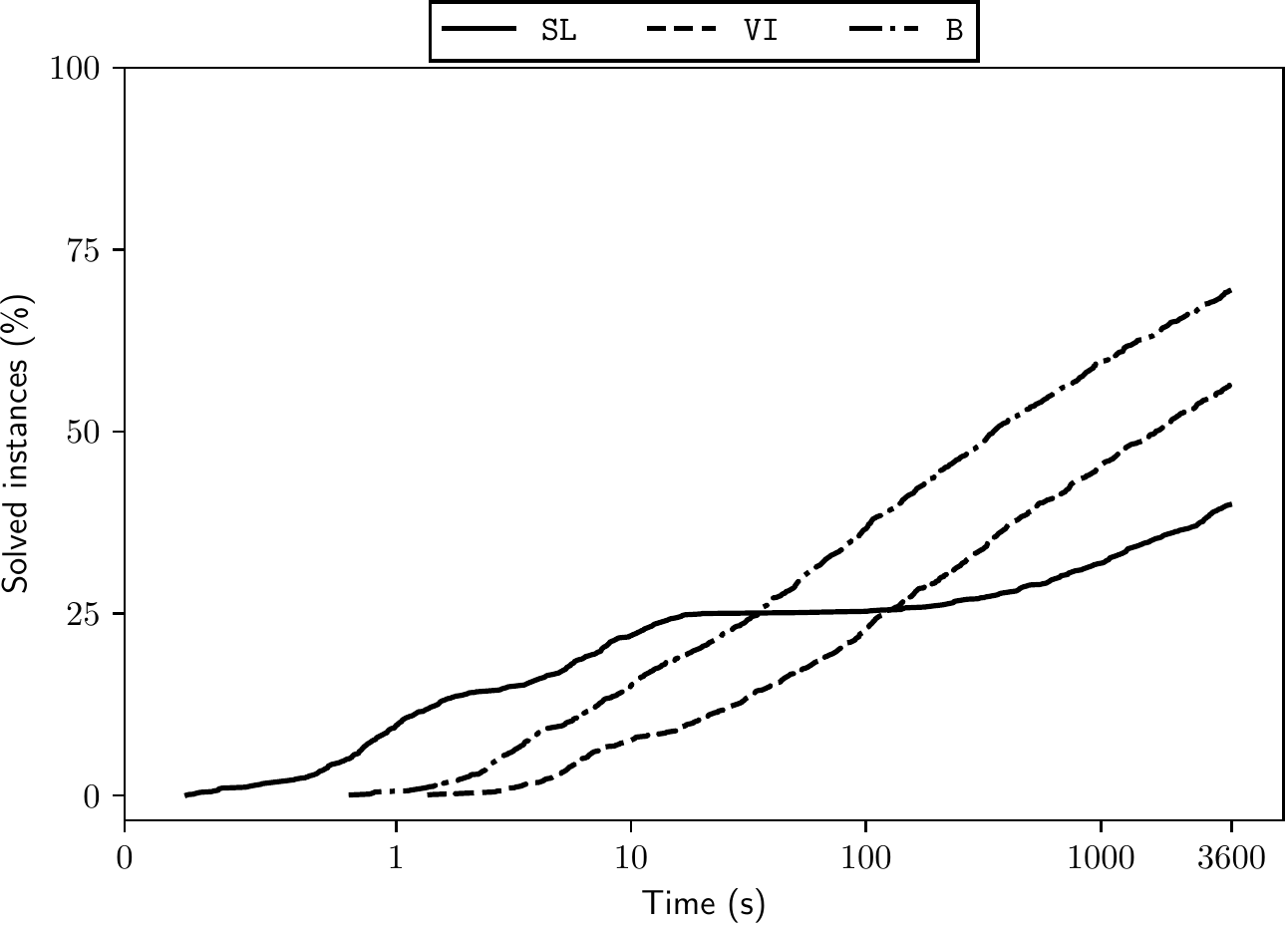}   
\caption{Performance profile of the percentage of solved instances as a function of time (using a logarithmic scale).}\label{fig:performancetime}
\end{subfigure}\hfill
\begin{subfigure}[b]{0.49\textwidth}
\centering
\includegraphics[scale = 0.65]{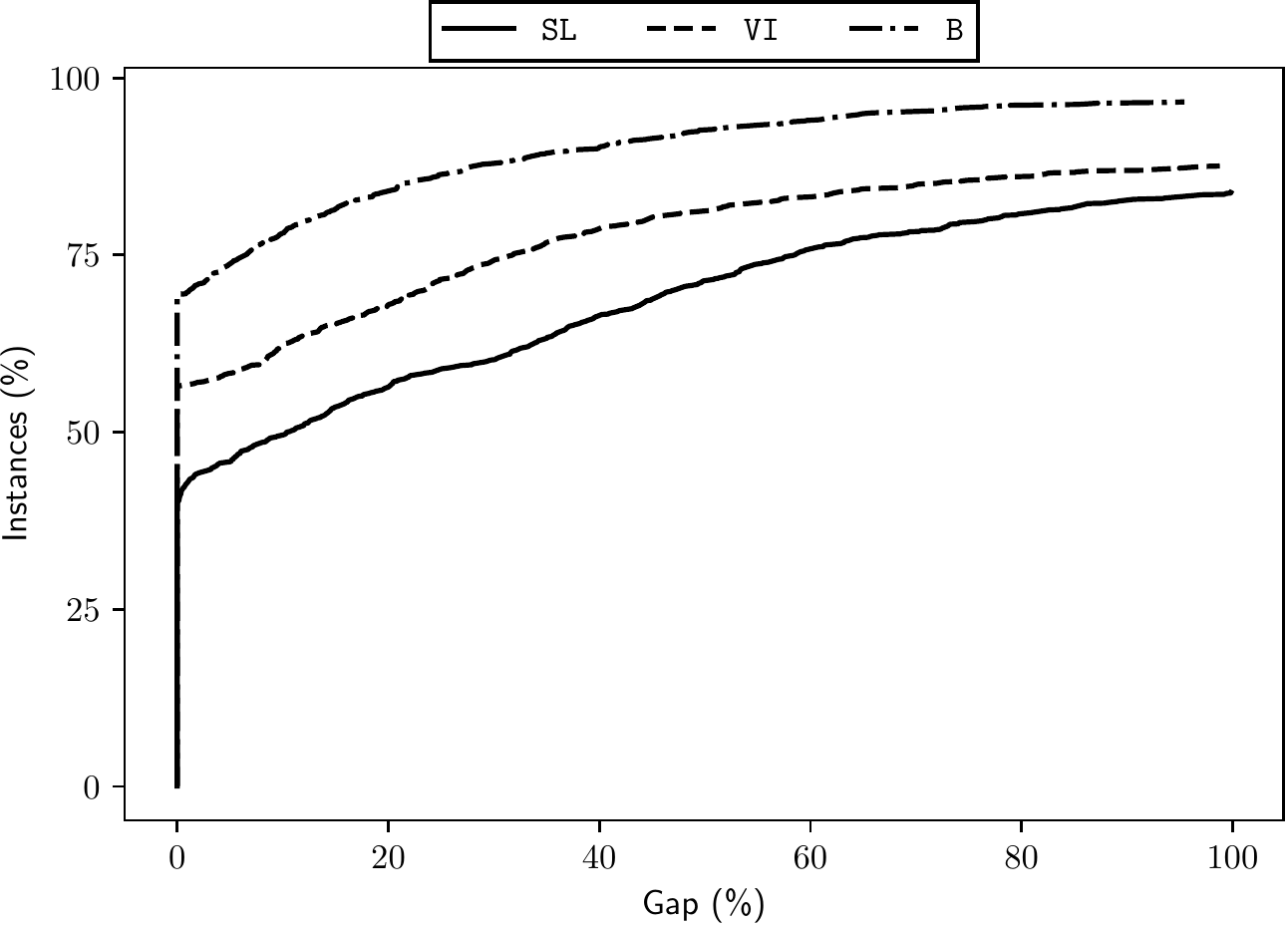}   
\caption{Performance profile of the percentage of instances as a function of the remaining MIP gap after a time limit of one hour.}
\label{fig:performancegap}
\end{subfigure}
\caption{Performance profiles.}
\label{fig:performanceprofile}
\end{figure}

With these plots, we illustrate that the solution approaches \texttt{VI} and \texttt{B} presented in this paper improve the initial formulation \texttt{SL}. Indeed, approach \texttt{B}  consistently outperforms the others, solving more instances in less time and returning smaller MIP gaps when it is not capable of solving the instance. 

The difference in performance associated to the size of the instances can be seen when we break down the solutions depending on the parameter values. In this line, the median computational time in seconds of the instances that finished within the time limit (\texttt{Time[s]}), the number of solved instances, and the median of the covered customers by period ($t$) for the instances solved with \texttt{B} are collated in Table \ref{t:lambopt} for various values of $\boldsymbol{\lambda}$ (types \texttt{C}, \texttt{G}, \texttt{K} and \texttt{L}) and different threshold values ($T$). There are 140 instances summarized per row.
In this table and the following ones, we have used the median of the data instead of the average to avoid the influence that the outliers may cause.

For the case when $\boldsymbol{\lambda}$ is set to \texttt{C}, which considers only the most useful facility, the \texttt{SL} model solves 100\% of the instances with very low computational times. However, for the rest of the $\boldsymbol{\lambda}$-vectors (where more than one facility is taken into account for the calculation of the total level of attraction), the \texttt{SL} model performs worse, giving lower percentages of solved instances and higher computational times compared to \texttt{VI} and \texttt{B}. In contrast, approach \texttt{B} shows a better computational performance in most cases, solving more instances and taking less time compared to \texttt{VI}. This suggests that approach \texttt{B} is more effective in handling the calculation of the total level of attraction when multiple facilities are involved. Thus, it can potentially offer improved computational performance when solving problems with general or varied $\boldsymbol{\lambda}$-vectors.

Note that the adjustment of the values given to the threshold and the $\boldsymbol{\lambda}$-vector has been made so that the percentage of customers covered ranges from 0\% to almost 100\%. This illustrates how a slight modification in these values may have a significant impact on the overall solutions reported in terms of covered customers. In turn, the selection of parameters affects the computational performance: extreme cases where either everyone or no one is covered are easier to solve than intermediate settings. The reason is that, in the extreme cases, the threshold value is so low or so high that the customers' decision is very little influenced by the location of the stations.

\begin{table}[H]
\begin{center}
    \begin{tabular}{ccrrrrrrcrrr}
    \toprule
  $T$ & $\boldsymbol{\lambda}$ & \multicolumn{3}{c}{\texttt{Time[s]}} & \multicolumn{3}{c}{ \texttt{\# Solved}} & & \multicolumn{3}{c}{\texttt{Cap. customer[\%]}}\\
 \cmidrule(lr){3-5}\cmidrule(lr){6-8} \cmidrule(lr){10-12}
 &  & \texttt{SL} &  \texttt{VI} & \texttt{B} & \texttt{SL} & \texttt{VI} & \texttt{B} & & $t=1$ & $t=2$ & $t=3$\\
  \hline
10 & \texttt{C} & 1.8 & 85.1 & 16.5 & 140 & 102 & 113 & & 20.0 & 38.2 & 52.2\\
   & \texttt{G} & 1355.2 & 919.1 & 331.1 & 20 & 54 & 82 & & 35.1 & 76.7 & 90.1\\
 & \texttt{K} & 1046.2 & 167.9 & 65.5 & 54 & 101 & 138 & & 46.5 & 91.9 & 98.4\\
 & \texttt{L} & 1089.4 & 495.3 & 197.6 & 48 & 75 & 119 & & 38.5 & 82.2 & 93.1\\
12 & \texttt{C} & 1.5 & 20.2 & 9.3 & 140 & 140 & 138 & & 1.1 & 2.9 & 4.2\\
 & \texttt{G} & 2359.8 & 1723.2 & 623.4 & 3 & 20 & 45 & & 14.3 & 46.4 & 67.0\\
 & \texttt{K} & 1272.6 & 260.9 & 177.6 & 32 & 88 & 118 & & 33.7 & 81.8 & 93.8\\
 & \texttt{L} & 1036.2 & 1029.8 & 348.1 & 12 & 45 & 66 & & 19.8 & 57.0 & 76.1\\
15 & \texttt{C} & 1.3 & 8.9 & 7.5 & 140 & 140 & 140 & & 0.0 & 0.0 & 0.0 \\
 & \texttt{G} & 1867.8 & 515.3 & 185.1 & 26 & 59 & 68 & & 0.8 & 8.5 & 18.1\\
 & \texttt{K} & 1756.9 & 844.9 & 260.5 & 8 & 66 & 66 & & 19.1 & 57.2 & 76.8\\
 & \texttt{L} & 671.3 & 281.3 & 225.3 & 48 & 59 & 74 & & 5.7 & 19.0 & 31.8\\
    \bottomrule
\end{tabular}
\caption{\texttt{Syn-Data}. The total number of instances per row is 140. Time limit equal to 3600 seconds. Median computational time of solved instances in seconds (\texttt{Time[s]}), number of instances solved by the different models for each threshold and $\boldsymbol{\lambda}$-value, and median number of covered customer classes (\texttt{Cap. customer[\%]}) per time period for each threshold and $\boldsymbol{\lambda}$-value.} 
\label{t:lambopt}
\end{center}
\end{table}

Table \ref{t:complexity} provides insight into the computational complexity of the problem for different number of scenarios ($|\S|$), instance sizes ($|\I|$), $\boldsymbol{\lambda}$-values, and the sizes of the set of candidate locations for the facilities ($|\J|$). For the sake of clarity, we include here only two data sets, $|\I| \in \{30, 50\}$. To see the results regarding the rest of the sizes, we refer the reader to \ref{app:syn}.
This table shows the median computational time in seconds only for the instances that have been solved to optimality within one hour (\texttt{Time[s]}), the number of solved instances between parentheses (\texttt{\# Solved}), and two GAP values for the unsolved instances: \texttt{MIPGap} $= \frac{|z_{bb}-z_{P}|}{z_{P}}\cdot 100$, and \texttt{FGap} $= \frac{|z_{bb} - z_{bP}|}{z_{bP}}\cdot 100$, where $z_{bb}$ is the best upper bound, $z_{P}$ is the incumbent value (i.e., the current best primal objective bound), and $z_{bP}$ is the best incumbent offered by any of the three approaches.
When none of the instances are solved within the time limit, we have written \texttt{TL} in the time column. There are 30 instances summarized per row. 

It is observed that approach \texttt{SL} struggles to solve instances with five scenarios, even with small-sized instances like those with 30 customer classes with a success rate of only 27.2\% for the $\boldsymbol{\lambda}$-values \texttt{G}, \texttt{K} and \texttt{L}. On the contrary, approach \texttt{B} performs well even with an increased number of customer classes. For instance, with 50 customer classes, a 63.3\% of the instances are solved for the same $\boldsymbol{\lambda}$-values \texttt{G}, \texttt{K} and \texttt{L}. 
For the simplest case $\boldsymbol{\lambda}$=\texttt{C}, it is observed that the inclusion of valid inequalities (approach \texttt{VI}) to the MILP model is not beneficial, since for $|\J|=30$ and for both values of $|\I|$, \texttt{SL} solves the 30 instances proposed while \texttt{VI} only solves 25. This suggests that the valid inequalities are of no use when a single station is taken into account in the computation of the total level of attraction. However, for the rest of the $\boldsymbol{\lambda}$-vectors \texttt{VI} clearly outperforms \texttt{SL}, (although \texttt{B} remains unbeaten).    

If we examine the results for ten scenarios, we observe that the slight increase in the number of scenarios already entails a deterioration in the computational performance of all the methods. This fact is particularly evident for specific $\boldsymbol{\lambda}$-values. For instance, approach \texttt{B} solves 22 (resp.\ 18) instances with five scenarios, $\boldsymbol{\lambda}$ set to \texttt{K}, 30 candidate facilities and 30 (resp.\ 50) customer classes. When the number of scenarios is increased to ten, the number of solved instances goes down to 14 (resp.\ 12). The case of $\boldsymbol{\lambda}$ set to \texttt{G} is more extreme, since none of the approaches is able to solve any instance for 30 facilities, even with 30 customer classes. 

\begin{table}[H]
\begin{center}
\small
    \begin{tabular}{cccrrrrrrrrrr}
    \toprule
$|\S|$ & $|\I|$ & $\boldsymbol{\lambda}$ & $|\J|$ & \multicolumn{3}{c}{\texttt{Time[s] (\#Solved)}} &  \multicolumn{3}{c}{\texttt{MIPGap[\%]}} & \multicolumn{3}{c}{\texttt{FGap[\%]}} \\ 
\cmidrule(lr){5-7}\cmidrule(lr){8-10}\cmidrule(lr){11-13}
&  &  &  & \texttt{SL} & \texttt{VI} & \texttt{B} & \texttt{SL} & \texttt{VI} & \texttt{B} & \texttt{SL} & \texttt{VI} & \texttt{B} \\
\hline
5 & 30 & \texttt{C} & 10 & 0.4 (30) & 6.5 (30)  & 2.5 (30)  & 0.0 & 0.0 & 0.0 & 0.0 & 0.0 & 0.0 \\
              &  &  & 30 & 3.6 (30) & 33.9 (25) & 17.5 (30)  & 0.0 & 22.0 & 0.0 & 0.0 & 22.0 & 0.0 \\
    &  & \texttt{G} & 10 & 1775.2 (11) & 652.2 (22) & 185.1 (29) & 35.6 & 13.3 & 1.2 & 35.5 & 13.2 & 1.2 \\
              &  &  & 30 & \texttt{TL} (0) & \texttt{TL} (0) & 1151.0 (6) & 104.5 & 77.6 & 50.1 & 94.9 & 57.5 & 50.1 \\
    &  & \texttt{K} & 10 & 1125.2 (20) & 86.4 (30) & 22.9 (30) & 22.9 & 0.0 & 0.0 & 22.9 & 0.0 & 0.0 \\
              &  &  & 30 & \texttt{TL} (0) & 1659.5 (18) & 160.6 (22) & 33.9 & 19.1 & 10.3 & 32.0 & 18.1 & 10.2 \\
    &  & \texttt{L} & 10 & 441.2 (18) & 351.0 (28) & 72.1 (30) & 12.0 & 8.2 & 0.0 & 12.0 & 8.2 & 0.0 \\
              &  &  & 30 & \texttt{TL} (0) & \texttt{TL} (0) & 378.7 (12) & 74.4 & 44.0 & 25.3 & 69.2 & 43.1 & 25.2 \\
& 50 & \texttt{C} & 10 & 0.9 (30) & 9.3 (30) & 4.0 (30) & 0.0 & 0.0 & 0.0 & 0.0 & 0.0 & 0.0 \\
            &  &  & 30 & 7.5 (30) & 99.0 (25) & 52.1 (27) & 0.0 & 36.2 & 2.9 & 0.0 & 36.2 & 2.9 \\
    &  & \texttt{G} & 10 & 3089.7 (4) & 1057.2 (16) & 577.7 (27) & 53.5 & 32.9 & 2.9 & 50.3 & 30.4 & 2.9 \\
              &  &  & 30 & \texttt{TL} (0) & \texttt{TL} (0) & 1709.7 (3) & 123.0 & 345.4 & 50.4 & 94.1 & 115 & 50.4 \\
    &  & \texttt{K} & 10 & 1507.9 (7) & 267.8 (30) & 145.3 (30) & 8.4 & 0.0 & 0.0 & 8.4 & 0.0 & 0.0 \\
              &  &  & 30 & \texttt{TL} (0) & 2634.0 (3) & 591.1 (18) & 39.0 & 17.9 & 11.1 & 37.0 & 17.9 & 11.1 \\
    &  & \texttt{L} & 10 & 1147.3 (11) & 638.9 (21) & 197.6 (29) & 23.9 & 19.7 & 4.0 & 23.9 & 19.7 & 4.0 \\
              &  &  & 30 & \texttt{TL} (0) & \texttt{TL} (0) & 1713.3 (7) & 76.9 & 52.3 & 31.1 & 63.3 & 48.6 & 31.1 \\
10 & 30 & \texttt{C} & 10 & 0.9 (30) & 11.7 (30) & 6.3 (30) & 0.0 & 0.0 & 0.0 & 0.0 & 0.0 & 0.0 \\
               &  &  & 30 & 6.7 (30) & 112.0 (25) & 49.2 (25) & 0.0 & 27.2 & 2.0 & 0.0 & 27.2 & 2.0 \\
    &  & \texttt{G} & 10 & 1890.4 (1) & 1210.5 (13) & 671.5 (20) & 56.3 & 27.3 & 14.8 & 51.1 & 27.3 & 14.8 \\
              &  &  & 30 & \texttt{TL} (0) & \texttt{TL} (0) & \texttt{TL} (0) & 126.7 & 219.0 & 47.0 & 103.8 & 39.8 & 47.0 \\
    &  & \texttt{K} & 10 & 2539.9 (9) & 355.5 (30) & 174.6 (28) & 14.9 & 0.0 & 14.5 & 13.8 & 0.0 & 14.3 \\
              &  &  & 30 & \texttt{TL} (0) & 3365.0 (2) & 475.2 (14) & 38.9 & 14.4 & 10.3 & 33.4 & 14.4 & 10.2 \\
    &  & \texttt{L} & 10 & 1548.3 (11) & 791.1 (18) & 665.3 (28) & 23.4 & 21.5 & 6.7 & 23.4 & 21.5 & 6.7 \\
              &  &  & 30 & \texttt{TL} (0) & \texttt{TL} (0) & 1151.3 (5) & 86.1 & 50.7 & 31.8 & 74.9 & 34.5 & 31.8 \\
& 50 & \texttt{C} & 10 & 1.6 (30) & 30.1 (30) & 14.7 (30) & 0.0 & 0.0 & 0.0 & 0.0 & 0.0 & 0.0 \\
            &  &  & 30 & 13.6 (30) & 215.2 (20) & 73.0 (18) & 0.0 & 44.3 & 8.6 & 0.0 & 44.2 & 8.6 \\
    &  & \texttt{G} & 10 & \texttt{TL} (0) & 1597.1 (5) & 338.6 (10) & 56.6 & 33.2 & 20.3 & 54.4 & 31.8 & 20.3 \\
              &  &  & 30 & \texttt{TL} (0) & \texttt{TL} (0) & \texttt{TL} (0) & 166.8 & 404.1 & 67.1 & 114.5 & 46.0 & 67.1 \\
    &  & \texttt{K} & 10 & 3128.0 (1) & 656.3 (22) & 299.3 (21) & 18.5 & 12.7 & 6.8 & 18.5 & 11.7 & 6.8 \\
              &  &  & 30 & \texttt{TL} (0) & \texttt{TL} (0) & 1184.4 (12) & 59.7 & 23.8 & 17.9 & 39.0 & 16.9 & 17.5 \\
    &  & \texttt{L} & 10 & 2723.4 (3) & 1421.0 (13) & 1191.6 (17) & 31.3 & 22.6 & 7.7 & 29.4 & 21.7 & 7.7 \\
              &  &  & 30 & \texttt{TL} (0) & \texttt{TL} (0) & 2659.9 (4) & 101.5 & 61.8 & 33.6 & 78.8 & 31.3 & 33.6 \\
    \bottomrule
\end{tabular}
\caption{\texttt{Syn-Data}. The total number of instances per row is 30. Time limit equal to 3600 seconds. Median computational time of solved instances in seconds (\texttt{Time[s]}), number of solved instances (\texttt{\#Solved}), median Gap between the incumbent solution and the best bound (\texttt{MIPGap}), and median Gap between the best solution found among the models and the model's best bound (\texttt{FGap}).} 
\label{t:complexity}
\end{center}
\end{table}

In terms of computational times, in the majority of the combinations of parameters the median  is under 1800s. Approach \texttt{B} provides the best solution times, with reductions of up to one order of magnitude with respect to \texttt{SL} or half the time compared to \texttt{VI} (for $|\I|$=50, $|\J|$=30 and $\boldsymbol{\lambda}$=\texttt{G}).

For the instances that are not solved to optimality within the time limit, approach \texttt{B} achieves low MIPGap values, typically remaining below a 10.6\% of gap for five scenarios and 17.0\% for ten scenarios, based on the instances analyzed. This implies that approach \texttt{B} effectively solves larger instances and can achieve good optimality gaps indicative of near-optimal solutions.

As for the instances with \texttt{MIPGap} higher than 100\%, this is due to the incumbent solution provided at the time limit being very close to zero. This results in a high percentage gap, but it is interesting to observe that the upper bound is similar among the three models even in cases where the incumbent is close to zero. This is shown in the column \texttt{FGap}, where we compare the bound of each model with respect to the best incumbent found. Thus, the models provide meaningful bounds even when the solution is not near-optimal within the time limit.

These findings indicate that approach \texttt{B} is more suitable for solving larger instances with five or ten scenarios, even with an increased number of customer classes  and candidate locations, and can achieve good solution quality with low optimality gaps. Therefore, approach \texttt{B} may be preferred for practical applications with larger instances, while approach \texttt{SL}  may be suitable for smaller instances with fewer scenarios, or when the calculation of the total level of attraction only involves a single facility (i.e., when $\boldsymbol{\lambda}=\texttt{C}$).

\subsection{Case study on the installation of charging stations for electric vehicles}\label{sec:casestudy}

We conclude this section by presenting a case study for increasing electric vehicle adoption through the placement of new charging stations in the city of Trois-Rivières, Québec, using the instances employed by \citet{anjos2022arxiv}. In this section, in light of the results discussed in the previous section, we use the approach with the best performance for each $\boldsymbol{\lambda}$-vector, namely, approach \texttt{SL} for \texttt{C}, and approach \texttt{B} for the rest of them. In this case study, the time limit is set to 8 hours (28800 seconds).

\begin{table}[H]
\begin{center}
    \begin{tabular}{ccrrrr}
    \toprule
    $|\J|$ & $|\I|$ & \multicolumn{2}{c}{$T = 4.5$} & \multicolumn{2}{c}{$T = 9$} \\ \cmidrule(lr){3-4}\cmidrule(lr){5-6}
 &  & \texttt{Time[s]} \texttt{(\# Solved)} & \texttt{MIPGap[\%]} & \texttt{Time[s]} \texttt{(\# Solved)} & \texttt{MIPGap[\%]} \\
\hline
10 & 100 & 261.9 (80) & 0.0 & 3963.4 (65) & 1.5 \\
 & 200 & 5076.2 (57) & 1.0 & 8.3 (20) & 4.9 \\
 & 317 & 53.0 (35) & 1.5 & 14.0 (20) & 6.1 \\
20 & 100 & 3752.6 (48) & 1.5 & 8.8 (20) & 9.5 \\
 & 200 & 122.5 (20) & 6.4 & 35.3 (20) & 16.0 \\
 & 317 & 455.3 (20) & 7.8 & 87.1 (20) & 17.8 \\
30 & 100 & 30.5 (21) & 2.9 & 16.5 (20) & 13.9 \\
 & 200 & 1260.6 (20) & 9.0 & 117.8 (20) & 20.8 \\
 & 317 & 8716.9 (20) & 10.1 & 1205.2 (20) & 22.4 \\
    \bottomrule
\end{tabular}
\caption{\texttt{Real-Data} Computational results with $|\T| = 4$. The total number of instances per row is 80. Time limit equal to 28800 seconds (8 hours). Median computational time of solved instances in seconds (\texttt{Time[s]}), number of solved instances (\texttt{\#Solved}), and median gap using the incumbent solution and the best bound (\texttt{MIPGap}).} 
\label{t:anjos-computational}
\end{center}
\end{table}

Table \ref{t:anjos-computational} summarizes the computational experience for larger instances, for different sizes of the set of customers ($|\I|$) and potential locations ($|\J|$), and for different values of the threshold ($T$). Each row summarizes 80 instances, that is, 20 instances for each $\boldsymbol{\lambda}$-value. The table shows the median computational time of solved instances in seconds (\texttt{Time[s]}), the total number of instances solved (\texttt{\#Solved}), and the median gap using the incumbent solution found and the best bound (\texttt{MIPGap}) when $|\T| = 4$. We refer the reader to \ref{app:real} for a summary of our computational results for $|\T | = 10$.

This table illustrates how the difficulty of the problem scales up when the size of the instance increases (compared to the sizes considered in \texttt{Syn-Data}). In fact, when only 20 instances out of 80 are solved, these 20 instances are the simplest ones, i.e., the ones with $\boldsymbol{\lambda} = \texttt{C}$. It is noteworthy that, for an threshold of 4.5, we were able to solve many instances in a relatively shorter time period despite the larger problem sizes of this case study (compared to the previous one). This is due to the fact that customers consider only facilities within a 10 km radius. As a result, some customers consider only one facility, while others consider the entire set of facilities. This modeling assumption is aligned with the realistic setting where customers prioritize facilities in the proximity to their location, and disregard those that are further away.

Next we examine how different $\boldsymbol{\lambda}$-values influence the optimal location of charging stations. For this purpose, we compare the solutions obtained assuming the cooperative $\boldsymbol{\lambda}$-values, \texttt{G}, \texttt{K} and \texttt{L}, with those given by the standard \texttt{C} from the literature. 
In Table \ref{t:deviation}, we depict the regret of locating the stations assuming $\boldsymbol{\lambda}$=\texttt{C} when another $\boldsymbol{\lambda}$-value should have been assumed instead.
This regret is calculated as a percentage deviation, $\texttt{deviation}_{\boldsymbol{\lambda}} = \frac{f_{\lambda}(x) - f_\lambda(x^*_\texttt{C})}{f_{\lambda}(x)}\cdot 100$, where $f_{\lambda}(x)$ represents the best objective value found when $\boldsymbol{\lambda}$ is used, and $f_\lambda(x^*_\texttt{C})$ represents the objective value when $\boldsymbol{\lambda}$ is used fixing the location decision to the one found by the \texttt{C}-model.

The results of Table \ref{t:deviation} are the average of these deviations 
and are given by instance size ($|\J|$ and $|\I|$), by lambda value ($\boldsymbol{\lambda} \in \{\texttt{G}, \texttt{K}, \texttt{L}\}$) and by threshold ($T$), only for $|\T| = 4$ (for the long span, see \ref{app:real}). 
For these averages of regrets, we have included only the instances such that \texttt{C} is solved to optimality, and averaged all the non negative regrets (since the regret is in fact non negative if the solutions reported are optimal). We also want to point out that, despite the number of instances not solved for the cooperative $\boldsymbol{\lambda}$-vectors, only around a 7.7\% of the regrets are negative, so the immense majority of the solutions reported represent an improvement over the standard case $\boldsymbol{\lambda}=\texttt{C}$.

This table confirms that assuming a cooperative approach in facility placement decisions has a significant impact on the resulting location decisions. If considering a cooperative setting for the total level of attraction did not affect the location of the stations, the deviations in the objective value would have been zero. However, as shown in Table \ref{t:deviation}, there are noticeable deviations when using cooperative models, such as a 3\% deviation in one case, even in cases where optimality is not achieved. When the threshold value is more \textit{challenging} and the customers' decision rule is cooperative, a planning that takes into account their cooperative behavior is more important. This is clear when comparing the regrets obtained for  $T = 4.5$ and $T = 9$.
%due to the size of the instance, models that consider cooperation still provide better solutions in general. 
%When reaching optimality, particularly in the smallest instances ($|\J|=10, |\I|=100$) with an threshold of 4.5, we observe smaller deviations of around 1\%, which is also a percentage to be taken into account.
%Cuando el threshold es más desafiante y los clientes tienen regla de decisión cooperativa, una planificación en la que se tenga en cuenta su decisión cooperativa adquiere más relevancia. Esto se deduce de los regrets obtenidos para el valor de $T = 9$ si comparamos con $T = 4$. 

\begin{table}[H]
\begin{center}
    \begin{tabular}{ccrrrrrr}
    \toprule
$|\J|$ & $|\I|$ & \multicolumn{3}{c}{$T = 4.5$} & \multicolumn{3}{c}{$T = 9$} \\ \cmidrule(lr){3-5}\cmidrule(lr){6-8}
 &  & \multicolumn{1}{c}{\texttt{G}} & \multicolumn{1}{c}{\texttt{K}} & \multicolumn{1}{c}{\texttt{L}} & \multicolumn{1}{c}{\texttt{G}} & \multicolumn{1}{c}{\texttt{K}} & \multicolumn{1}{c}{\texttt{L}} \\
 \hline
10 & 100 & 0.7 & 1.1 & 0.8 & 1.9 & 2.2 & 2.2 \\
   & 200 & 0.5 & 0.6 & 0.6 & 1.2 & 1.3 & 1.3 \\
   & 317 & 0.4 & 0.5 & 0.4 & 0.9 & 1.1 & 1.0 \\
20 & 100 & 0.5 & 0.8 & 0.6 & 1.6 & 2.5 & 2.0 \\
   & 200 & 0.4 & 0.7 & 0.5 & 1.0 & 1.6 & 1.4 \\
   & 317 & 0.2 & 0.4 & 0.3 & 0.5 & 1.2 & 0.6 \\
30 & 100 & 0.5 & 0.9 & 0.6 & 2.3 & 3.2 & 2.9 \\
   & 200 & 0.5 & 0.8 & 0.6 & 0.8 & 1.6 & 1.2 \\
   & 317 & 0.3 & 0.5 & 0.3 & 0.6 & 1.4 & 0.8 \\
    \bottomrule
\end{tabular}
\caption{\texttt{Real-Data} with $|\T| = 4$. The total number of instances per row is 20. Instances solved with \texttt{B}. Time limit equal to 28800 seconds (8 hours). Average of the regret between \texttt{C} and other $\boldsymbol{\lambda}$-values.} 
\label{t:deviation}
\end{center}
\end{table}

Finally, we conclude by showing an example in Figure \ref{fig:ex_TR} of how the distribution of the stations changes for different $\boldsymbol{\lambda}$-values. In the figure, we present a specific instance that is solved to optimality, with the following parameters: $|\I| = 317, |\J| = 10, |\T| = 4, |\S| = 5,$ and $T = 4.5$. The figure illustrates the last period, where the black dots represent the 317 centroids of each subregion in Trois-Rivières, Québec. The red stars represent the open charging stations, while the yellow stars depict the stations that remained closed. The size variations among the stars indicate different types: larger stars represent charging stations with a greater number of outlets. 

Upon closer inspection of the figure, when $\boldsymbol{\lambda} = \texttt{C}$, all the stations are open, with only one of them having a greater number of outlets. This proves that for a threshold value of 4.5, distributing the facilities throughout the region leads to a better solution. For other $\boldsymbol{\lambda}$-values, multiple facilities are considered when calculating the total level of attraction. The case \texttt{G} is similar to \texttt{C} because the weights assigned by customers to their second and third options are very low, so the model tends to favor again solutions where the stations are distributed throughout the territory.

The cases with $\boldsymbol{\lambda}$ equal to \texttt{K} and \texttt{L}, where two facilities are considered to compute the covering, result in similar solutions: they are the only two cases in which a station remains closed. However, the distinction lies in the fact that for $\boldsymbol{\lambda} = \texttt{K}$, one station with 4 outlets and two stations with 2 outlets are open. Conversely, when $\boldsymbol{\lambda} = \texttt{L}$, the solution consists in opening one station with 6 outlets while the rest only have 1 outlet. This difference occurs because for $\boldsymbol{\lambda}$=\texttt{L} the customers attribute more importance to their first option than to their second one. As a result, the model favors a more concentrated distribution by opening more attractive stations, i.e., stations with more outlets.

\begin{figure}[H]
\centering
\begin{subfigure}[b]{0.49\textwidth}
\centering
\fbox{\includegraphics[scale = 0.4]{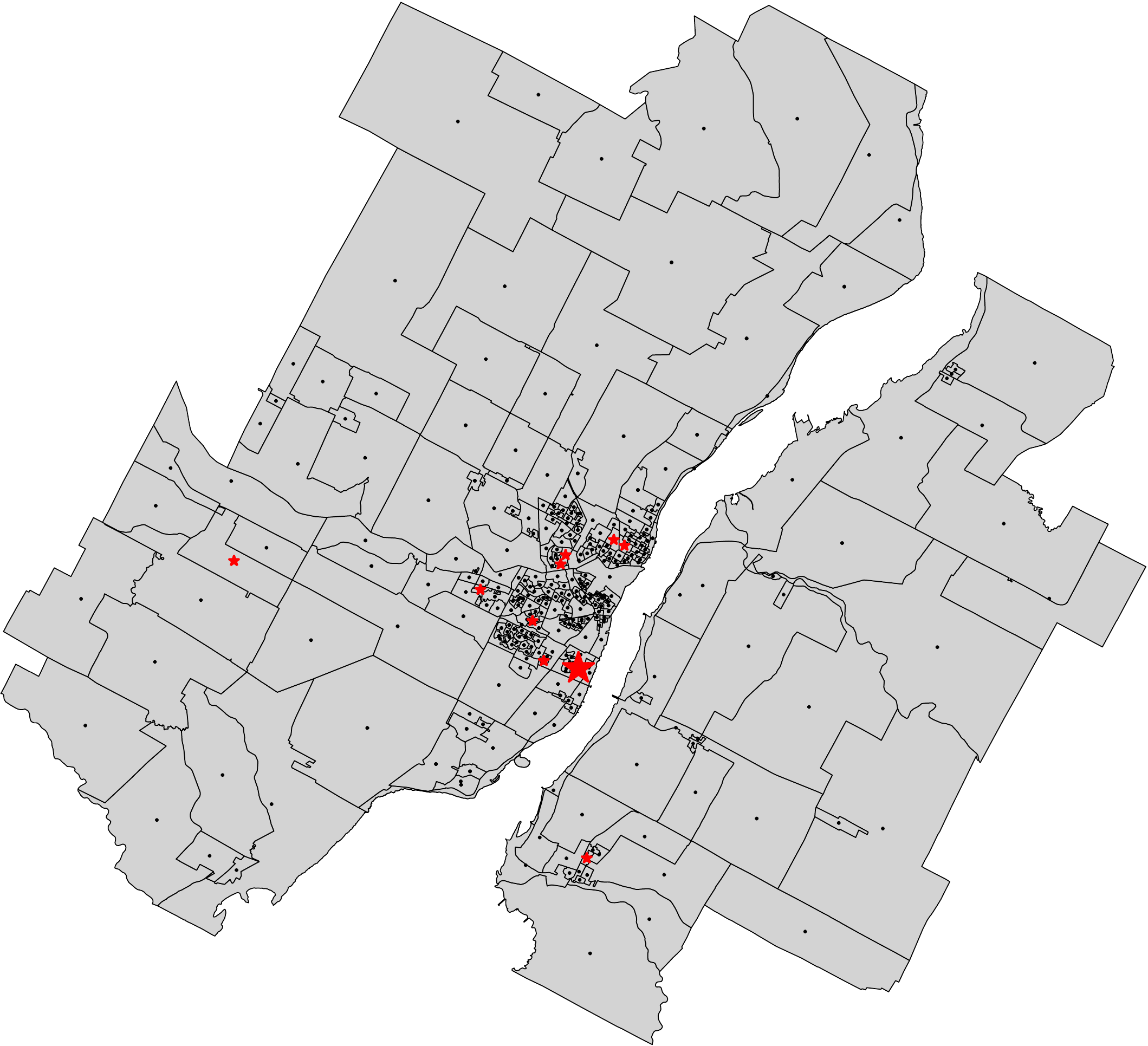}}
\caption{$\boldsymbol{\lambda}$ = \texttt{C}}\label{fig:TR_C}
\end{subfigure}\hfill
\begin{subfigure}[b]{0.49\textwidth}
\centering
\fbox{\includegraphics[scale = 0.4]{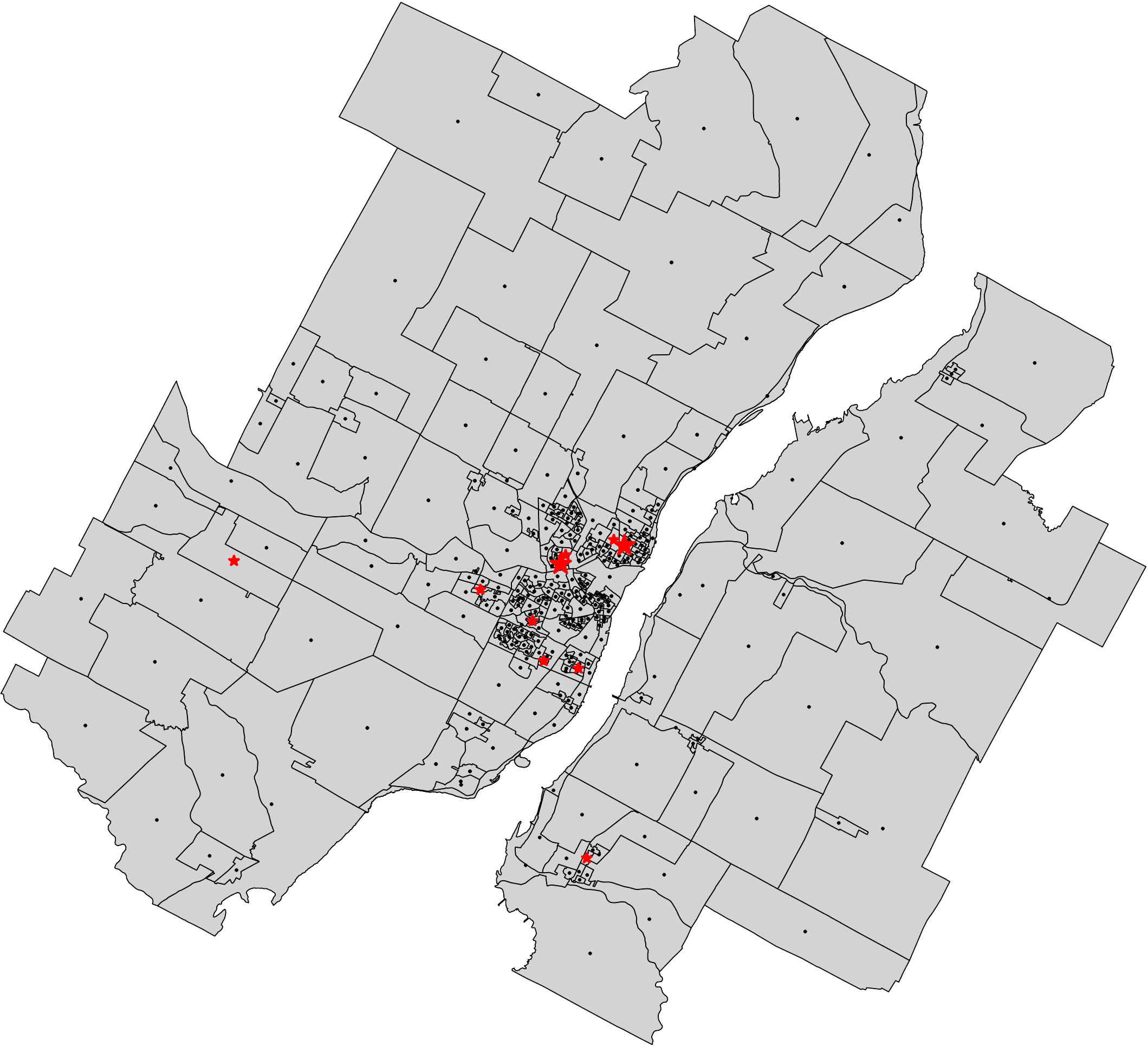}}
\caption{$\boldsymbol{\lambda}$ = \texttt{G}}
\label{fig:TR_G}
\end{subfigure}
\begin{subfigure}[b]{0.49\textwidth}
\centering
\fbox{\includegraphics[scale = 0.4]{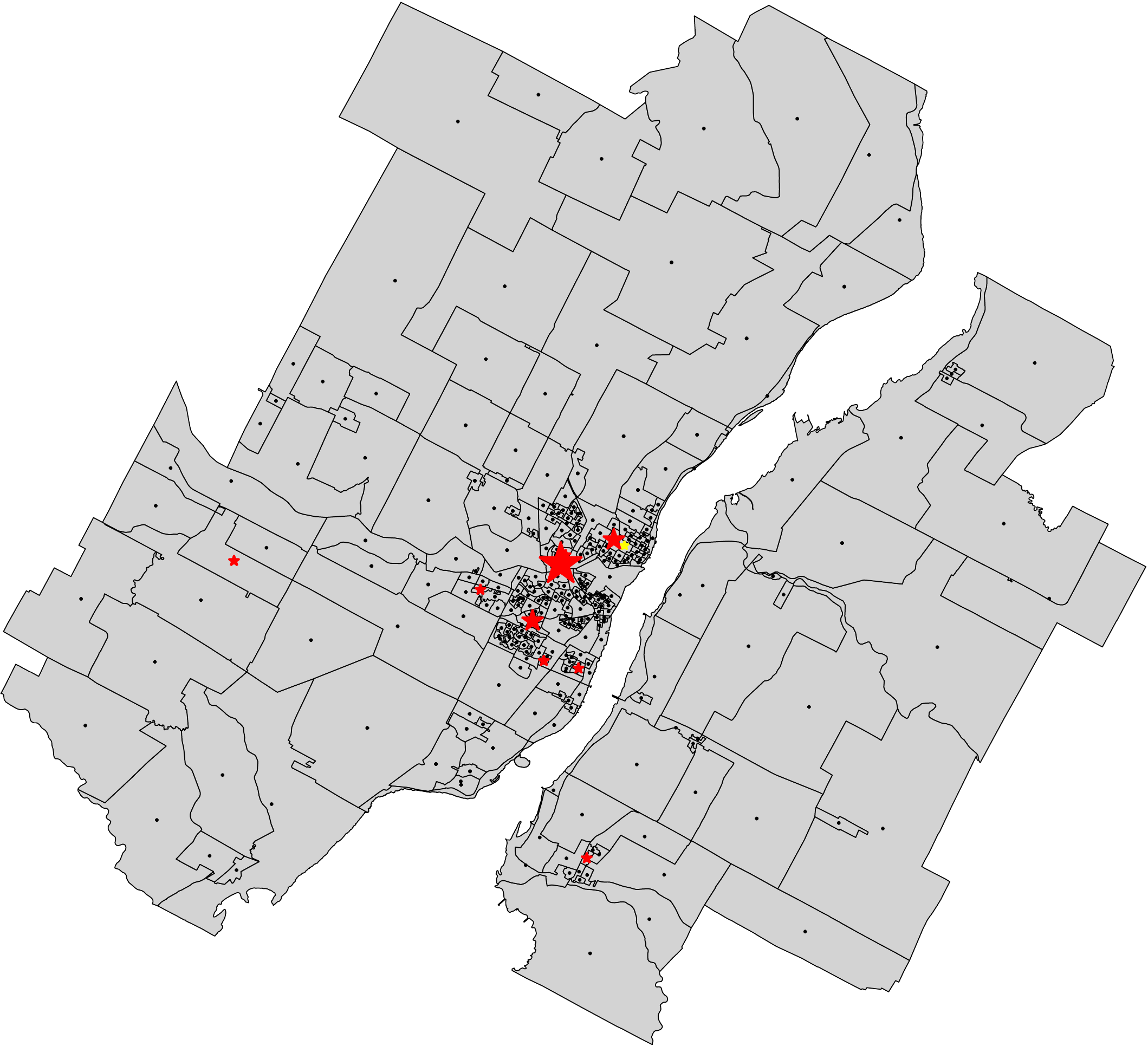}}
\caption{$\boldsymbol{\lambda}$ = \texttt{K}}
\label{fig:TR_K}
\end{subfigure}\hfill
\begin{subfigure}[b]{0.49\textwidth}
\centering
\fbox{\includegraphics[scale = 0.4]{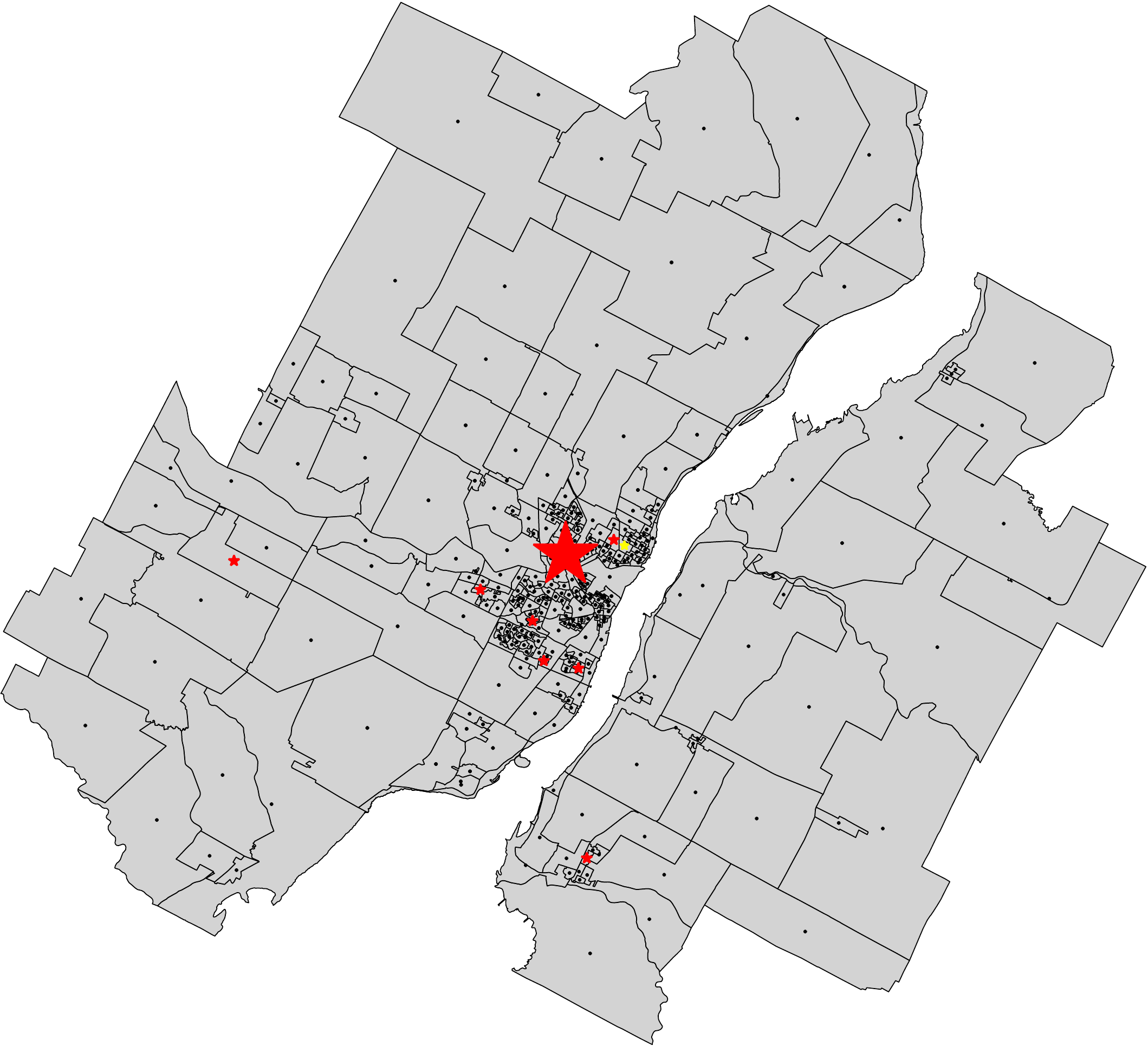}}
\caption{$\boldsymbol{\lambda}$ = \texttt{L}}
\label{fig:TR_L}
\end{subfigure}
\caption{\texttt{Real-Data} with $|\I| = 317, |\J| = 4, |\T| = 4, |\S| = 5,$ and $T = 4.5$. Example of the optimal placement of charging stations for electric vehicles in the city of Trois-Rivières, Québec, for different $\boldsymbol{\lambda}$-values. Red stars denote open stations, with the size of the star indicating the number of outlets, yellow stars represent closed stations, and black points indicate the client class for each region.}
\label{fig:ex_TR}
\end{figure}

\section{Conclusions} \label{sec:conclusiones}
In this paper, we provide a general framework for cooperative cover problems by considering a generalized version of the CMCLP where the total level of attraction is given as an OMf of the partial attractions, i.e., a weighted sum of ordered partial attractions of open facilities. We formulate the problem as a multiperiod stochastic problem with an embedded linear assignment problem characterizing the ordered median function. As a first solution approach, we present a MILP reformulation of the problem that can be solved using general-purpose solvers like Gurobi or CPLEX. We obtain a tight and compact model by deriving several sets of valid inequalities and some preprocessing techniques for particular values of the vector of weights of the OMf. Our second solution method is based on the well-known Benders Decomposition for MILPs, where we project out the assignment problem for each customer, scenario and time period. For this setting, we are able to derive an ad-hoc algorithm to include the Benders cuts in an effective and numerically more accurate manner. We test and compare all the methods by means of an extensive battery of computational experiments, and we show the variability in the location solutions for different $\boldsymbol{\lambda}$-weights in the case study proposed by \citet{anjos2022arxiv}, which deals with the placement of charging stations for electric vehicles in the city of Trois-Rivières, Québec (Canada). In view of the results obtained for the largest and more challenging instances, our exact approaches can be complemented with the development of tailored algorithms and efficient methods such as heuristics that take into account the cooperative decision rule of the customers. 

Further research on the topic includes, among others, the consideration of capacity constraints in the facilities that depend, for instance, on the type of facility installed. From a modeling point of view, the problem would be seen as a cooperative location-allocation problem. Another possible extension of the OCMCLP consists in robustifying the ordered median function problem associated to the total level of attraction. Indeed, since not only the partial attractions of the customers are uncertain, but the actual $\boldsymbol{\lambda}$-weights as well, we can consider this problem with variable $\boldsymbol{\lambda}$-weights that meet certain conditions associated to the knowledge of the customer, and optimize the total level of attraction in the worst-case. Considering non-monotone or negative $\boldsymbol{\lambda}$-weights are other extensions of the problem that may have applications in settings where the total level of attraction meets different requirements. For instance, \cite{marianov2012} discuss the optimal locations of multiple television transmitters. The transmitters are allowed to be constructive (cooperating) as well as destructive (interfering), and in the latter case the weights are non-monotone.

Finally, the study of the OCMCLP in the plane and in networks are extensions of the problem with a wide range of applications. This would be the starting point for a systematic study of ordered cooperative cover models.

\section*{Acknowledgements} 

This work was supported in part by the European Research Council (ERC) under the EU Horizon 2020 research and innovation program (grant agreement No. 755705), in part by the Spanish Ministry of Science and Innovation (AEI/10.13039/501100011033) through project PID2020-115460GB-I00, and AEI grant number RED2022-134149-T (Thematic Network: Location Science and Related Problems). 
C.\ Dom\'inguez and R.\ G\'azquez are also financially supported through the Research Program for Young Talented Reseachers of the University of M\'alaga under Project B1-2022\_37. 
Finally, the authors thankfully acknowledge the computer resources, technical expertise, and assistance provided by the SCBI (Supercomputing and Bioinformatics) center of the University of M\'alaga.

\bibliographystyle{elsarticle-harv} 
\bibliography{00_references}
	
\newpage
\appendix

\renewcommand{\thesection}{Appendix \Alph{section}}
\section{Proof of Proposition \ref{prop_linearizacion}}
In a first step towards obtaining a linear model, we start with constraint \eqref{SingleLevelNoLineal_rest_z0}. For fixed $i$, $t$, $s$ and partial attractions $u^{ts}_{ij}$, consider the feasible region for $(z^{ts}_i, \sigma^{ts}_{ijr})$  given by constraints \eqref{SingleLevelNoLineal_rest_z0}-\eqref{SingleLevelNoLineal_zcontinuas}:
\begin{align*}
W(i,t,s) := & \left\{(\boldsymbol{\sigma}_i^{ts},z_i^{ts}) \in \mathbb{R}_+^{|\J|\times |\J|} \times [0,1]: \left(T_i^{ts}-\sum_{j \in \J} \sum_{r \in \J}\lambda_{ir} u_{ij}^{ts}  \sigma_{ijr}^{ts}\right)z_i^{ts} \le 0, \right. \\
& \left. 0 \le \sum_{j \in \J} \sigma_{ijr}^{ts} \le 1\ \forall  r \in \J, \:  0 \le \sum_{r\in \J} \sigma_{ijr}^{ts} \le 1\ \forall j \in \J \right\},  
\end{align*}
\noindent where the value of $U^{ts}_i$ has been replaced in \eqref{SingleLevelNoLineal_rest_z0} using \eqref{SingleLevelNoLineal_U} and is expressed as a function of $\boldsymbol{\sigma}$.  
Defining sets $W^0$ and $W^1$ in the following manner:
\begin{align*}
W^0(i,t,s) := &  \left\{(\boldsymbol{\sigma}_i^{ts},z_i^{ts}) \in \mathbb{R}_+^{|\J|\times |\J|+1}: \boldsymbol{\sigma}_i^{ts}=\boldsymbol{0}, z_i^{ts}=0\right\}, \\
W^1(i,t,s) := & \left\{(\boldsymbol{\sigma}_i^{ts},z_i^{ts}) \in \mathbb{R}_+^{|\J|\times |\J|+1}: g(\boldsymbol{\sigma}_i^{ts}):=T_i^{ts}-\sum_{j \in \J} \sum_{r \in \J}\lambda_{ir} u_{ij}^{ts}  \sigma_{ijr}^{ts} \le 0, \right. \\
& \left. 0 \le \sum_{j \in \J} \sigma_{ijr}^{ts} \le 1\ \forall  r \in \J, \:  0 \le \sum_{r\in \J} \sigma_{ijr}^{ts} \le 1\ \forall j \in \J, z_i^{ts}=1\right\}, 
\end{align*}
\noindent  we have that $W \supset W^0 \cup W^1$, where $z_i^{ts} \in \{0,1\}$ can be viewed as an indicator variable. 
The inclusion is strict because when $z^{ts}_i=0$ the assignment is irrelevant, so we only keep the feasible assignment $\boldsymbol{\sigma}= \boldsymbol{0}$ and reject the rest of spurious solutions. On the other hand, for $z^{ts}_i=1$ the assignment $\boldsymbol{\sigma}$ needs to satisfy $g(\boldsymbol{\sigma})$ in order to be feasible for \eqref{SingleLevelNoLineal}. In fact, we define $g(\boldsymbol{\sigma})$ to emphasize that this constraint only needs to be satisfied for $z=1$, and to keep a notation consistent with \citet{gunluk2012}. Set $W^0$ is a point and $W^1$ is convex and bounded, so the convex hull of $W^0 \cup W^1$ can be characterized applying a perspective transformation \citep[see][]{gunluk2012}. Using Lemma 3.1 and Corollary 3.1 from this paper, we obtain that $conv\left(W^0\cup W^1\right) = \rm{closure}(W^-)$, with
\begin{align*}
    W^-(i,t,s) = & \left\{  (\boldsymbol{\sigma}_i^{ts},z_i^{ts}) \in \mathbb{R}_+^{|\J|\times |\J|+1}:  z^{ts}_ig(\frac{\boldsymbol{\sigma}_i^{ts}}{z^{ts}_i}) = T_i^{ts}z^{ts}_i-\sum_{j \in \J} \sum_{r \in \J}\lambda_{ir} u_{ij}^{ts}  \sigma_{ijr}^{ts} \le 0, \right. \\
    &  \left. 0 \le \sum_{j \in \J} \sigma_{ijr}^{ts} \le z_i^{ts}\ \forall  r \in \J,   0 \le \sum_{r\in \J} \sigma_{ijr}^{ts} \le z_i^{ts}\ \forall j \in \J, 0 < z^{ts}_i \le 1  \right\}. 
\end{align*}

Next, to linearize the remaining bilinear terms $u_{ij}^{ts}  \sigma_{ijr}^{ts}$ we exploit the fact that $\sigma$ are binary. Although they are stated as linear in $W^-(i,t,s)$, w.l.o.g.\ we can restrict them back to take values in \{0,1\} in order to linearize the product of a binary variable and a continuous bounded variable by a continuous variable in the manner of \cite{mccormick1976}. However, after this linearization we can no longer relax the integrality constraints on $\sigma$. Thus, defining a new set of variables $w_{ijr}^{ts} := u_{ij}^{ts}\sigma_{ijr}^{ts}$, $\forall j,r \in \J$, the linearization of constraint $T_i^{ts}z^{ts}_i \le \sum_{j \in \J} \sum_{r \in \J}\lambda_{ir} u_{ij}^{ts}  \sigma_{ijr}^{ts}$ reads:
\begin{equation*} \label{rest_zu_linealizada}  
T_i^{ts} z^{ts}_i \le \sum_{j \in \J} \sum_{r \in \J}\lambda_{ir} w_{ijr}^{ts} , \quad \forall i \in \I, t\in \T, s\in \S, 
\end{equation*}
\noindent and the following sets of constraints need to be added to the model:
\begin{subequations}
\begin{align}
 &  w_{ijr}^{ts} \le M^{ts}_{ij}\sigma_{ijr}^{ts}, \quad \forall i \in \I, t\in \T, s\in \S, j,r \in \J, \label{uuu_cotasuperiorz2}\\
 &  w_{ijr}^{ts} \le u_{ij}^{ts}, \quad \forall i \in \I, t\in \T, s\in \S, j,r \in \J,  \label{uuu_cotasuperiorw2} \\
 & w_{ijr}^{ts} \ge u_{ij}^{ts} - M^{ts}_{ij}(1-\sigma_{ijr}^{ts}), \quad \forall i \in \I, t\in \T, s\in \S, j,r \in \J,  \label{uuu_cotainferiorw2} \\
 & w_{ijr}^{ts} \ge 0, \quad \forall i \in \I, t\in \T, s\in \S, j,r \in \J,  \label{uuu_cotainferior0}
\end{align}
\end{subequations}
\noindent where for each $i \in \I$, $t\in \T$, $s\in \S$, each Big-M constant is an upper bound on the value of $u^{ts}_{ij}$, $M^{ts}_{ij}:= a_{ij|\K_j|}^{ts}$. Since we maximize on $z$ (and hence on $w$), we can omit the third set of constraints \eqref{uuu_cotainferiorw2} and still obtain a valid model. %The resulting MILP is: %Note that, due to the linearization, the integrality constraints on the assignment variables $\sigma$ can no longer be relaxed. %Constraints \eqref{rest_ulinear_cotasuperiorz2} guarantee that $w_{ijr}^{ts}=0$ if $\sigma_{ijr}^{ts} = 0$. And when $\sigma_{ijr}^{ts}=1$, then the corresponding constraint from set \eqref{rest_ulinear_cotasuperiorw2} ensures $w_{ij}^{ts}\le u_{i\bar{r}}^{ts}$.
\section{Proof of Theorem \ref{tma:AlgoritmoBenders}} \label{app:tmaAlgoritmoBenders}
All we have to show is that $(\sigma^*)$ is feasible for the primal, $({\gamma}^*, \delta^*)$ is feasible for the dual, the optimal value of both problems is the same and complementary slackness holds. 

\textbf{Primal feasibility.} Constraints \eqref{SUB_sumasigmajk} are satisfied because, for a fixed $r\in \J$, $\sum_{j \in \J} \sum_{k \in \K_j}\sigma_{jkr} = \sigma^*_{\tau^{-1}(r)\bar{k}_{\tau^{-1}(r)}r} = 1$. Constraints \eqref{SUB_sumasigmar} hold because $\sigma^*_{jkr}=1 \Rightarrow k = \bar{k}_j$ by definition in Algorithm \ref{alg:BendersPrimal}. And constraints \eqref{SUB_sumasigmark} are satisfied because \eqref{SUB_sumasigmar} are. %\tau$ is a bijection. Inequalities \eqref{SUB_ddvv_outlets} and \eqref{SUB_ddvv_outletsSumada} are trivially satisfied when $j\neq \tau(r)$. As for $j =\tau(r)$, we have $w^*_{jr} = \bar{u}_j = a_{j\bar{k}}$. For $k \leq \bar{k}$, the right-hand side of \eqref{SUB_ddvv_outlets} is equal to $a_{jk} + (a_{j\bar{k}} - a_{jk})\bar{x}_{j\bar{k}} = a_{j\bar{k}}$; for $k>\bar{k}$, the right-hand side of  \eqref{SUB_ddvv_outlets} is equal to $a_{jk}$, i.e., an upper bound on the value of $w_{jr}$. The same reasoning applies to the set \eqref{SUB_ddvv_outletsSumada} for $j =\tau(r)$.

\textbf{Dual feasibility.} For each $(j,r,k)$, we prove that \eqref{SUBDUAL_sigma} are satisfied depending on $k$. First of all, we prove that 
\begin{equation*}
    \gamma^*_r + \delta^*_j \ge \lambda_r\bar{u}_j \quad \forall j,r\in\J.
\end{equation*}
We distinguish three mutually exclusive cases:
\begin{enumerate}
    \item $j = \tau(r)$. In this case, 
    \begin{multline*}
        \gamma^*_r + \delta^*_j = \gamma_r + \delta_{\tau(r)} = \dsum_{r'=r}^{|\J|-1} \left(\lambda_{r'} - \lambda_{r'+1}\right) \bar{u}_{\tau(r')} + \lambda_{|\J|}\bar{u}_{\tau(|\J|)} +  \sum_{r' = r}^{|\J|-1} \lambda_{r'+1}\left( \bar{u}_{\tau(r')} - \bar{u}_{\tau(r'+1)}\right) = \\
        = \dsum_{r'=r}^{|\J|} \lambda_{r'}\bar{u}_{\tau(r')} + \sum_{r' = r}^{|\J|-1} \lambda_{r'+1} \bar{u}_{\tau(r')} - \dsum_{r'=r}^{|\J|-1} \lambda_{r'+1}\bar{u}_{\tau(r')}  - \sum_{r' = r+1}^{|\J|} \lambda_{r'}\bar{u}_{\tau(r')} = \lambda_r\bar{u}_j.
    \end{multline*}

    \item $j = \tau(r+n)$, with $n \in \mathbb{N}^+$. 
    \begin{multline*}
        \gamma^*_r + \delta^*_j = \dsum_{r'=r}^{|\J|} \lambda_{r'}\bar{u}_{\tau(r')} - \dsum_{r'=r}^{|\J|-1} \lambda_{r'+1}\bar{u}_{\tau(r')} + \sum_{r' = r+n}^{|\J|-1} \lambda_{r'+1} \bar{u}_{\tau(r')} - \sum_{r' = r+n+1}^{|\J|} \lambda_{r'}\bar{u}_{\tau(r')} = \\
        = \sum_{r'=r}^{r+n-1} (\lambda_{r'}-\lambda_{r'+1})\bar{u}_{\tau(r')} + \lambda_{r+n}\bar{u}_{\tau(r+n)} \ge \sum_{r'=r}^{r+n-2} (\lambda_{r'}-\lambda_{r'+1})\bar{u}_{\tau(r')} + \lambda_{r+n-1}\bar{u}_{\tau(r+n)} \ge \dots \ge \lambda_r\bar{u}_{\tau(r+n)},
    \end{multline*}
\noindent where we apply several times the inequality  $\lambda_r\bar{u}_{\tau(r)}  + \lambda_{r+1}\bar{u}_{\tau(r+1)} \ge \lambda_{r+1}\bar{u}_{\tau(r)} + \lambda_r\bar{u}_{\tau(r+1)}$, which holds because 
$(\lambda_r-\lambda_{r+1})(\bar{u}_{\tau(r)}-\bar{u}_{\tau(r+1)}) \ge 0$ by definition of $\lambda$ and $\tau$.

    \item $j = \tau(r-n)$, with $n \in \mathbb{N}^+$.
    \begin{multline*}
        \gamma^*_r + \delta^*_j = \dsum_{r'=r}^{|\J|} \lambda_{r'}\bar{u}_{\tau(r')} - \dsum_{r'=r}^{|\J|-1} \lambda_{r'+1}\bar{u}_{\tau(r')} + \sum_{r' = r-n}^{|\J|-1} \lambda_{r'+1} \bar{u}_{\tau(r')} - \sum_{r' = r-n+1}^{|\J|} \lambda_{r'}\bar{u}_{\tau(r')} = \\
        = \sum_{r' = r-n}^{r-1} \lambda_{r'+1} \bar{u}_{\tau(r')} - \sum_{r' = r-n}^{r-2} \lambda_{r'+1}\bar{u}_{\tau(r'+1)} = %\sum_{r' = r-n}^{r-2} \lambda_{r'+1} (\bar{u}_{\tau(r')} -\bar{u}_{\tau(r'+1)}) + \lambda_r\bar{u}_{\tau(r-1)} =
        \sum_{r' = r-n}^{r-3} \lambda_{r'+1} (\bar{u}_{\tau(r')} -\bar{u}_{\tau(r'+1)}) + \lambda_r\bar{u}_{\tau(r-2)} \ge \dots \ge \lambda_r\bar{u}_{\tau(r-n)}
        \end{multline*}
\noindent where we apply several times the inequality  $\lambda_r\bar{u}_{\tau(r-1)}  + \lambda_{r-1}\bar{u}_{\tau(r-2)} \ge \lambda_{r-1}\bar{u}_{\tau(r-1)} + \lambda_r\bar{u}_{\tau(r-2)}$, which holds because 
$(\lambda_{r-1}-\lambda_r)(\bar{u}_{\tau(r-2)}-\bar{u}_{\tau(r-1)}) \ge 0$ by definition of $\lambda$ and $\tau$.
\end{enumerate}

This reasoning proves that \eqref{SUBDUAL_sigma} hold $\forall j,r\in \J$, $\forall k\le \bar{k}_j$. To finish, it suffices to prove that for $k > \bar{k}_j$, $\eta^*_{jk} = \max_{r\in \J} \left\{\lambda_ra_{jk} - \gamma^*_r - \delta^*_j \right\} \ge 0$, i.e., that $\max_{r\in \J} \left\{\lambda_ra_{jk} - \gamma^*_r - \delta^*_j \right\}$ attains its maximum value when $r=r^*$ defined in Algorithm \ref{alg:BendersDual}.

Since $\delta^*_j$ does not depend on $r$, let us define $W(r) := \left\{\lambda_ra_{jk} - \gamma^*_r\right\}$ and study the variation of $W(r)$ when $r$ increases:
\begin{equation}
    W(r+1)-W(r) = (\lambda_r-\lambda_{r+1})(\bar{u}_{\tau(r)} - a_{jk}).
\end{equation}
Given that $\lambda_r-\lambda_{r+1} \ge 0$ $\forall r$, $W(r+1)-W(r) \ge 0$ if and only if $\bar{u}_{\tau(r)} - a_{jk}\ge 0$. When $r$ increases, $a_{jk}$ is fixed and $\bar{u}_{\tau(r)}$ decreases. Therefore, $W(r)$ increases while $\bar{u}_{\tau(r)} -a_{jk} \ge 0$ and then decreases, so the maximum is obtained for $r=r^*_{jk}$.

\textbf{Strong Duality.}
The primal optimal value is $\sum_{j \in \J}\sum_{r \in \J} \sum_{k \in \K_j}\lambda_r a_{jk} \sigma^*_{jkr} =$ $\sum_{j \in \J} \sigma^*_{j\bar{k}_j\tau^{-1}(j)} =$ $\sum_{r\in \J} \lambda_r\bar{u}_{\tau(r)}$. The dual optimal value is $\sum_{r \in \J} \gamma^*_r + \sum_{j\in \J} \delta^*_j + \sum_{j\in \J} \sum_{k \in \K_j} x_{jk} \eta^*_{jk} = \sum_{r \in \J} \gamma^*_r + \sum_{j\in \J} \delta^*_j = \sum_{r\in \J} (\gamma^*_r + \delta^*_{\tau(r)}) = \sum_{r\in \J} \lambda_r\bar{u}_{\tau(r)}$, as proven in Case 1.\ of \textbf{Dual Feasibility}.

\section{Additional results from the synthetic data set} \label{app:syn}

\begin{table}[H]
\begin{center}
\small
    \begin{tabular}{ccccrrrrrrrrr}
    \toprule
$|\S|$ & $|\I|$ & $\boldsymbol{\lambda}$ & $|\J|$ & \multicolumn{3}{c}{\texttt{Time[s] (\#Solved)}} &  \multicolumn{3}{c}{\texttt{MIPGap[\%]}} & \multicolumn{3}{c}{\texttt{FGap[\%]}} \\
\cmidrule(lr){5-7}\cmidrule(lr){8-10}\cmidrule(lr){11-13}
 &  &  & \texttt{SL} & \texttt{VI} & \texttt{B} & \texttt{SL} & \texttt{VI} & \texttt{B} & \texttt{SL} & \texttt{VI} & \texttt{B} \\
 \hline
5 & 20 & \texttt{C} & 10 & 0.2 (30) & 3.4 (30) & 1.3 (30) & 0.0 & 0.0 & 0.0 & 0.0 & 0.0 & 0.0 \\
     & & \texttt{G} & 10 & 655.6 (20) & 267 (28) & 46.1 (30) & 34.0 & 9.0 & 0.0 & 30.2 & 8.6 & 0.0 \\
     & & \texttt{K} & 10 & 402.0 (26) & 36.3 (30) & 6.5 (30) & 22.6 & 0.0 & 0.0 & 22.4 & 0.0 & 0.0 \\
     & & \texttt{L} & 10 & 270.0 (27) & 100.2 (30) & 22.7 (30) & 5.3 & 0.0 & 0.0 & 5.3 & 0.0 & 0.0 \\
& 40 & \texttt{C} & 10 & 0.7 (30) & 6.2 (30) & 3.1 (30) & 0.0 & 0.0 & 0.0 & 0.0 & 0.0 & 0.0 \\
             & &  & 30 & 5.3 (30) & 58.4 (25) & 40.3 (27) & 0.0 & 29.7 & 5.0 & 0.0 & 29.7 & 5.0 \\
   & & \texttt{G} & 10 & 2709.1 (6) & 746.6 (18) & 376.7 (27) & 50.9 & 29.0 & 6.2 & 47.9 & 29.0 & 6.2 \\
             & &  & 30 & \texttt{TL} (0) & \texttt{TL} (0) & 2109.2 (3) & 114.2 & 109.2 & 64.7 & 96.0 & 45.6 & 64.7 \\
   & & \texttt{K} & 10 & 1346.6 (14) & 144.8 (30) & 62.7 (29) & 12.9 &  & 14.9 & 12.9 &  & 14.9 \\
             & &  & 30 & \texttt{TL} (0) & 2013.9 (6) & 376 (20) & 40.1 & 18.4 & 12.8 & 35.8 & 18.2 & 12.1 \\
   & & \texttt{L} & 10 & 1511.5 (15) & 519.7 (26) & 139.5 (30) & 26.1 & 18.5 & 0.0 & 24.1 & 18.5 & 0.0 \\
             & &  & 30 & \texttt{TL} (0) & \texttt{TL} (0) & 838.5 (9) & 79.7 & 51.1 & 22.6 & 70.8 & 49.5 & 21.0 \\
10 & 20 & \texttt{C} & 10 & 0.6 (30) & 6.8 (30) & 3.3 (30) & 0.0 & 0.0 & 0.0 & 0.0 & 0.0 & 0.0 \\
   & & \texttt{G} & 10 & 2561.3 (7) & 889.8 (20) & 357.2 (27) & 43.7 & 24.1 & 2.7 & 43.7 & 24.1 & 2.7 \\
   & & \texttt{K} & 10 & 1267.2 (15) & 138.9 (30) & 35.3 (29) & 5.8 & 0.0 & 8.8 & 5.8 & 0.0 & 8.8 \\
   & & \texttt{L} & 10 & 1108.5 (17) & 327.8 (25) & 105.3 (30) & 14.4 & 23.8 & 0.0 & 14.4 & 21.3 & 0.0 \\
& 40 & \texttt{C} & 10 & 1.3 (30) & 19.5 (30) & 9.8 (30) & 0.0 & 0.0 & 0.0 & 0.0 & 0.0 & 0.0 \\
             & &  & 30 & 11.1 (30) & 241.2 (22) & 113.2 (24) & 0.0 & 35.6 & 6.6 & 0.0 & 35.6 & 6.6 \\
 & & \texttt{G} & 10 & \texttt{TL} (0) & 2074.5 (11) & 685.8 (13) & 57.0 & 33.9 & 16.8 & 54.1 & 33.9 & 16.8 \\
 & &  & 30 & \texttt{TL} (0) & \texttt{TL} (0) & \texttt{TL} (0) & 151.7 & 183.1 & 56.7 & 104.2 & 45.7 & 56.7 \\
 & & \texttt{K} & 10 & 2306.3 (2) & 498.7 (24) & 173.5 (24) & 8.5 & 5.6 & 8.5 & 8.5 & 5.6 & 8.5 \\
 & &  & 30 & \texttt{TL} (0) & \texttt{TL} (0) & 661.8 (15) & 46.1 & 21.0 & 10.9 & 38.1 & 17.3 & 10.9 \\
 & & \texttt{L} & 10 & 2741.8 (6) & 1324.3 (18) & 676.6 (22) & 28.2 & 34.5 & 8.3 & 28.2 & 33.8 & 8.3 \\
 & &  & 30 & \texttt{TL} (0) & \texttt{TL} (0) & 2896.1 (6) & 104.1 & 69.9 & 37.4 & 78.2 & 28.3 & 37.4 \\
    \bottomrule
\end{tabular}
\caption{\texttt{Syn-Data}. The total number of instances per row is 30. Time limit equal to 3600 seconds. Median computational time of solved instances in seconds (\texttt{Time[s]}), number of solved instances (\texttt{\#Solved}), median Gap between incumbent solution and bound (\texttt{MIPGap}), and median Gap between best solution found among the models and bound (\texttt{FGap}).} 
\label{t:sce5}
\end{center}
\end{table}

\section{Additional results from the case study} \label{app:real}

\begin{table}[H]
\begin{center}
    \begin{tabular}{ccrrrr}
    \toprule
$|\J|$ & $|\I|$ & \multicolumn{2}{c}{$T = 4.5$} & \multicolumn{2}{c}{$T = 9$} \\ \cmidrule(lr){3-4}\cmidrule(lr){5-6}
 &  & \texttt{Time[s]} \texttt{(\# Solved)} & \texttt{MIPGap[\%]} & \texttt{Time[s]} \texttt{(\# Solved)} & \texttt{MIPGap[\%]} \\
\hline
10 & 100 & 1687.9 (79) & 0.4 & 6.3 (22) & 1.5 \\
 & 200 & 50.8 (20) & 1.0 & 18.2 (20) & 3.2 \\
 & 317 & 114.2 (20) & 1.4 & 34.3 (20) & 4.0 \\
20 & 100 & 55.2 (20) & 2.4 & 25.8 (20) & 8.0 \\
 & 200 & 545.1 (20) & 4.0 & 107.6 (20) & 9.8 \\
 & 317 & 7691.2 (18) & 4.4 & 243.2 (20) & 10.1 \\
30 & 100 & 156.6 (20) & 3.7 & 74.9 (20) & 10.8 \\
 & 200 & 17463.3 (11) & 5.4 & 1341.4 (20) & 13.3 \\
 & 317 & \texttt{TL} (0) & 6.0 & 10155.2 (18) & 14.2 \\
    \bottomrule
\end{tabular}
\caption{\texttt{Real-Data} with $|\T| = 10$. The total number of instances per row is 80. Time limit equal to 28800 seconds (8 hours). Median computational time of solved instances in seconds (\texttt{Time[s]}), number of solved instances (\texttt{\#Solved}), and median Gap between the incumbent solution and the best bound (\texttt{MIPGap}).} 
\label{t:anjos-computational2}
\end{center}
\end{table}

\begin{table}[H]
\begin{center}
    \begin{tabular}{ccrrrrrr}
    \toprule
$|\J|$ & $|\I|$ & \multicolumn{3}{c}{$T = 4.5$} & \multicolumn{3}{c}{$T = 9$} \\ \cmidrule(lr){3-5}\cmidrule(lr){6-8}
 &  & \multicolumn{1}{c}{\texttt{G}} & \multicolumn{1}{c}{\texttt{K}} & \multicolumn{1}{c}{\texttt{L}} & \multicolumn{1}{c}{\texttt{G}} & \multicolumn{1}{c}{\texttt{K}} & \multicolumn{1}{c}{\texttt{L}} \\
 \hline
10 & 100 & 0.7 & 0.7 & 0.7 & 1.4 & 1.4 & 1.5 \\
   & 200 & 0.5 & 0.5 & 0.5 & 0.8 & 0.8 & 0.8 \\
   & 317 & 0.3 & 0.4 & 0.4 & 0.7 & 0.7 & 0.7 \\
20 & 100 & 0.7 & 1.0 & 0.8 & 1.9 & 2.7 & 2.2 \\
   & 200 & 0.4 & 0.6 & 0.5 & 0.7 & 1.3 & 0.8 \\
   & 317 & 0.3 & 0.4 & 0.3 & 0.4 & 0.9 & 0.7 \\
30 & 100 & 0.5 & 1.0 & 0.7 & 1.2 & 2.6 & 1.5 \\
   & 200 & 0.2 & 0.6 & 0.3 & 0.3 & 1.0 & 0.5 \\
   & 317 &  -- & --  & -- & 0.1 & 0.6 & 0.4 \\
    \bottomrule
\end{tabular}
\caption{\texttt{Real-Data} with $|\T| = 10$. The total number of instances per row is 20. Time limit equal to 28800 seconds (8 hours). Average of the regret between \texttt{C} and other $\boldsymbol{\lambda}$-values.} 
\label{t:deviation_t10}
\end{center}
\end{table}

\end{document}